\let\csname equation*\endcsname\relax
\let\csname endequation*\endcsname\relax
\numberwithin{equation}{section}
\numberwithin{figure}{section}
\newcommand\CE{\mathcal{E}}
\newcommand\reals{\mathbb{R}}
\newcommand\E{\mathbb{E}}
\newcommand\Prob{\mathbb{P}}
\newcommand\rmd{\mathrm{d}}
\newcommand\rmi{\mathrm{i}}
\newcommand\btilde{\tilde{b}}
\newtheorem{thm}{Theorem}[section]
\newtheorem{cor}[thm]{Corollary}
\newtheorem{lem}[thm]{Lemma}
\newtheorem{prop}[thm]{Proposition}
\newtheorem{defn}[thm]{Definition}
\newtheorem{aspt}[thm]{Assumption}
\begin{document}
\title{Simple Nonlinear Models with Rigorous Extreme Events and Heavy Tails }
\author{Andrew J  Majda and  Xin T Tong}
\begin{comment}
\address{Department of Mathematics, and Center for Atmosphere Ocean Science, Courant Institute of
Mathematical Sciences, New York University, 251 Mercer Street, New York, NY 10012, USA}
\date{\today}
\end{comment}
\maketitle
\begin{abstract}
Extreme events and the heavy tail distributions driven by them are  ubiquitous in various scientific, engineering and financial research. 
They are typically associated with stochastic instability caused by hidden unresolved processes.
Previous studies have shown that such instability can be modeled by a stochastic damping in conditional Gaussian models.
However, these results are mostly obtained through numerical experiments,  
while a rigorous understanding of the underlying mechanism is sorely lacking. 
This paper contributes to this issue by establishing a theoretical framework,
in which  the tail density of conditional Gaussian models can be rigorously determined.
In rough words, we show that if the stochastic damping takes negative values, the tail is polynomial;
if the stochastic damping is nonnegative but takes value zero at a point,  the tail is between exponential and Gaussian.
The proof is established by constructing a novel, product-type Lyapunov function, where a Feynman-Kac formula  is applied. 
The same framework also leads to a non-asymptotic large deviation bound for long-time averaging processes.
\end{abstract}
\section{Introduction}
With dramatic global climate change in recent years, extreme climate events, along with their destructive power, are observed more often than ever. Severe heatwaves reduce crop harvest, increase forest fire risk and sometimes lead to human casualties. Heavy downpours, as another extreme, can flood large areas and cause significant economic losses \cite{Easterling_etal}.  
Extreme events are also of great interest in engineering and financial research, because of the underlying risk. 
Rogue waves, seen as walls of waters of 10 meters high, can easily sink unprepared ships \cite{FS17}. 
Credit default of one bank can  lead to world-wide financial recession \cite{EMN12}. 
The capability to model, measure and predict these extreme events has never been so important \cite{HD14, CGHM14, tsay05}.  

Mathematically, extreme events can be viewed as strong anomalies seen in the time series of certain observables.
Collectively, they produce an exponential or even polynomial heavy tail in the observable's histogram. 
They often appear in complex nonlinear models that have stochastic instability.  
This instability typically comes as a combined effect of many hidden or unresolved processes \cite{CGHM14}.  
Examples of these hidden processes include cloud formation, precipitation and refined scale turbulence \cite{MFK08, MS09, KBM10, KMS13}.
For these processes, only limited direct observations are available.  
Accurate physical models of them are lacking, or require expensive computation.
A better modeling strategy is viewing them as stochastic processes, of which the parameters can be tuned to fit data statistically \cite{Majda_Wang}. 

The stochastic instability discussed above can be described by the following simple nonlinear model:
\begin{equation}
\label{sys:dyd}
\begin{gathered}
\rmd X_t=-b(u_t) X_t \rmd t+ \sigma_x \rmd W_t,\\
\rmd u_t=h(u_t)\rmd t+\rmd B_t.
\end{gathered}
\end{equation}
Here,   $X_t$ represents certain observables of a physical model, 
while its dynamics is affected by a hidden process $u_t$. Let $d_X$ and $d_u$ be the dimension of $X_t$ and $u_t$.
$W_t$ and $B_t$ are two independent Wiener processes of dimensions $d_X$ and $d_u$ respectively. 
For simplicity, we assume throughout the paper that $u_t$ is ergodic, 
and $\pi$ is its equilibrium distribution.

In \eqref{sys:dyd}, the stochastic instability is represented by the damping rate $b(u_t)\in\reals$. 
The dynamics of $X_t$ is unstable if  $b(u_t)$ is zero or negative in an interval of time, 
strong large spikes will appear in the trajectory of $X_t$ as a consequence, 
which we can interpret as extreme events. Note that this is a random event that takes place intermittently,
since it is triggered by the random realization of the process $u_t$.

One important feature of model \eqref{sys:dyd} is that the dynamics of $X_t$ is linear if $u_t$ is fixed. 
We can generalize the formulation in \eqref{sys:dyd} and maintain this feature. Consider
\begin{equation}
\label{sys:condGauss}
\begin{gathered}
\rmd X_t=-B(u_t)X_t  \rmd t +\Sigma_X \rmd W_t,\\
\rmd u_t=h(u_t)\rmd t+\rmd B_t,
\end{gathered}
\end{equation}
where $B(u)$ is a matrix valued function. This is known as a conditional Gaussian system \cite{LS01}. 
This formulation can be found in many nonlinear models, such as stochastic parameterization Kalman filter (SPEKF), Lagrangian floater, low order Madden Julian Oscillation model, and turbulent tracers \cite{CMT14, CMT14b, CMG14, CM18, CMT17, MT15}.  The conditional Gaussian structure can be exploited for efficient  computations. If we apply the vanilla Monte Carlo method to estimate the density of  $X_t$,  the necessary sample size is $e^{d_X}$, which is prohibitive when $d_X$ is large. But knowing that $X_t$ is conditionally Gaussian, it suffices to compute the conditional mean and covariance, of which the computational cost only scale cubically with $d_X$. This feature can be exploited for high dimensional  prediction and data assimilation \cite{CMG14, CMT14, CMT14b, CM18, CMT17}. 

Conditional Gaussian model is known to be a good tool for studying extreme events and heavy tail phenomena. 
In the  SPEKF model, an observable $x_t$ is driven by
\[
\rmd x_t= (-\gamma(t) +\rmi \omega) x_t  \rmd t+f(t) \rmd t+\sigma_x \rmd W_t,
\]
where $\gamma(t)$ and $f(t)$ are independent Ornstein-Uhlenbeck (OU) processes modeling the unobservable instability and forcing, and `$\rmi$' is the imaginary unit. This simple nonlinear model was first introduced in \cite{GHM10} for filtering multiscale turbulent signals with hidden instabilities,  and later used for filtering, prediction, parameter estimation in the presence of model error \cite{GM11, BGM12, Majda_Harlim_Gershgorin, CGHM14}. Another example is the turbulent passive tracer. Passive tracers are substance transported by a turbulence. 
They can reveal many important properties of the underlying turbulence and have important environmental impacts \cite{Neelin_etal, MG13, LMQ16}. Mathematically, given a turbulence velocity field $V$, the passive tracer density $T(x,t)$ follows an advection-diffusion equation:
\[
\partial_t T+V\cdot \nabla T=-\gamma_T T+\kappa \Delta T.
\]
This dynamic is linear conditioned on $V$, so $T$ can be interpreted as $X_t$ in \eqref{sys:condGauss}. 
And in order to have a good numerical representation of the 2-dimensional density field, $T$ needs to include hundreds or thousands of Fourier modes. In other words, $X_t$ is high dimension, and the aforementioned conditional Gaussian computational strategy is more efficient than  vanilla Monte Carlo. 
Former numerical experiments indicate that even with a simple zonal sweep  $V$, 
the passive tracers have extreme events and an exponential-like histogram. 
This is in accordance with the laboratory observations such as the classical Rayleigh-Bernard convection \cite{Cetal89, Gollub_etal} and readings from the atmosphere \cite{Neelin_etal}.
An earlier result of the authors \cite{MT15} has rigorously explained this phenomenon using a delicate phase resonance. 

Despite the extensive success in using conditional Gaussian models for extreme event research,
most findings are justified by numerical experiments. 
The only rigorous result \cite{MT15} focuses only on a specific passive tracer model.
There lacks a rigorous extreme event framework that applies to the general model \eqref{sys:dyd} or \eqref{sys:condGauss}.
This can be problematic, since extreme events are typically rare and can be very difficult to simulate or observe. Experimental data, therefore, can be inaccurate,  especially if the model contains many variables or has complicated nonlinearity. 

This paper intends to close this gap by giving concrete criteria that lead to provable heavy tails of $X_t$. As a result, when a model of type \eqref{sys:dyd} or \eqref{sys:condGauss} is available, we know apriori the tail density of  $\|X_t\|$. This will be extremely helpful to the stochastic modeling of extreme events, as it turns a nonparametric problem parametric. Furthermore, we can obtain lower and upper bounds of the shape parameters of these distributions, and in some simple cases, these bounds are sharp.  From the reverse perspective, when we only have data of $X_t$ and intend to fit it with a model, the criteria in this paper can be used as guidelines for the choice of the model.

%The model \eqref{sys:dyd} we consider here is a simplification of \eqref{sys:condGauss}, and corresponds to the case $B(u_t)=b(u_t)\mathbf{I}$. Never the less, the theoretical results can be applied to \eqref{sys:condGauss} by building surrogate functions $\bar{b}$ and $\underline{b}$. We allocate the details  to Section \ref{sec:condGauss}, and discuss only two examples of conditional Gaussian system here. 

\subsection{Main results in a simplified setting}
In order to give a quick idea of  our main result, consider an unforced SPEKF model with general damping \cite{CGHM14}:
 \begin{equation}
\label{eqn:2OU}
\begin{gathered}
\rmd X_t=-b(u_t) X_t \rmd t+ \rmd W_t,\\
\rmd u_t=-\gamma u_t \rmd t+ \rmd B_t.
\end{gathered}
\end{equation}
We assume $u_0$ follows the equilibrium distribution $\pi=\mathcal{N}(0,\tfrac{1}{2\gamma})$ and $X_0=0$. Our result indicates that the tail of $X_t$ is controlled by some simple properties of the damping function $b$:
\begin{thm}
\label{thm:simple}
In model \eqref{eqn:2OU}, suppose the damping on average is positive, that is
\[
\langle \pi, b\rangle:=\int  b(u) \pi(\rmd u)=\E b(u_t)>0,
\] and $b$ is Lipschitz, then $(X_t, u_t)\in \reals^{1+1}$  has a unique equilibrium distribution, under which the tail of $|X_t|$ is
\begin{enumerate}[i)]
\item polynomial, if $b$ can take negative values.
\item exponential,  if $b(u)$ is nonnegative and takes value $0$ in an interval.
\item between exponential and Gaussian, if $b$ is nonnegative, and takes value $0$ at a point. 
\item Gaussian,  if $b$ is bounded away from zero. 
\end{enumerate}
\end{thm}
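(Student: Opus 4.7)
The plan is to exploit the conditional Gaussian structure of \eqref{eqn:2OU} and convert the tail problem into a spectral problem for a Schrödinger-type operator in the slow variable $u$.  Conditionally on $\{u_r\}_{r\le t}$, $X_t$ is a centered Gaussian with variance $V_t=\int_0^t\exp(-2\int_s^t b(u_r)\rmd r)\rmd s$, so at stationarity $\E|X|^{2p}=C_p\,\E V_\infty^{\,p}$ and $\E\exp(\alpha X^2)=\E(1-2\alpha V_\infty)^{-1/2}$.  All tail quantities thus reduce to exponential functionals of $b(u_\cdot)$, which by Feynman--Kac are governed by the top eigenvalue $\Lambda(c)$ of the operator $\mathcal{L}_u+c\,b$, where $\mathcal{L}_u=-\gamma u\partial_u+\tfrac12\partial_u^2$ is the OU generator.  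The assumption $\langle\pi,b\rangle>0$ ensures $\Lambda'(0)>0$ so that $\Lambda(c)<0$ for small negative $c$; the window of $c$'s for which $\Lambda(c)<0$ will select the tail regime.

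The rigorous implementation is a product-type Lyapunov function $\phi(X,u)=V(X)\,g(u)$ with $V$ chosen to match the target tail.  A direct computation gives
\[
\mathcal{L}\phi=V(X)\bigl[\mathcal{L}_u g+q_V(u)g\bigr]+r_V(X,u)g,
\]
where $q_V(u)$ is essentially a multiple of $b(u)$: for $V=|X|^{2p}$ one has $q_V=-2p\,b$, and for $V=\exp(\alpha|X|^\beta)$ an analogous relation holds after absorbing sub-dominant $|X|^{2\beta-2}$ terms into $r_V$.  I take $g$ to be the strictly positive principal eigenfunction of $\mathcal{L}_u+q_V$, furnished by Perron--Frobenius applied to the Feynman--Kac semigroup $P^{q_V}_tf(u)=\E_u[f(u_t)e^{\int_0^t q_V(u_s)\rmd s}]$.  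When the corresponding eigenvalue is $<0$, the first bracket is $\le -\delta g$, the leftover $r_V$ is lower order at infinity, and one obtains the Foster--Lyapunov drift $\mathcal{L}\phi\le-\tfrac{\delta}{2}\phi+K$.  Integrating and applying Markov's inequality gives $\Prob(|X_t|\ge x)\le C/V(x)$ uniformly in $t$.

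Case analysis then reduces to locating the shape parameters for which $\Lambda<0$.  For case (iv), $b\ge\underline b>0$ allows $g\equiv 1$ and $V=e^{\alpha X^2}$ with $\alpha<\underline b$, giving a Gaussian tail.  For case (iii), the Gaussian choice fails near the isolated zero $u^*$ of $b$; instead, $V=e^{\alpha|X|^\beta}$ with $1<\beta<2$ depresses the positive potential enough that a WKB-type eigenfunction $g$ exists and is integrable against $\pi$ near $u^*$, giving a tail strictly between exponential and Gaussian, with $\beta$ determined by the order of vanishing of $b$ at $u^*$.  For case (ii), $b$ vanishes on an interval and no $\beta>1$ works; only $V=e^{\alpha|X|}$ produces an integrable eigenfunction, yielding an exponential tail whose rate is governed by the escape rate from the flat region.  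For case (i), $b$ takes negative values; the Donsker--Varadhan formula
\[
\Lambda(c)=\sup_{\nu}\Bigl[\int c\,b(u)\,\nu(\rmd u)-I(\nu\,|\,\pi)\Bigr]
\]
shows $\Lambda(-2p)$ crosses zero at a finite $p^*$, so $\E|X|^{2p}<\infty$ for $p<p^*$ but diverges for $p>p^*$: a genuine polynomial tail with exponent $2p^*$.

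The principal obstacle will be the construction and quantitative control of the eigenfunction $g$ on the non-compact space $\reals$ with an unbounded, merely Lipschitz potential $q_V$.  Standard Perron--Frobenius theory needs compactness of a resolvent, which I would obtain by combining Gaussian hypercontractivity of the OU semigroup with a priori tail bounds on $g$ derived from the Donsker--Varadhan variational principle; the same variational formula pinpoints the sharp threshold parameters in each case.  Matching lower bounds on the tail of $|X|$ come from a separate conditioning argument: restrict to trajectories of $u$ that spend a large window $[0,T]$ near the zero (or negativity set) of $b$.  The probability of this event is controlled by the OU large deviation rate function, and on this event $V_\infty$ is anomalously large, so the Gaussian conditional tail of $X_t$ saturates the upper bound and produces the matching lower bound in each of cases (i)--(iii).
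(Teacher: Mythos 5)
Your plan shares the paper's central device---a product Lyapunov function $\phi(x,u)=V(x)\,g(u)$---but the way you choose $g$ diverges from, and is materially harder to execute than, what the paper does. The paper sets $g(u)=\exp(q\,\theta(u))$ where $\theta$ solves the Poisson equation $\mathcal{L}\theta=b-\langle\pi,b\rangle$ via the explicit Feynman--Kac time integral $\theta(u)=-\int_0^\infty\bigl(\E^u b(u_t)-\langle\pi,b\rangle\bigr)\rmd t$; the only ingredient needed to control $g$ is that the OU process is asymptotically contractive in Wasserstein distance, which makes $\theta$ globally Lipschitz (Lemma~\ref{lem:linear}) without ever invoking a spectral gap, Perron--Frobenius, or compactness of a resolvent. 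Your route---take $g$ to be the principal eigenfunction of the Schr\"odinger-type operator on the $u$-line with potential $q_V$---would indeed locate the sharp threshold at $\Lambda(c)=0$, but on the non-compact line with a merely Lipschitz (hence unbounded) $b$ you must establish existence, strict positivity, and $\pi$-integrability of that eigenfunction. Your proposed remedy (hypercontractivity plus Donsker--Varadhan) is plausible in spirit but is not spelled out to the point where it could replace the explicit construction, and it is precisely this non-compactness obstruction that the paper's Poisson-equation/Wasserstein-contraction route is designed to bypass.

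A more concrete gap is in the reduction itself. For $V(x)=|x|^{2p}$ the effective potential is indeed $q_V(u)=-2p\,b(u)$, a function of $u$ alone, so your spectral picture makes sense there. But for $V(x)=\exp(\alpha|x|^\beta)$ the drift contribution to $\mathcal{L}\phi$ is $-\alpha\beta\,b(u)\,|x|^\beta\,V(x)\,g(u)$: the leading ``potential'' is jointly $(x,u)$-dependent, and you cannot absorb the $|x|^\beta$ factor into the remainder $r_V$ because it is the dominant term, not a lower-order one. There is therefore no clean eigenvalue problem in $u$ alone for the exponential or intermediate regimes. The paper does not attempt such a reduction: the sub-exponential upper bound is obtained by the pointwise inequality $e^x(\|\alpha\|-x)\le e^{\|\alpha\|-1}$ applied to $\exp\langle\alpha,X_t\rangle$ (Proposition~\ref{prop:upperbound}), and the lower bound in the between-exponential-and-Gaussian case comes from the hierarchy $g_1,\dots,g_m$ of Definition~\ref{defn:Am} and the quantitative bound $\E\exp\bigl(-2p\int_0^t b(u_s)\rmd s\bigr)\ge\exp(-2p^{1/2^m}Mt-O(\sqrt p))$ of Lemma~\ref{lem:weakdampgen}, not from a WKB eigenfunction. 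Relatedly, for case (iii) you claim to pin down an exact $\beta$ from the order of vanishing of $b$; the paper proves only a sub-exponential upper bound together with a strictly-heavier-than-Gaussian lower bound (via Proposition~\ref{prop:weakinter} and Corollary~\ref{cor:dissi}), and does not claim the sharp intermediate exponent. Finally, your conditioning argument for the lower tail (restrict to OU paths lingering near the zero set of $b$) is stated but not developed; to recover the paper's result it would have to reproduce the precise $p$-dependence of Lemma~\ref{lem:weakdampgen} that drives the $2-2^{-m}$ exponent, and that step is missing.
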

The difference between case ii) and iii) is quite subtle but important: in case ii), $b(u_t)$ takes value zero with positive amount of time, while in case iii), $b(u_t)$ can be close to zero, but takes value zero only at a singular set of times. We also emphasize that this simplified setting can be  generalized and include systems \eqref{sys:dyd} and \eqref{sys:condGauss}. The Lipschitz requirement of $b$ can be relaxed to include all the test functions discussed directly below. The general statements can be found in Theorems \ref{thm:unstable}, \ref{thm:exponential} and \ref{thm:Gaussiantail}. Proof is allocated in \ref{sec:appenduni}. 

As a quick verification of  Theorem \ref{thm:simple}, we conduct several simple numerical experiments. For each experiment, model \eqref{eqn:2OU} with $\gamma=2$ is simulated for an extensive length $T=10^6$. An implicit Euler scheme \cite{HMS06} is implemented for the $X_t$ part with a small time step $\Delta t=10^{-2}$, so the large anomalies come not as a result of numerical instability. The realization of the unobservable process $u_t$ is kept the same for comparison. We present the  trajectory of $|X_t|$ for $t\in [9,000,10,000]$ to demonstrate the extreme events. We also present the log-density plot based on the histogram of $T/\delta t=10^8$ data points. A Gaussian density with the same mean and variance is plotted as a reference.

In the first group of experiments, we consider damping functions that can take negative values. Following the example of unforced SPEKF model in \cite{CGHM14}, we test affine functions $b(u)=1+cu$, where $c=3,2,1$. The intercept $1$  is necessary for $\E b(u_t)>0$. The results are presented in Figure \ref{fig:unstable}. We can see clearly that the trajectories of $|X_t|$ are filled with strong intermittent extremal anomalies. And with the increment of $c$, the amplitudes of the extreme events grow exponentially. This can also be seen in the log-density plots, which all have a logarithmic tail profile, while the range increases with $c$. This indicates the tails are indeed polynomial like.  In particular, when $c=3$, the theoretical variance of $X_t$ is infinite, which we will find out in Section \ref{sec:bivariate}. The sample variance exceeds $10^5$ because of the extremal anomalies. We do not plot the Gaussian density reference as it is invalid.  

\begin{figure}
 \hspace*{-2cm}
\includegraphics[scale=0.4]{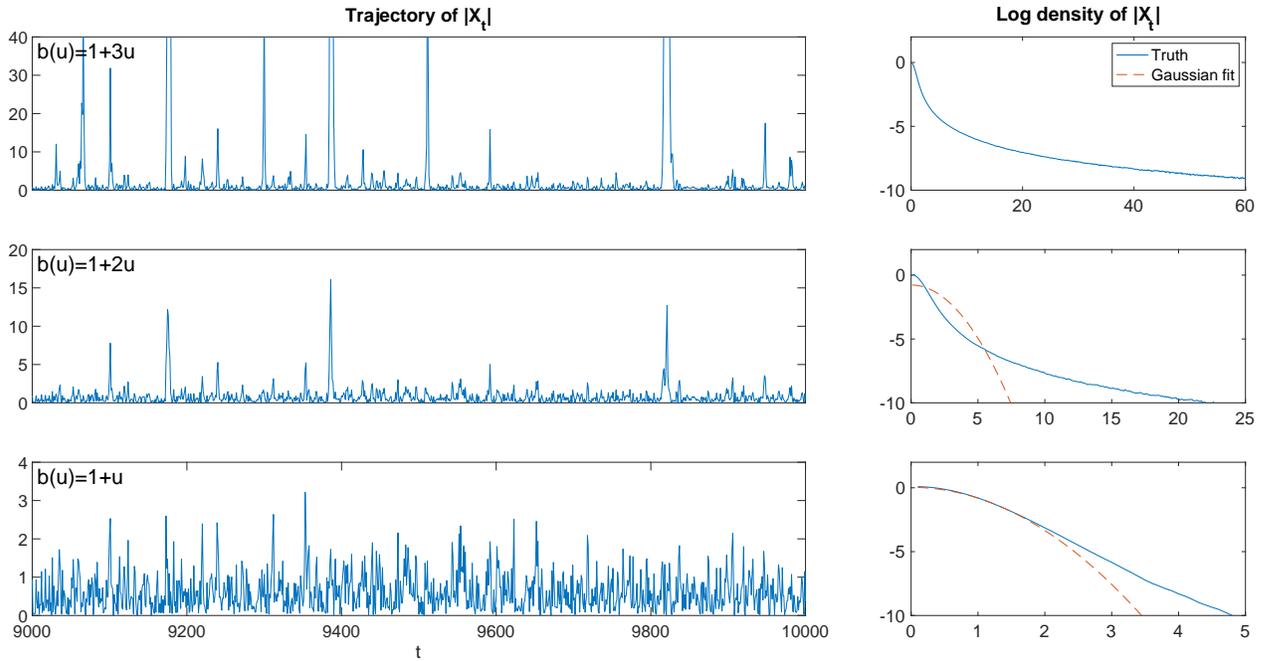}
\caption{Unstable dampings lead to polynomial tails. The damping function being used is labeled at the top left corner of each panel. For the $b(u)=1+3u$ case, the Gaussian fit is invalid since the theoretical variance is infinite.}
\label{fig:unstable}
\end{figure}

In the second group of experiments, we consider damping functions that are nonnegative, but take value $0$ in intervals. We test with piecewise linear functions 
\[
b(u_t)=2[|u_t+1|-1]^+,\quad b(u_t)=[|u_t+1|-1]^+,\quad\text{and } b(u_t)=[|u_t+1|-0.5]^+.
\] 
Here $[x]^+:=\max\{x,0\}$ takes the positive part of the input.  The results are presented in Figure \ref{fig:exp}. We can see that the trajectories of $|X_t|$ are filled with extreme events of various types. The log-density plots all have  linear profiles, which indicates that the tails are exponential.
\begin{figure}
 \hspace*{-2cm}
\includegraphics[scale=0.4]{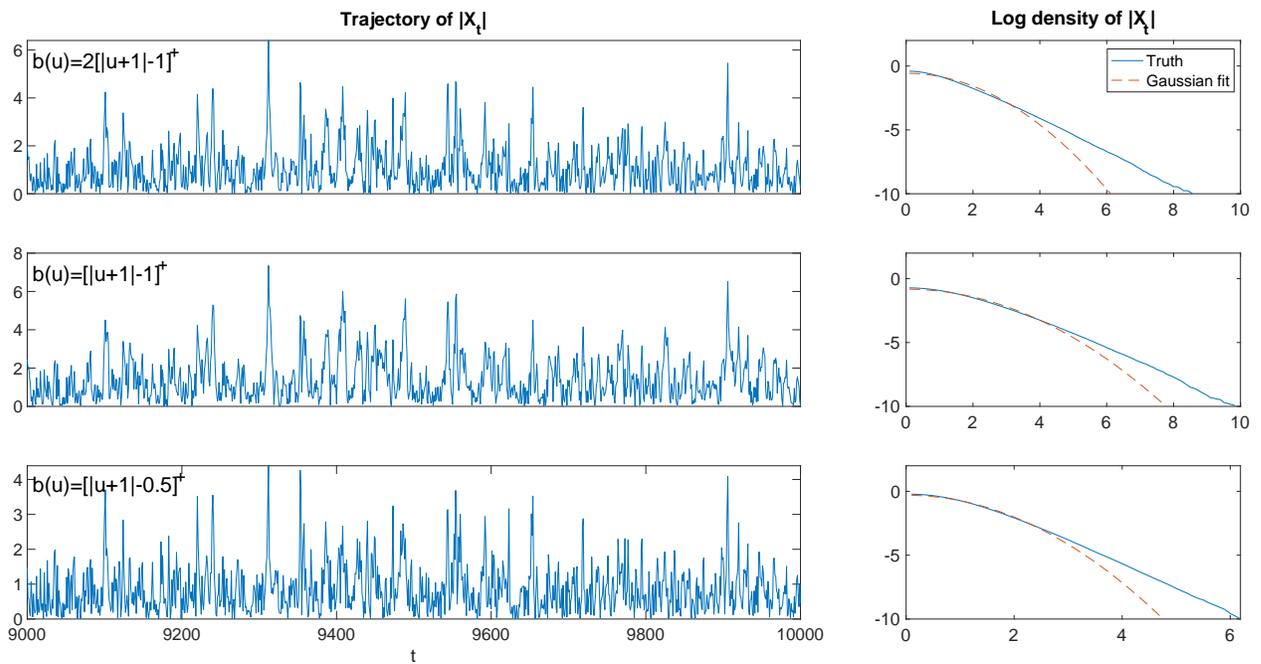}
\caption{Nonnegative dampings that take value  zero near $u=-1$ lead to exponential tails. The damping function being used is labeled at the top left corner of each panel. }
\label{fig:exp}
\end{figure}

In the third group of experiments, we consider damping functions that are nonnegative, but take value $0$ only at the origin. We test with functions $b(u)=|u|^c$, where $c=4,2,1$. The results are presented in Figure \ref{fig:expgauss}. From both the trajectory plots and the density plots, we find that with a larger $c$, the anomalies last longer, and the tails are more like exponential. And for $c=1$, the plots are quite similar to the OU case studied next. 

\begin{figure}
 \hspace*{-2cm}
\includegraphics[scale=0.4]{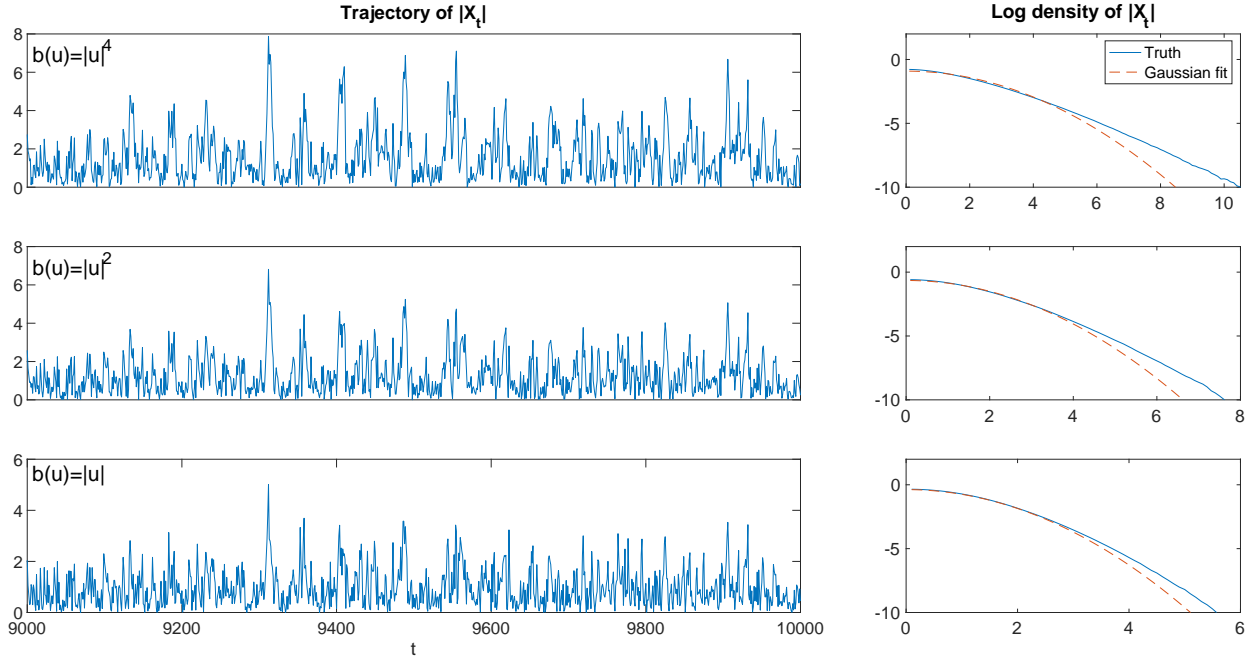}
\caption{ Nonnegative dampings that take value zero at the origin lead to tails between exponential and Gaussian. The damping function being used is labeled at the top left corner of each panel. }
\label{fig:expgauss}
\end{figure}

In the final experiments, we consider damping functions that are strictly positive
\[
b(u_t)=|u_t|^4+1,\quad b(u_t)=|u_t|^2+1,\quad\text{and } b(u_t)\equiv 1.
\] 
So in the last experiment, $X_t$ is simply OU.  The results are presented in Figure \ref{fig:Gauss}. We see that the densities are fitted very well with Gaussian approximations. Moreover, the trajectories are all very similar. 

\begin{figure}
 \hspace*{-2cm}
\includegraphics[scale=0.4]{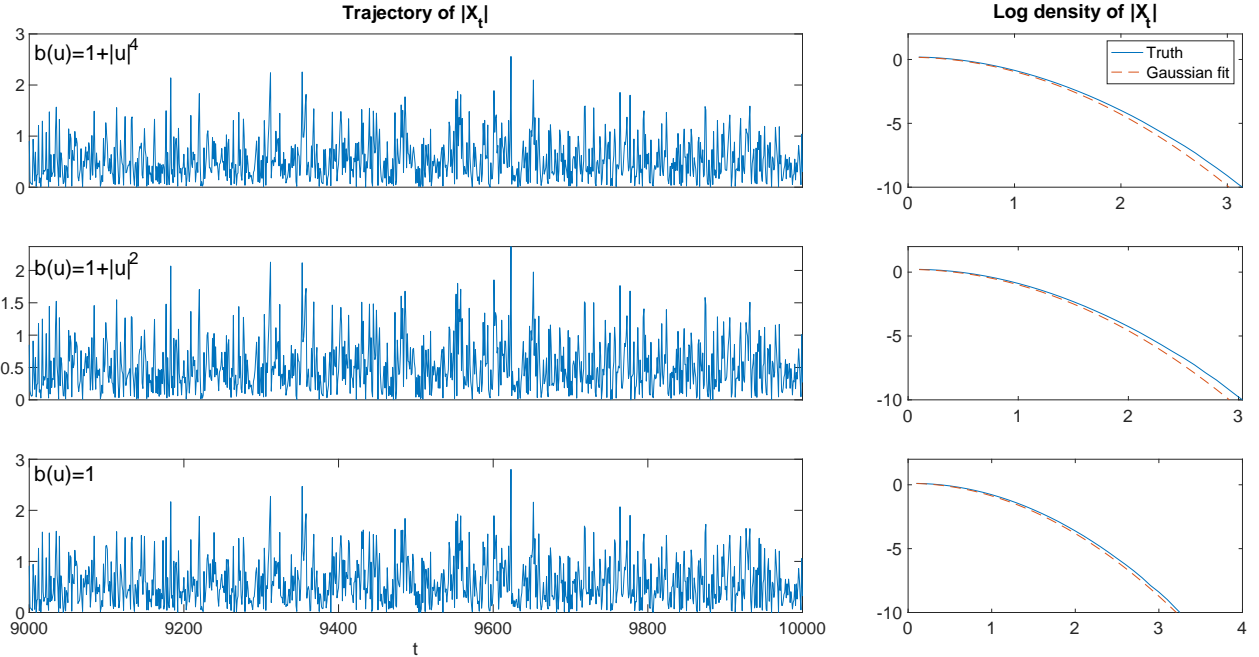}
\caption{Strictly positive dampings lead to Gaussian tails. The damping function being used is labeled at the top left corner of each panel. For the $b(u)=1$ case, $X_t$ is simply an OU process.}
\label{fig:Gauss}
\end{figure}

As a quick summary, the simulation results are in accordance with the predictions made in Theorem \ref{thm:simple}. We  can also see that some simple changes in the damping function can lead to vastly different types of intermittency and heavy tail distributions. In practice, Figure \ref{fig:unstable}-\ref{fig:Gauss} can be used as  references for modeling extreme events.

\subsection{Moment behaviors}
To determine the tail type of $X_t$, we will consider the moments $\mathbb{E}\|X_t\|^p$ of different power $p$. By investigating the moments of density functions like $c x^{-p}, \exp(-c x)$ and $\exp(-c x^2)$,  we see the moments of a random variable $X$ have very different behavior, depending on the distribution of $X$:
\begin{itemize}
\item Polynomial like: $\E X^p<\infty$ if and only if $p$ is below a threshold.
\item Exponential like:  $\E X^p<\infty$ for all $p>0$ and $\log\E X^{2p}\propto 2p\log p$ for large $p$.
\item Gaussian like: $\E X^p<\infty$ and $\log\E X^{2p}\propto p\log p$ for large $p$.  
\end{itemize}
Such difference can be used to obtain the classification in Theorem \ref{thm:simple}.  As we will see, the high moments are very sensitive to the behavior of the damping function $b$. 

Similar moment behaviours can be found in other stochastic models as well. 
Another way to model stochastic intermittency, is to model $X_t$ as in \eqref{sys:dyd}, and let $u_t$ be a continuous time Markov jump process \cite{MH12}. 
Such a model is known as a Markov switching or regime switching diffusion \cite{BGM10, CH14, Sha15}. It is used in atmospheric science to model complex cloud precipitation, in filtering theory to represent model error, and in financial time series to model hidden market behavior \cite{MS09, MS09b,MH12, tsay05}. 

Markov switching models can also produce heavy tail distributions. In fact, the quadchotomy in Theorem \ref{thm:simple} has a similar version for finite state Markov switching models in \cite{BGM10}. There are further efforts to generalize this result to infinite state spaces \cite{CH14, Sha15}, and to investigate the regularity of underlying measures \cite{BH12, BHM15}. Yet these results often require a life-death process in the background, which limits their range of application.

While the theoretical result here can be interpreted as an extension of \cite{BGM10},  such extension is nontrivial. Stochastic differential equations (SDE) are natural tools when modeling physical processes of continuous values. Approximating them as Markov jump processes on a meshgrid is often inappropriate, because the number of grid points scales exponentially with the dimension $d_X+d_u$, and becomes too large for physics related problems, for example the passive tracer field model mentioned earlier. Many physical concepts such as energy dissipation and flow contraction are usually understood only through SDE formulation. The new results in below will reveal the important connection between these physical concepts and the heavy tail phenomena. 

Moreover, this paper employs a different analysis framework from the ones used in \cite{BGM10, CH14}. In the previous framework, the moment analysis is established by investigating the amount of time the Markov jump process spends in each state. This is difficult to be generalized for an SDE. In this paper, the estimates are constructed by finding novel product type Lyapunov functions. A similar strategy can also be implemented on Markov switching processes to understand complicated geometric ergodicity and multi-scale behaviors \cite{MT16RMS}. 

Apart from SDE, moment analysis can also be conducted for stochastic partial differential equations (SPDE) \cite{CD15, CKK17, GX17}. This has been applied to understand the regularity, growth speed, and localization of the SPDE solutions. So far, these results apply to specific SPDEs, for example the heat equation and the Anderson model. In comparison, our requirements for the SDE are rather general. It will be interesting if the analysis framework developed here can be applied to SPDEs as well.

\subsection{Connection to large deviations of trajectory average}
Interestingly, our result also leads to a non-asymptotic large deviation bound for trajectory average. Given any function $b$, by the Birkhoff ergodic theorem, we know
\[
t^{-1}\int^t_0 b(u_s)\rmd s\overset{t\to \infty}{\longrightarrow} \langle \pi, b\rangle:=\int b(u) \pi(du). 
\]
Such convergence has been used routinely to compute $ \langle \pi, b\rangle$, known as the Markov Chain Monte Carlo method. It is natural to ask how does the deviation 
\[
D_t:=t^{-1}\int^t_0 b(u_s)\rmd s-\langle \pi, b\rangle
\] 
converge to zero as $t$ becomes large. 

To see how our study of system \eqref{sys:dyd} connects with this problem, we let 
\begin{equation}
\label{eqn:XtLDP}
X_t:=\exp\left(\int^t_0 (b(u_s)-\langle \pi, b\rangle) \rmd s\right).
\end{equation}
Then clearly $X_t>0$ follows the ordinary differential equation (ODE) $\dot{X}_t=(b(u_t)-\langle \pi,b\rangle) X_t$, and it fits in the formulation \eqref{sys:dyd} with $\sigma_x=0$. A large deviation bound of $D_t$ can be obtained by finding the moments of $X_t$ and then apply the Markov inequality,
\[
\Prob(D_t \geq c) \leq \frac{\E \exp(t p D_t)}{ \exp(p c t)}=\frac{\E X_t^p}{ \exp(pct)}.
\]
Corollary \ref{cor:LDP} below implements this idea to asymptotically contractive $u_t$. Recent results \cite{Ebe16, EGZ16} have shown that a large class of diffusion processes, for example over-damped Langevin processes with a convex-at-infinity potential, are asymptotically contractive.

\subsection{Paper arrangement and preliminaries}
The remainder of this paper is arranged as follow. In Section \ref{sec:poly}, Theorem \ref{thm:unstable} demonstrates that an unstable damping leads to polynomial tails. As an example, Section \ref{sec:bivariate} considers the affine damping in Figure \ref{fig:unstable}, where the exact polynomial order of the tail can be found. As another example, Section \ref{sec:LDP} employs our framework to setup a large deviation bound for long time average. Section \ref{sec:exp} discusses the scenario where the damping is nonnegative and can take value zero. Theorem \ref{thm:exponential} illustrates the necessary conditions that lead to exponential tails, while Proposition \ref{prop:weakinter} considers more general scenarios. Strictly positive  damping leading to Gaussian tails is not difficult to show and might has been proven before. But for self-containedness, we give a short proof in Section \ref{sec:Gaussian}. Lastly, Section \ref{sec:condGauss} discusses how to apply our framework to more general conditional Gaussian systems of type \eqref{sys:condGauss}.   

In order to focus on the delivery of the main ideas, we only provide the most important  arguments in our discussion. \textbf{Most technical verifications are allocated  in the appendix.}

In this paper, we use $\|a\|$ to denote the $l_2$ norm of a  vector $a$, $\langle a, b\rangle$  to denote the inner product of $a$ and $b$. $\langle \pi, f\rangle=\int f(u)\pi(du)$ is the average of $f$ under the equilibrium measure $\pi$. We denote the generator of process $(X_t, u_t)$ as $\mathcal{L}$, which can be written explicitly as below for any $C^2$ function $f$, 
\[
\mathcal{L} f(x,u)=-\langle bx, \nabla_x f\rangle+\langle h, \nabla_u f\rangle+\frac12 \text{tr}(\sigma_x^2\nabla_x^2  f)+\frac12 \text{tr}(\nabla_u^2 f). 
\]
In above, $\nabla_x$ and $\nabla_u$ are the gradients with respect to variables $x$ and $u$, and $\nabla^2_x$ and $\nabla^2_u$ are the corresponding Hessian matrices.  We can also define the Carre du champ operator using $\mathcal{L}$ \cite{BGL13}:
\begin{equation}
\label{eqn:carre}
\Gamma(f,g)=\frac{1}{2}(\mathcal{L}(fg)-f\mathcal{L}g-g\mathcal{L}f)=\frac12\sigma_x^2 \langle  \nabla_x g,  \nabla_x f\rangle+\frac12 \langle  \nabla_u g,  \nabla_u f\rangle. 
\end{equation}
Obviously $\Gamma$ is bilinear, symmetric and positive. We will also write $\Gamma(g):=\Gamma(g,g)$ for simplicity.  One important arithmetic property of $\Gamma$ is the following chain rule of the generator 
\begin{equation}
\label{eqn:chain}
\mathcal{L} \varphi(g)=\dot{\varphi}(g)\mathcal{L} g+\ddot{\varphi}(g)\Gamma (g).
\end{equation}
The derivation of the formula above and more properties of $\Gamma$ can be found in \cite{BGL13}. Also, it is worth noting that in our discussion below, we are often concerned with functions of only one variable, that is $f(x,u)=f(x)$ or $f(x,u)=f(u)$. Then some parts of the formulas above will vanish. 

The moment function $\|x\|^p$ will naturally be of interest in our discussion. Unfortunately it is not $\mathcal{C}^2$ at the origin when $p<2$, so $\mathcal{L}$ cannot be applied. To remedy this, we will often use 
\begin{equation}
\label{eqn:CEp}
\CE_p(x):=\frac{\|x\|^{p+2}}{1+\|x\|^2}+1
\end{equation}
as a surrogate, which is also used in \cite{BGM10, MT16RMS}. Its equivalence with $\|x\|^p$ is established below:
\begin{lem}
\label{lem:Eq}
 For any $p>0$, $\CE_p(x)$ in \eqref{eqn:CEp} is equivalent to $\|x\|^p$ in the following sense:
\[
\frac{1}{2}(\|x\|^p+1)\leq \CE_p(x)\leq \|x\|^p+1.
\]
Moreover, for any $\delta>0$, there is a $C_\delta>0$ such that 
\[
-(pb(u)+\delta|b(u)|+\delta) \CE_p(x)-C_\delta (|b(u)|+1) \leq \mathcal{L}\CE_p(x,u)\leq -(pb(u)-\delta |b(u)|-\delta) \CE_p(x)+C_\delta (|b(u)|+1).
\]
At here and below, we use $\CE_p(x,u):=\CE_p(x)$ with the generator to emphasize that $\mathcal{L}\CE_p(x,u)$ depends on both $x$ and  $u$. 
\end{lem}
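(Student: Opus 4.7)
The plan is to verify the two assertions separately. For the norm equivalence I would write $\CE_p(x)=\|x\|^p\cdot\frac{\|x\|^2}{1+\|x\|^2}+1$ and observe that the weight $\frac{\|x\|^2}{1+\|x\|^2}$ lies in $[0,1]$, giving $\CE_p\leq \|x\|^p+1$ immediately. For the lower bound I would split into $\|x\|\geq 1$ (the weight is at least $\tfrac12$, so $\CE_p\geq \tfrac12\|x\|^p+1\geq \tfrac12(\|x\|^p+1)$) and $\|x\|<1$ (where $\|x\|^p\leq 1$, so $\tfrac12(\|x\|^p+1)\leq 1\leq \CE_p$).

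For the generator bound, since $\CE_p$ depends only on $x$, only the $x$-part of $\mathcal{L}$ survives:
\[
\mathcal{L}\CE_p(x,u)=-b(u)\langle x,\nabla_x\CE_p\rangle+\tfrac12\text{tr}(\sigma_x^2 \nabla_x^2\CE_p).
\]
Writing $\CE_p=h(\|x\|^2)+1$ with $h(s)=s^{(p+2)/2}/(1+s)$ and using $\langle x,\nabla_x\CE_p\rangle=2\|x\|^2 h'(\|x\|^2)$, I would derive the clean identity
\[
\langle x,\nabla_x\CE_p\rangle=(\CE_p-1)\Big(p+\frac{2}{1+\|x\|^2}\Big)=p\CE_p-p+\frac{2(\CE_p-1)}{1+\|x\|^2}.
\]
This isolates the leading drift term $-pb(u)\CE_p$. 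The remainder $\frac{2(\CE_p-1)}{1+\|x\|^2}=\frac{2\|x\|^{p+2}}{(1+\|x\|^2)^2}$ is bounded by $(2/M^2)\CE_p$ on $\{\|x\|\geq M\}$ and by a constant on $\{\|x\|\leq M\}$, so choosing $M$ with $2/M^2<\delta$ yields $\frac{2(\CE_p-1)}{1+\|x\|^2}\leq \delta\CE_p+C_\delta$ for any prescribed $\delta>0$. Multiplying by $-b(u)$ and absorbing the residual $pb(u)$ into a $|b(u)|+1$ term produces the two-sided drift bound $-pb(u)\CE_p\pm \delta|b(u)|\CE_p\pm C_\delta(|b(u)|+1)$.

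For the diffusion term I would note $(\nabla_x^2\CE_p)_{ij}=2h'(s)\delta_{ij}+4x_ix_j h''(s)$ at $s=\|x\|^2$, so $\text{tr}(\sigma_x^2\nabla_x^2\CE_p)$ is a linear combination of $h'(s)$ and $sh''(s)$; both are $O(\|x\|^{p-2})$ at infinity and bounded near the origin. The elementary estimate $\|x\|^{p-2}\leq \delta\|x\|^p+C_\delta$ (trivial when $p\leq 2$, and by a threshold split otherwise) then gives $|\tfrac12\text{tr}(\sigma_x^2\nabla_x^2\CE_p)|\leq \delta\CE_p+C_\delta$. Adding this to the drift bound and rescaling $\delta$ produces the claimed inequality. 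The main step --- and the only delicate one --- is the clean decomposition $\langle x,\nabla_x\CE_p\rangle=p\CE_p+\text{(terms absorbable into $\delta\CE_p+C_\delta$)}$, which is precisely what motivates the surrogate $\CE_p$, smooth everywhere unlike $\|x\|^p$ at the origin when $p<2$; the rest reduces to careful bookkeeping of $\delta$-dependent constants.
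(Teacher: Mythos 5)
Your proposal is correct and follows essentially the same route as the paper: compute the first and second derivatives of $\CE_p$, isolate the leading drift term $-pb(u)\CE_p$, and absorb the remaining gradient and Hessian contributions into $\delta\,|b(u)|\CE_p+\delta\CE_p+C_\delta(|b(u)|+1)$ via threshold-splitting/Young-type estimates. The identity $\langle x,\nabla_x\CE_p\rangle=(\CE_p-1)\bigl(p+\tfrac{2}{1+\|x\|^2}\bigr)$ is a slightly cleaner reorganization of the paper's explicit gradient formula but the underlying computation is the same; the only nit is that the stand-alone inequality $\|x\|^{p-2}\leq\delta\|x\|^p+C_\delta$ fails near the origin for $p<2$, though this is harmless since, as you note, the relevant derivatives of $\CE_p$ are genuinely bounded there.
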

Another useful result is a comparison principle for systems of form \eqref{sys:dyd}. 
\begin{prop}
\label{prop:compare}
Suppose there is another process $Y_t\in\reals^{d_X}$ driven by the same $u_t$ process:
\[
\rmd Y_t=-b'(u_t)Y_t \rmd t+\sigma_x \rmd W_t,\quad Y_0=X_0,
\]
where $b(u)\geq b'(u)$ for all $u$. Then $\E \|Y_t\|^{2p}\geq \E \|X_t\|^{2p}$ for all integer $p$.
\end{prop}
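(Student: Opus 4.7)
The plan is to condition on the full trajectory $u_{[0,t]}$ and exploit the conditional Gaussian structure. Once $u_{[0,t]}$ is fixed, both $X_t$ and $Y_t$ satisfy linear SDEs with deterministic coefficients driven by the same Brownian motion $W_t$, so variation of constants gives
\[
X_t\mid u \sim \mathcal{N}\!\Bigl(\phi(0,t;u)\,X_0,\ \sigma_x\sigma_x^{T}\!\int_0^t \phi(s,t;u)^2\,\rmd s\Bigr),\quad Y_t\mid u \sim \mathcal{N}\!\Bigl(\phi'(0,t;u)\,X_0,\ \sigma_x\sigma_x^{T}\!\int_0^t \phi'(s,t;u)^2\,\rmd s\Bigr),
\]
where $\phi(s,t;u)=\exp\bigl(-\int_s^t b(u_r)\,\rmd r\bigr)$ and $\phi'(s,t;u)$ is defined analogously with $b'$. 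Because $b\geq b'$ pointwise, $0<\phi\leq \phi'$, so the conditional mean of $Y_t$ is a scalar $c(u)\geq 1$ times that of $X_t$ (parallel, same direction), and the conditional covariance of $Y_t$ dominates that of $X_t$ in the positive-semidefinite order.

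By the tower property of expectation, the claim then reduces to a purely Gaussian moment comparison: if $G_1\sim\mathcal{N}(m,\Sigma_1)$ and $G_2\sim\mathcal{N}(cm,\Sigma_2)$ with $c\geq 1$ and $\Sigma_2\succeq\Sigma_1$, then $\E\|G_2\|^{2p}\geq \E\|G_1\|^{2p}$ for every nonnegative integer $p$. I would prove this by two monotone interpolations. First, I would hold the covariance at $\Sigma_1$ and scale the mean from $m$ to $cm$: writing $Z\sim\mathcal{N}(0,\Sigma_1)$, the multinomial expansion
\[
\E\|cm+Z\|^{2p}=\sum_{i+j+k=p}\binom{p}{i,j,k}\,c^{2i+j}\,2^{j}\,\|m\|^{2i}\,\E\bigl[\langle m,Z\rangle^{j}\|Z\|^{2k}\bigr]
\]
vanishes term-by-term when $j$ is odd (by the $Z\mapsto -Z$ symmetry) and is nonnegative otherwise (the integrand is a product of even powers), so it is a polynomial in $c$ with nonnegative coefficients, hence nondecreasing for $c\geq 0$. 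Second, I would enlarge the covariance from $\Sigma_1$ to $\Sigma_2$ by writing $G_2\stackrel{d}{=}(cm+Z_1)+Z_D$ with independent $Z_1\sim\mathcal{N}(0,\Sigma_1)$ and $Z_D\sim\mathcal{N}(0,\Sigma_2-\Sigma_1)$ (legitimate because $\Sigma_2-\Sigma_1\succeq 0$). Conditioning on $Z_1$ and applying the same vanishing/nonnegativity argument to the inner expectation over $Z_D$ gives $\E[\|G_2\|^{2p}\mid Z_1]\geq \|cm+Z_1\|^{2p}$; taking expectations closes the chain.

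I expect the main obstacle to be the bookkeeping in the mean-interpolation step: one must check simultaneously that the odd-$j$ cross moments $\E[\langle m,Z\rangle^{j}\|Z\|^{2k}]$ vanish and that the even-$j$ ones are nonnegative. Neither fact is deep on its own, but together they are precisely what makes the expansion a polynomial in $c$ with nonnegative coefficients. Once the Gaussian lemma is in place, I would apply it conditionally on $u_{[0,t]}$ with $m=\phi(0,t;u) X_0$, $c(u)=\phi'(0,t;u)/\phi(0,t;u)\geq 1$ (the case $X_0=0$ being trivial since both means vanish), and $\Sigma_2-\Sigma_1=\sigma_x\sigma_x^{T}\!\int_0^t(\phi'(s,t;u)^2-\phi(s,t;u)^2)\,\rmd s\succeq 0$, and then average over the law of $u_{[0,t]}$ to obtain $\E\|Y_t\|^{2p}\geq\E\|X_t\|^{2p}$.
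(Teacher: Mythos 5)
Your proof is correct and follows essentially the same route as the paper: condition on the $u$-trajectory, use Duhamel's formula to exhibit $X_t$ and $Y_t$ as conditionally Gaussian with $\|\mu_X\|\leq\|\mu_Y\|$ and $\Sigma_X\preceq\Sigma_Y$, and then compare Gaussian even moments via a multinomial expansion in which odd cross-moments vanish and even ones are nonnegative. The only cosmetic difference is that you split the Gaussian comparison into a mean-scaling step and a covariance-enlargement step (the latter via the additive decomposition $G_2\stackrel{d}{=}G_1'+Z_D$), whereas the paper handles both at once by noting that $\E\langle\mu_X,Z\rangle^k\|Z\|^{2j}$ depends on $\mu_X$ only through $\|\mu_X\|$ by rotational invariance.
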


\section{Polynomial tails from unstable dampings}
\label{sec:poly}
Our first result shows that if  the damping is unstable, that is $b(u_*)<0$ for some $u_*$, then $X_t$ in \eqref{sys:dyd} will have polynomial tails. This involves two parts, showing $\E \|X_t\|^p<\infty$ when $p>0$ is small enough, and $\E \|X_t\|^p=\infty$ when $p$ is large enough.

To establish the lower bound, that is $\lim_t \E \|X_t\|^p=\infty$ for a large $p$, it suffices to assume some general regularity and growth conditions on $b$ and $h$. 
\begin{aspt}
\label{aspt:regular}
Suppose the following holds for all $y$, where  $C>0, m\geq 2$ are constants,  $M_y$ is a constant that may depend on $y$:
 \[
 \|h(x)\|\leq C\|x\|^{m-1}+C,\quad |b(x)-b(y)|\leq M_y \|x-y\|+M_y \|x-y\|^{m}.
 \]
\end{aspt}
To establish the upper bound, we need in addition that  $u_t$ is asymptotically contractive:
\begin{defn}
\label{defn:lip}
Given two distributions $\mu$ and $\nu$, we use $d(\mu,\nu)$ to denote the  Wasserstein-1 distance between $\mu$ and $\nu$, generated by the $l_2$ norm. Let $P_t^u$ denote the distribution of $u_t$ with $u_0=u$. We say $u_t$ is asymptotically contractive if there are constants $C_\gamma,\gamma>0$ such that 
\[
d(P_t^u, P_t^v)\leq C_\gamma\exp(-\gamma t )\|u-v\|
\]
holds for all $u,v$ and $t$. 
\end{defn}
Recent results \cite{Ebe16, EGZ16} have shown that a wide range of SDE are asymptotically contractive. For example, if $u_t$ follows the overdamped Langevin dynamics, that is $h(u)=-\nabla H(u)$ in \eqref{sys:dyd}, and the potential $H$ is strictly convex outside a bounded region, then $u_t$ is asymptotically contractive.
If $u_t$ is a stable OU process, this assumption holds naturally. 

The general statement of unstable damping leads to polynomial tails is given below.
\begin{thm}
\label{thm:unstable}
Under  Assumption \ref{aspt:regular}, suppose that $b(u^*)<0$ for a certain $u^*$.
\begin{enumerate}[1)]
\item If $\sigma_x>0$, then 
\[
\lim_{t\to\infty} \E \|X_t\|^p=\infty,\quad \text{for sufficiently large }p. 
\]
\item If $u_t$ is asymptotically contractive, $b$ has Lipschitz constant $\|b\|_{Lip}$, and the average damping $\langle \pi, b\rangle>0$,  then for any $p$ such that 
\[
p\langle\pi,b\rangle- \frac12 p^2 C_\gamma^2\gamma^{-2} \|b\|^2_{Lip}>0, 
\]
we have 
\[
\limsup_{t\to\infty} \E \|X_t\|^p<\infty. 
\]
\item Assuming the conditions of 2), if in addition $\sigma_x>0$, and $h$ preserves energy, that is for some constants $\lambda>0$ and $M_\lambda>0$,
\[
\langle h(u), u\rangle \leq -\lambda \|u\|^2+M_\lambda,
\] 
then $(X_t, u_t)$ is geometrically ergodic.
\end{enumerate} 
\end{thm}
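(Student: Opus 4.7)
The plan is to exploit the conditional linearity of $X_t$: given $u_{[0,t]}$, the SDE is linear and Gaussian, so
\[
X_t = X_0\, e^{-\int_0^t b(u_s)\rmd s} + \sigma_x\int_0^t e^{-\int_s^t b(u_r)\rmd r}\rmd W_s,
\]
and every moment of $\|X_t\|$ becomes an exponential functional of $\int_0^\cdot b(u_r)\rmd r$. The three parts then reduce to three statements about these functionals, with Lemma~\ref{lem:Eq} providing the bridge from this representation to the generator bounds required for Lyapunov arguments.

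For Part 1, I would pick a small ball $A$ about $u^\ast$ on which $b\leq -\epsilon<0$ and restrict to the event $E_t=\{u_r\in A\ \forall r\in[0,t]\}$. On $E_t$ the conditional variance of $X_t$ is at least $\sigma_x^2\int_0^{t/2}e^{2\epsilon(t-s)}\rmd s\gtrsim \sigma_x^2 e^{2\epsilon t}$, so the Gaussian moment formula gives $\E\|X_t\|^{2p}\geq c_p\sigma_x^{2p}e^{2p\epsilon t}\Prob(E_t)$. A standard small-ball estimate for the non-degenerate diffusion $u_t$ yields $\Prob(E_t)\geq e^{-\kappa t}$ for some $\kappa>0$, and choosing $p>\kappa/(2\epsilon)$ makes the right-hand side diverge. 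The hypothesis $\sigma_x>0$ is essential here because it produces the Gaussian noise term that is amplified on $E_t$.

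For Part 2, I would combine the upper bound of Lemma~\ref{lem:Eq} with Dynkin's formula in a Gronwall fashion to get
\[
\E\CE_p(X_t)\leq \CE_p(X_0)\,\E\exp\!\left(-\int_0^t\bigl(pb(u_s)-\delta|b(u_s)|-\delta\bigr)\rmd s\right)+(\text{forcing}).
\]
The task then reduces to a Herbst-type exponential concentration
\[
\E\exp\!\left(-p\int_0^t(b(u_s)-\langle\pi,b\rangle)\rmd s\right)\leq C\exp\!\left(\tfrac12 p^2 C_\gamma^2\gamma^{-2}\|b\|_{Lip}^2\, t\right).
\]
My plan is to derive this by solving the Poisson equation $-\mathcal L g=b-\langle\pi,b\rangle$, decomposing $\int_0^t(b(u_s)-\langle\pi,b\rangle)\rmd s=g(u_0)-g(u_t)+M_t$ for a martingale $M_t$, and then controlling $\langle M\rangle_t$ by $(C_\gamma\gamma^{-1}\|b\|_{Lip})^2 t$ using that the Wasserstein-1 contraction gives the bound $\|\nabla g\|_\infty\leq C_\gamma\|b\|_{Lip}/\gamma$. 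Feeding this back, the net exponent is $-t(p\langle\pi,b\rangle-\tfrac12 p^2 C_\gamma^2\gamma^{-2}\|b\|_{Lip}^2)+O(\delta t)$, which under the theorem's hypothesis and for sufficiently small $\delta$ is strictly negative, so $\E\CE_p(X_t)$ stays bounded uniformly in $t$.

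For Part 3, I would upgrade the moment bound into a joint Foster--Lyapunov drift by taking a product-type function $V(x,u)=\CE_p(x)\phi(u)+\|u\|^{2k}$. Here $\phi(u)>0$ is the principal eigenfunction of the Feynman--Kac semigroup appearing in Part 2, chosen so that $\mathcal L\phi-(pb(u)-\delta|b(u)|-\delta)\phi\leq -\lambda\phi$ outside a compact set; its existence and positivity are exactly what the argument of Part 2 guarantees. The energy-preservation assumption gives $\mathcal L\|u\|^{2k}\leq -\lambda'\|u\|^{2k}+M'$, which absorbs the lower-order contributions proportional to $\phi(|b|+1)$ for $k$ large. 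Non-degeneracy of the joint noise then produces strictly positive transition densities and minorization on sublevel sets of $V$, so geometric ergodicity follows from Harris' theorem. The step I expect to be the main obstacle is the sharp Herbst bound in Part 2 with exactly the variance proxy $C_\gamma^2\gamma^{-2}\|b\|_{Lip}^2$: Wasserstein-1 contraction on its own only gives a law of large numbers, and the factor $\tfrac12 p^2$ that the theorem statement forces is delivered only by the Poisson-equation martingale argument together with the precise contraction rate $\gamma$.
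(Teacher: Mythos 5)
Your Part 1 argument is a genuinely different route from the paper's: the paper constructs an explicit sub-Lyapunov function $g=\exp(q\eta)$ with $\eta(u)=-c\|u-u^*\|^m$ (Lemma~\ref{lem:unstablelower}), then appeals to controllability of the joint process in Lemma~\ref{lem:gronwall}, whereas you condition on the small-ball event $E_t=\{u_{[0,t]}\subset A\}$, use the conditional Gaussian variance to amplify the noise, and invoke a support-theorem lower bound $\Prob(E_t)\gtrsim e^{-\kappa t}$. Your approach is valid under Assumption~\ref{aspt:regular} because $h$ is locally bounded and the diffusion on $u$ is non-degenerate, so the small-ball decay rate is standard; what you gain is a more transparent picture of where the mass comes from (the noise injected during excursions into the unstable region), while what the paper's construction buys is a purely drift-side estimate that slots directly into the Lyapunov machinery it reuses for the ergodicity claim.

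For Part 2, the martingale decomposition $\int_0^t(b-\langle\pi,b\rangle)\rmd s = g(u_0)-g(u_t)+M_t$ with the Poisson equation is the same computation the paper does, just phrased probabilistically: your $g$ is exactly the $\theta$ defined in~\eqref{eqn:buildtheta} by the Feynman--Kac formula, and the gradient bound $\|\nabla g\|_\infty\leq C_\gamma\gamma^{-1}\|b\|_{Lip}$ is Lemma~\ref{lem:linear}; applying $\exp$ and Ito to $\theta(u_t)$ produces the chain-rule identity the paper uses for $f=\exp(q\theta)$, while applying the exponential-supermartingale inequality to $M_t$ produces your Herbst estimate. These are interchangeable. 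What you leave loose is the step from the drift inequality of Lemma~\ref{lem:Eq} to the display $\E\CE_p(X_t)\leq\CE_p(X_0)\E\exp(\cdots)+(\text{forcing})$: you cannot apply Dynkin and Gronwall naively because the drift coefficient $pb(u_s)$ is random and correlated with $X_s$. You must first condition on the whole path $u_{[0,t]}$, apply Gronwall pathwise, and only then take expectation; the resulting forcing term is $\int_0^t\E[(1+|b(u_s)|)\exp(-\int_s^t(pb-\delta|b|-\delta))]\rmd s$, which needs a H\"older step plus your Herbst bound plus an integrability assumption on $b(u_t)$ to close. The paper's product Lyapunov function $V=f(u)\CE_q(x)$ bundles both factors into a single Dynkin application and avoids this bookkeeping --- that is precisely what Lemma~\ref{lem:lyap} and Lemma~\ref{lem:gronwall} are for.

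Part 3 has a genuine gap. The forcing produced by the product Lyapunov function is $C_\delta(1+|b(u)|)f(u)$ with $f(u)=\exp(q\theta(u))$, and since $\theta$ has linear growth this term grows like $\exp(C\|u\|)$. A polynomial additive term $\|u\|^{2k}$ cannot dominate $\exp(C\|u\|)$ no matter how large $k$ is, so the drift inequality $\mathcal LV\leq -\rho V+K$ will fail at infinity. The paper instead adds $M_5\exp(\alpha\|u\|^2)$, which satisfies the needed drift inequality under the energy-dissipation hypothesis (see the computation after~\eqref{tmp:gauss} and the proof of Theorem~\ref{thm:unstable}) and does dominate $\exp(C\|u\|)$; you should replace your $\|u\|^{2k}$ by a Gaussian potential of that form. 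You also do not need to invoke existence of a principal eigenfunction for the Feynman--Kac semigroup --- which would require a Krein--Rutman-type argument the paper never makes --- because the explicit $f=\exp(q\theta)$ already satisfies the drift inequality you want; this keeps the construction elementary.
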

The proof comes as a combination of  the results from the next three subsections. The complete proof can be found in the appendix.

Before we move on, we give a quick remark on the average damping condition $\langle \pi, b\rangle>0$.
This is a necessary condition. In the simplified case $\sigma_x=0$ and $X_0=x_0$, 
\[
\E \|X_t\|^p=\|x_0\|^p \E \exp\left(-p\int^t_0 b(u_s)\rmd s\right). 
\]
By Jensen's inequality, the long time damping effect on $\|X_t\|^p$ can be bounded by
\[
\E \exp\left(-p\int^t_0 b(u_s)\rmd s\right)\geq \exp\left(-p\int^t_0 \E b(u_s)\rmd s\right)\overset{t\to \infty}{\approx}  \exp\left(-p t\langle \pi,  b\rangle \right).
\]
So in order for $\E \|X_t\|^p$ to be stable, $\langle \pi, b\rangle$ needs to be positive.  On the other hand, Jensen's inequality provides only one side of the estimate. In fact, when $b$ is not strictly positive, the long time damping effect, $\E \exp\left(-p\int^t_0 b(u_s)\rmd s\right)$, does not scale as $\exp(-c p t)$ for large $p$. This is the main mechanism behind the extreme events and heavy tails.

\label{sec:lyap}
\subsection{Building  Lyapunov functions}
In order to show $\mathbb{E}\|X_t\|^p$ is bounded uniformly in time, we will try to find a  Lyapunov function $V(x,u)\approx \|x\|^p$, such that for some $\rho , k_v>0$, when applying the generator $\mathcal{L}$ of  \eqref{sys:dyd}
\begin{equation}
\label{tmp:up}
\mathcal{L} V(x,u)\leq -\rho  V(x,u) + k_v.
\end{equation}
Then applying  Gronwall's inequality and Dynkin's formula, we have 
\[
\mathbb{E}V(X_t, u_t)\leq e^{-\rho  t}\mathbb{E}V(X_0,u_0)+k_v/\rho .
\] 
Conversely, in order to show $\mathbb{E}\|X_t\|^p\to \infty$ for $t\to \infty$, it suffices to find a function  $U(x,u)\approx  \|x\|^p$,  such that for some $\rho , k_v>0$
\[
\mathcal{L} U(x,u)\geq \rho  U(x,u) - k_v. 
\]

The key to this method is finding the proper $V$ and $U$. One naive attempt is letting $U$ or $V$ to be $\|x\|^p$. However, this will not be sufficient, since for $p\geq 2$, 
\[
\mathcal{L}\|x_t\|^p=-pb(u_t) \|x_t\|^p+\frac12\sigma_x^2p(p-2+d_X)\|x_t\|^{p-2}.  
\]
An inequality like \eqref{tmp:up} does not hold because of  the appearance of $b(u_t)$.

The main idea here is to look for a function that is the  product of two parts, one part is a potential that depends on $u$, the other part is roughly the  moment of $x$:
\begin{lem}
\label{lem:lyap}
%Let $u_t$ be a continuous Markov process in $E$. Let $X_t$ be a continuous process in $F$ such that $(X_t,u_t)$ is jointly Markovian. 
Fix  $q>0$ and $\delta>0$. Assume there are  functions  $\CE_q>0, f>0$ such that for some $C_\delta>0$ and $\rho$
\begin{equation}
\label{eqn:lyapup}
\begin{gathered}
\mathcal{L} \CE_q(x,u)\leq -(qb(u)-\delta|b(u)|-\delta) \CE_q(x,u)+C_\delta(1+|b(u)|),\\
\mathcal{L} f(u)-(qb(u)-\delta|b(u)|-\delta) f(u)\leq -\rho f(u),
\end{gathered}
\end{equation}
then $V(x,u)= f(u) \CE_q(x,u)$ satisfies: $\mathcal{L}V(x,u)\leq -\rho V(x,u)+C_\delta(1+|b(u)|) f(u).$ 

The converse is also true. If there are  functions  $\CE_q>0, g>0$ such that 
\begin{equation}
\label{eqn:lyaplow}
\begin{gathered}
\mathcal{L} \CE_q(x,u)\geq -(qb(u)+\delta|b(u)|+\delta)\CE_q(x,u)-C_\delta(1+|b(u)|),\\
 \mathcal{L} g(u)-(qb(u)+\delta|b(u)|+\delta) g(u)\geq \rho g(u),
 \end{gathered}
\end{equation}
then $U(x,u)=g(u) \CE_q(x,u)$ satisfies: $\mathcal{L}U(x,u)\geq \rho U(x,u) -C_\delta(1+|b(u)|) g(u).$ 

\end{lem}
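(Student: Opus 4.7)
The plan is to obtain the lemma as a direct consequence of the Leibniz-type product formula for the generator, combined with the fact that $\CE_q$ is a function of $x$ alone and $f$ a function of $u$ alone (per the convention immediately after \eqref{eqn:CEp}). Recall that the Carre du champ operator defined in \eqref{eqn:carre} gives the identity $\mathcal{L}(fg) = f\mathcal{L}g + g\mathcal{L}f + 2\Gamma(f,g)$. Applying this to $V = f \cdot \CE_q$, I would first compute
\[
\mathcal{L}V(x,u) = f(u)\,\mathcal{L}\CE_q(x,u) + \CE_q(x)\,\mathcal{L}f(u) + 2\Gamma(f,\CE_q).
\]
Because $\nabla_x f = 0$ and $\nabla_u \CE_q = 0$, the explicit formula for $\Gamma$ in \eqref{eqn:carre} gives $\Gamma(f,\CE_q) \equiv 0$, so the cross term disappears and only the two single-generator terms remain.

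Next I would insert the two hypothesized bounds. Because $f > 0$, I may multiply the first inequality of \eqref{eqn:lyapup} by $f(u)$ to get
\[
f(u)\,\mathcal{L}\CE_q(x,u) \leq -\bigl(qb(u) - \delta|b(u)| - \delta\bigr)\,f(u)\CE_q(x) + C_\delta(1+|b(u)|)f(u).
\]
Because $\CE_q > 0$, I may rewrite the second inequality as $\CE_q(x)\mathcal{L}f(u) \leq \bigl(qb(u)-\delta|b(u)|-\delta\bigr)\CE_q(x)f(u) - \rho\,\CE_q(x)f(u)$. Summing the two yields the announced bound, since the $(qb - \delta|b| - \delta)\,f\CE_q$ terms cancel exactly:
\[
\mathcal{L}V(x,u) \leq -\rho\,f(u)\CE_q(x) + C_\delta(1+|b(u)|)f(u) = -\rho V(x,u) + C_\delta(1+|b(u)|)f(u).
\]

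The lower-bound statement \eqref{eqn:lyaplow} is proved by running the exact same computation with every inequality reversed; the signs of the inserted $\pm \delta|b(u)|\pm \delta$ corrections are already arranged in the statement so that the cross terms again cancel algebraically. No new ingredient is required once the product rule and the vanishing of $\Gamma(f,\CE_q)$ are in hand.

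In short, there is no genuine obstacle: the content of the lemma is simply that the Leibniz decomposition plus separation of variables converts a multiplicative ansatz into an additive decoupling of the two inequalities. The only point that deserves care is keeping track of the signs in the lower-bound case to make sure the $(qb(u)+\delta|b(u)|+\delta)$ cross-terms cancel rather than reinforce each other, and verifying that the positivity assumptions $f>0$ and $\CE_q>0$ are indeed used when multiplying inequalities.
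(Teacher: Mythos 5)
Your proof is correct and follows essentially the same route as the paper: apply the Leibniz decomposition $\mathcal{L}(f\CE_q)=f\,\mathcal{L}\CE_q+\CE_q\,\mathcal{L}f+2\Gamma(f,\CE_q)$, observe $\Gamma(f,\CE_q)=0$ because the two factors depend on disjoint sets of variables, and substitute the hypothesized inequalities. The paper states the product rule directly without the $\Gamma$ cross term; your version just makes explicit why that cross term vanishes.
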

\begin{proof}
By the product rule,  the generator of $V(x,u)$ is 
\[
\mathcal{L}V(x,u)=\CE_q(x,u)[\mathcal{L}f(u)]+[\mathcal{L}\CE_q(x,u)]f(u)\quad (\text{or the  similar version with } g).
\]
We have the claim once the conditions are plugged in. 
\end{proof}
For our purpose, we will let $\CE_q=\frac{\|x\|^{q+2}}{1+\|x\|^2}+1$ as in \eqref{eqn:CEp}. This choice satisfies the requirement of Lemma \ref{lem:lyap}, and $\CE_q$ is equivalent to $\|x\|^q$ by Lemma \ref{lem:Eq}. We don't use $\|x\|^q$ directly because it is not $C^2$ when $q<2$. 

If we can find a regular $f$ that satisfies \eqref{eqn:lyapup}, we can show $\limsup_{t\to \infty}\E \|X_t\|^p$ is finite for $p<q$. Conversely, with a regular $g$ that satisfies \eqref{eqn:lyaplow}, we can show $\lim_{t\to \infty} \E \|X_t\|^p=\infty$ for $p>q$. This is proved by the following lemma:
\begin{lem}
\label{lem:gronwall}
Suppose there is a function $f$ that satisfies \eqref{eqn:lyapup} with a $\rho>0$, and
\[
\limsup_{t\to \infty}\E (1+|b(u_t)|)f(u_t)<M_0,\quad  \limsup_{t\to \infty}[\E f(u_t)^{-\frac1\alpha}]^{\alpha}<M_\alpha,
\]
 for any $\alpha>0$ with an appropriate $M_\alpha$, then 
\[
\limsup_{t\to\infty}\E\|X_t\|^p<\frac2\rho C_\delta M_0 M^{\frac pq}_{\frac{q-p}{p}},\quad \forall p<q.
\]
Conversely, suppose $\sigma_x> 0$ and there is a function $1\geq g>0$ that satisfies \eqref{eqn:lyaplow} with a $\rho>0$,  and
\[
\limsup_{t\to \infty}\E (1+|b(u_t)|)<M_0,
\]
then $\lim_{t\to\infty}\E\|X_t\|^p= \infty $  for any $p>q$. 
\end{lem}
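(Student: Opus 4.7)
The overall strategy is to apply Dynkin's formula and Gronwall-type inequalities to the product Lyapunov functions $V=f\CE_q$ (for the upper bound) and $U=g\CE_q$ (for the lower bound), and then to recover moment control of $X_t$ by combining Hölder's inequality with the equivalence $\|x\|^q \asymp \CE_q(x)$ from Lemma \ref{lem:Eq}.

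For the upper bound, I would first apply Dynkin's formula to $V=f\CE_q$ together with \eqref{eqn:lyapup} to obtain
$$
\frac{d}{dt}\E V(X_t,u_t) \leq -\rho\,\E V(X_t,u_t) + C_\delta\,\E(1+|b(u_t)|)f(u_t),
$$
so that Gronwall combined with $\limsup_t \E(1+|b|)f \leq M_0$ gives $\limsup_t \E V(X_t,u_t) \leq C_\delta M_0/\rho$. To descend from $V$ to $\|X_t\|^p$ I would write $\|x\|^p \leq 2^{p/q}\bigl(f(u)\CE_q(x)\bigr)^{p/q} f(u)^{-p/q}$ using $\|x\|^q \leq 2\CE_q(x)$ from Lemma \ref{lem:Eq}, and apply Hölder's inequality with conjugate exponents $(q/p,\,q/(q-p))$:
$$
\E\|X_t\|^p \leq 2^{p/q}\bigl(\E V(X_t,u_t)\bigr)^{p/q}\bigl(\E f(u_t)^{-p/(q-p)}\bigr)^{(q-p)/q}.
$$
With $\alpha=(q-p)/p$, the second factor is $\bigl([\E f^{-1/\alpha}]^{\alpha}\bigr)^{p/q}$, bounded by $M_\alpha^{p/q}$ in the limit, yielding the advertised finite bound on $\limsup_t\E\|X_t\|^p$.

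For the lower bound, Dynkin applied to $U=g\CE_q$ together with \eqref{eqn:lyaplow}, and the bounds $g\leq 1$ and $\limsup_t \E(1+|b(u_t)|)<M_0$, yields for $t$ large
$$
\frac{d}{dt}\E U(X_t,u_t) \geq \rho\,\E U(X_t,u_t) - C_\delta M_0.
$$
Solving this reverse Gronwall inequality gives $\E U(X_t,u_t)-C_\delta M_0/\rho \geq e^{\rho(t-s)}\bigl(\E U(X_s,u_s)-C_\delta M_0/\rho\bigr)$, so once $\E U$ exceeds the threshold $C_\delta M_0/\rho$ at some finite time, it must blow up exponentially. Since $g\leq 1$, blow-up of $\E U$ forces $\E\CE_q(X_t)\to\infty$, hence $\E\|X_t\|^q\to\infty$ by Lemma \ref{lem:Eq}, and then $\E\|X_t\|^p\to\infty$ for any $p>q$ by the power-mean inequality $\E\|X_t\|^p\geq(\E\|X_t\|^q)^{p/q}$.

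The hard part will be verifying that $\E U(X_t,u_t)$ actually reaches the threshold $C_\delta M_0/\rho$ at some finite time, which is precisely where the hypothesis $\sigma_x>0$ is essential. A clean way to close this is by contradiction: if $\E U(X_t,u_t)\leq C_\delta M_0/\rho$ held for all $t$, then $\E\CE_q(X_t)$ would be uniformly bounded, but a direct Itô estimate of $\E\|X_t\|^2$ (whose drift contains the positive contribution $\sigma_x^2 d_X$) together with the growth hypothesis on $b$ forces $\E\CE_q(X_t)$ to grow without bound. Alternatively, since $\CE_q\geq 1$ and $g>0$, the ergodicity of $u_t$ gives a long-time lower bound $\E g(u_t)\geq c_g>0$, which combined with the noise-driven spread of $\CE_q(X_t)$ suffices to push $\E U$ past the threshold. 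Either route reduces the lower bound to the upper-bound machinery already established.
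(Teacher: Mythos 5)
Your upper-bound argument is essentially the paper's: Dynkin applied to the exponentially-weighted product Lyapunov function, Gronwall to get $\limsup_t\E V \leq C_\delta M_0/\rho$, then H\"older with conjugate exponents $(q/p,\,q/(q-p))$ and the identification $\alpha=(q-p)/p$. That part is correct and matches.

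The lower bound is where your sketch has a genuine gap, and you have actually identified its location yourself (``the hard part''). The trouble is that neither of the two routes you propose closes it. Route (a) begins from the claim that $\E U(X_t,u_t)\leq C_\delta M_0/\rho$ for all $t$ would force $\E\CE_q(X_t)$ to be uniformly bounded. This is false: $U=g\CE_q$ with $g$ allowed to be arbitrarily small, so a bound on $\E[g\CE_q]$ gives no control on $\E\CE_q$ --- and indeed the whole point of the heavy-tail mechanism is that large $\|X_t\|$ tends to occur precisely where $g(u_t)$ is tiny. The follow-up (``a direct It\^o estimate of $\E\|X_t\|^2$ \ldots forces $\E\CE_q$ to grow without bound'') is also circular: whether $\E\CE_q(X_t)$ stays bounded or diverges is exactly the dichotomy in $p<q$ versus $p>q$ that the lemma is trying to establish, and the drift $-2\E b(u_t)\|X_t\|^2+\sigma_x^2 d_X$ by itself does not force divergence. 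Route (b) runs into a correlation problem: a lower bound $\E g(u_t)\geq c_g>0$ does \emph{not} yield a lower bound on $\E[g(u_t)\CE_q(X_t)]$, because $g(u_t)$ and $\CE_q(X_t)$ are strongly (negatively) correlated in this model.

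The paper closes the gap by a different and cleaner device: it conditions on time $t_0$ and uses the Markov property together with \emph{controllability} of the hypoelliptic system (this is precisely where $\sigma_x>0$ enters). From Dynkin with the weight $e^{-\rho t}$ one gets $\E^{x_0,u_0}U(X_t,u_t)\to\infty$ whenever $U(x_0,u_0)>C_\delta M_0/\rho$. Then
\[
\E U(X_{t_0+t},u_{t_0+t}) \geq \E\Big[\mathbf{1}_{\{U(X_{t_0},u_{t_0})>C_\delta M_0/\rho\}}\,\E^{X_{t_0},u_{t_0}} U(X_t,u_t)\Big],
\]
and controllability (cite MSH02) guarantees $\Prob\big(U(X_{t_0},u_{t_0})>C_\delta M_0/\rho\big)>0$, since $(X_{t_0},u_{t_0})$ charges every open ball. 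This makes the right-hand side diverge. Finally, instead of your route through $\E\CE_q\to\infty$ and a power-mean inequality, the paper bounds directly $\E U = \E[g\CE_q]\leq \E\CE_q\leq\E\|X_t\|^q+1\leq\E\|X_t\|^p+2$ for $p>q$, which is simpler. You should replace the hand-waving in your last paragraph with the Markov-property-plus-controllability step; without it the proof does not go through.
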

\subsection{Lower bound: constructive verification}
Based on Lemmas \ref{lem:lyap} and \ref{lem:gronwall}, in order to show that $\lim_{t\to\infty}\E \|X_t\|^q=\infty$  for a large $q$, it suffices to find a positive function $g\leq 1$ such that \eqref{eqn:lyaplow} holds.

Let $\eta=q^{-1}\log g$, it is well defined.  Then by the chain rule formula \eqref{eqn:chain},   \eqref{eqn:lyaplow} is equivalent to 
\begin{align*}
(qb(u)+\delta|b(u)|+\delta+\rho)\exp(q\eta)&\leq \mathcal{L}\exp(q\eta)\\
&=q \exp(q\eta)\langle h, \nabla_u \eta\rangle+\frac12 \exp(q\eta)( q \text{tr}(\nabla_u^2 \eta)+q^2\|\nabla_u \eta\|^2 ).
\end{align*}
In other words, we need to find an $\eta\leq 0$ such that 
\begin{equation}
\label{eqn:etalow}
\left(\text{tr}(\nabla_u^2 \eta)+q\|\nabla_u \eta\|^2\right)+2\langle h,\nabla_u \eta\rangle\geq 2b+2q^{-1}(\delta+\delta|b(u)|+\rho).
\end{equation}
This can be done by an explicit construction, as long as $b$ and $h$ are regular as in Assumption \ref{aspt:regular}.
\begin{lem}
\label{lem:unstablelower}
Suppose $b(u^*)<0$.  Under Assumption \ref{aspt:regular}, by choosing a sufficiently small $c>0$ and sufficiently large $q$, \eqref{eqn:etalow} holds with 
\[
\eta(u)=-c\|u-u^*\|^{m}\leq 0. 
\]
\end{lem}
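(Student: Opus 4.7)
The plan is to substitute $\eta(u) = -c\|u-u^*\|^m$ directly into \eqref{eqn:etalow}, evaluate the derivatives in closed form, and then show that by first choosing $c$ small and then $q$ large, the inequality holds pointwise in $u$. Setting $v := u-u^*$, and using $\nabla\|v\|^m = m\|v\|^{m-2}v$ together with $\operatorname{tr}(\nabla^2 \|v\|^m) = m(m-2+d_u)\|v\|^{m-2}$, the LHS of \eqref{eqn:etalow} becomes
\[
qc^2 m^2 \|v\|^{2m-2} \;-\; cm(m-2+d_u)\|v\|^{m-2} \;-\; 2cm\|v\|^{m-2}\langle h(u), v\rangle.
\]
The decisive term is the first one: it scales as $qc^2$ and in $\|v\|^{2m-2}$, a degree that (since $m \geq 2$) is at least as large as anything appearing on the RHS via Assumption~\ref{aspt:regular}. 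Crucially, $q$ enters this gain linearly but only enters the RHS as the decaying $q^{-1}(\delta+\delta|b(u)|+\rho)$, so making $q$ large helps both sides at once.

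For the local analysis, by continuity of $b$ there is a ball $\{\|v\|\leq R\}$ on which $b(u) \leq b(u^*)/2 < 0$, and on this ball Assumption~\ref{aspt:regular} makes $|b(u)|$, $\|h(u)\|$, and $\|v\|^{m-2}$ all bounded by explicit constants. Choose $c$ small enough that the two negative contributions $cm(m-2+d_u)\|v\|^{m-2}$ and $2cm\|v\|^{m-2}|\langle h,v\rangle|$ are each at most $|b(u^*)|/4$ on this ball (when $m=2$, this forces $c < |b(u^*)|/(4d_u)$ to deal with the constant-in-$v$ trace term). Then take $q$ large enough that $2q^{-1}(\delta+\delta\sup_{\|v\|\leq R}|b(u)|+\rho) \leq |b(u^*)|/2$. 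The LHS is then $\geq -|b(u^*)|/2 \geq b(u^*)$, while the RHS is $\leq b(u^*) + |b(u^*)|/2 = b(u^*)/2 \leq -|b(u^*)|/2$, so \eqref{eqn:etalow} holds on $\{\|v\|\leq R\}$.

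For $\|v\|>R$, Assumption~\ref{aspt:regular} gives $|b(u)|\leq |b(u^*)|+M_{u^*}(\|v\|+\|v\|^m)$ and $\|h(u)\|\leq C'(\|v\|^{m-1}+1)$, so the RHS of \eqref{eqn:etalow} is dominated by $K_R\|v\|^m$ and the harmful term $2cm\|v\|^{m-2}|\langle h,v\rangle|$ is dominated by $K'_R c\|v\|^{2m-2}$, where $K_R,K'_R$ depend on $c,R$ but not on $q$. Since $2m-2\geq m$, a lower bound of the shape
\[
\text{LHS}-\text{RHS} \;\geq\; \|v\|^{2m-2}\bigl(qc^2m^2 - K'_R c - K_R\|v\|^{-(2m-2-m)}\bigr) - K''_R q^{-1}
\]
holds, and taking $q$ large enough (with $c$ already fixed) makes the parenthesis positive uniformly in $\|v\|\geq R$, closing the proof. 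The main obstacle is the tension between the two regimes: small $c$ is essential near $u^*$ but weakens the gain $qc^2m^2$ far away. This is resolved because $q$ is chosen \emph{after} $c$ is fixed and may be taken arbitrarily large; the only $q$-dependent cost on the RHS is the harmless $O(q^{-1})$ term.
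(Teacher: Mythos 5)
Your proof is correct and uses the same closed-form computation of $\nabla_u\eta$, $\nabla_u^2\eta$ and the polynomial bounds from Assumption~\ref{aspt:regular} as the paper, but finishes by a genuinely different route. The paper collapses everything into a single global polynomial inequality in $\|u-u^*\|$ and dispatches it with one appeal to Young's inequality together with $m\geq 2$, leaving the order of quantifiers implicit. You instead split into a near regime $\|v\|\leq R$ (where $b(u)\leq b(u^*)/2<0$ supplies negativity directly, so the gain $qc^2m^2\|v\|^{2m-2}$ is unneeded and you only need the parasitic terms controlled by small $c$ and large $q$) and a far regime $\|v\|>R$ (where the $qc^2m^2\|v\|^{2m-2}$ term dominates because its degree $2m-2$ is maximal, once $q$ is large with $c$ fixed). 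The case split makes the quantifier order --- fix $c$ first, then choose $q$ --- fully explicit and is arguably a clearer account of where each of the two tunable parameters is used. Two small arithmetic slips do not affect the argument: for $m=2$ the trace contribution is $2cd_u$, so the threshold should be $c<|b(u^*)|/(8d_u)$ rather than $|b(u^*)|/(4d_u)$; and in the far-field display, the $q$-independent constant $-2b(u^*)>0$ only helps, so folding it into the $-K''_R q^{-1}$ remainder makes your lower bound an underestimate --- still sufficient since the bracketed coefficient grows without bound and $K''_R q^{-1}\to 0$ as $q\to\infty$.
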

\begin{proof}
By our assumption, there is an $M$ such that
\[
b(u)\leq b(u^*)+M \|u-u^*\|+M\|u-u^*\|^{m}, \quad \forall u,
\]
and $|b(u)|\leq |b(u^*)|+M \|u-u^*\|+M\|u-u^*\|^{m}$. 
Then notice that 
\[
\text{tr}(\nabla^2_u \eta)=-m(m-2+d_u)c\|u-u^*\|^{m-2}\geq 0,
\]
\[
\nabla_u \eta= -mc(u-u^*)\|u-u^*\|^{m-2},\quad \|\nabla_u \eta\|^2=m^2c^2\|u-u^*\|^{2m-2}.
\]
Under Assumption \ref{aspt:regular}, by Cauchy Schwarz inequality, we can increase $M$ such that
\[
\langle h, \nabla_u \eta\rangle \geq -cM \|u-u^*\|^{2m-2}-cM\|u-u^*\|^{m-1}.
\]
So in combine, to show \eqref{eqn:etalow} it suffices to show, 
\begin{align*}
&(q m^2c^2-2cM)\|u-u^*\|^{2m-2}+(-2b(u^*)-2q^{-1}(\delta+\delta |b(u^*)|+\rho))\\
&\geq m(m-2+d_u)c\|u-u^*\|^{m-2}+2cM\|u-u^*\|^{m-1}\\
&\qquad +M(1+2q^{-1}\delta)\|u-u^*\|^m+(1+2q^{-1}\delta)M\|u-u^*\|. 
\end{align*}
By Young's inequality and $m\geq 2$,  this can be achieved by a sufficiently large $q$ and small $c$.
\end{proof}

\subsection{Upper bound: solution from the Feynman Kac formula}
To find a $f$ that satisfies \eqref{eqn:lyapup},  we let $\theta=q^{-1}\log f$. Then similar to the derivation of  \eqref{eqn:etalow}, we find that \eqref{eqn:lyapup} is equivalent to 
\begin{equation}
\label{eqn:theta}
\mathcal{L}\theta(u)\leq \btilde(u)-q^{-1}(\rho+\delta) -\frac{1}{2}q\|\nabla_u \theta(u)\|^2,\quad \btilde(u):=b(u)-q^{-1}\delta|b(u)|.
\end{equation}
Directly solving \eqref{eqn:theta} is challenging, since it involves a nonlinear term $\|\nabla_u \theta(u)\|^2$. 
Here the idea is that we look for $\theta$ that is Lipschitz, so with a certain constant $M$,  $\frac{1}{2}\| \nabla_u \theta(u)\|^2\leq M$. Then for \eqref{eqn:theta} to hold, it suffices to solve a linear problem:
\[
\mathcal{L}\theta(u)\leq \btilde(u)-q^{-1}(\rho+\delta) -qM. 
\]
To solve this, we recall the formula for  Cauchy problems. Given a specific $\btilde(u)$, the solution of 
\[
\mathcal{L}\theta(u)=\btilde(u)
\]
exists if  and only if $\langle \pi, \btilde\rangle=0$,  and $\theta$ is  given by the following Feynman Kac's formula
\[
\theta(u)=-\int^\infty_0 \E^u \btilde(u_t) \rmd t. 
\]
$\E^u$ here denotes the conditional expectation with $u_0=u$. For self-completeness, we verify this fact in Lemma \ref{lem:cauchy}. For our purpose, it is natural to try
\begin{equation}
\label{eqn:buildtheta}
\theta(u)=-\int^\infty_0 \E^u (\btilde(u_t)-\langle \pi, \btilde\rangle) \rmd t.
\end{equation}
Then to verify \eqref{eqn:theta}, we simply need 
\[
q^{-1}(\rho+\delta)+\frac{1}{2}q\|\nabla_u \theta(u)\|^2  \leq \langle \pi, \btilde\rangle. 
\]
Note that $\rho+\delta$ can be an arbitrarily small positive number, and 
\[
\langle \pi, \btilde\rangle=\langle \pi, b\rangle-q^{-1}\delta\langle \pi, |b|\rangle>0
\] 
by our assumption, so it suffices to verify that $\|\nabla_u \theta(u)\|$ is bounded globally. We would assume the following assumption for $u_t$. 

\begin{lem}
\label{lem:linear}
Assume that $u_t$ satisfies the asymptotic Lipschitz contraction, and define $\theta$ as in \eqref{eqn:buildtheta}, then 
\[
\|\nabla_u \theta(u)\|\leq C_\gamma \gamma^{-1}\|\btilde\|_{Lip}.
\]
And if $\btilde=b-q^{-1}\delta |b|$, then $\|\btilde\|_{Lip}\leq (1+q^{-1}\delta)\|b\|_{Lip}$. 
\end{lem}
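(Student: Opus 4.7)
The plan is to show $\theta$ is globally Lipschitz with constant $C_\gamma\gamma^{-1}\|\btilde\|_{Lip}$, from which the pointwise bound on $\|\nabla_u \theta(u)\|$ follows (directly if $\theta\in C^1$, which is the regime in which the PDE \eqref{eqn:theta} is being used, and otherwise a.e.\ via Rademacher's theorem). The key input is the Kantorovich-Rubinstein dual representation of the Wasserstein-$1$ distance: for any Lipschitz $\varphi$,
\[
|\E^u\varphi(u_t)-\E^v\varphi(u_t)|\leq \|\varphi\|_{Lip}\,d(P_t^u,P_t^v).
\]

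To exploit this, I subtract the two expressions for $\theta$: the constant $\langle \pi,\btilde\rangle$ drops out and leaves
\[
\theta(u)-\theta(v)=-\int_0^\infty \bigl(\E^u \btilde(u_t)-\E^v \btilde(u_t)\bigr)\,\rmd t.
\]
Bounding the integrand by the duality above and then applying the asymptotic contraction $d(P_t^u,P_t^v)\leq C_\gamma e^{-\gamma t}\|u-v\|$ majorizes it by $\|\btilde\|_{Lip}\,C_\gamma e^{-\gamma t}\|u-v\|$, which is absolutely integrable in $t$. This simultaneously justifies swapping integration with differences, confirms that the integral defining $\theta$ in \eqref{eqn:buildtheta} converges after the ergodic mean is subtracted, and delivers
\[
|\theta(u)-\theta(v)|\leq C_\gamma\gamma^{-1}\|\btilde\|_{Lip}\|u-v\|,
\]
which is precisely the claimed gradient estimate.

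For the Lipschitz bound on $\btilde=b-q^{-1}\delta|b|$, the reverse triangle inequality $||a|-|b||\leq|a-b|$ combined with the ordinary triangle inequality gives
\[
|\btilde(u)-\btilde(v)|\leq |b(u)-b(v)|+q^{-1}\delta\bigl||b(u)|-|b(v)|\bigr|\leq(1+q^{-1}\delta)\|b\|_{Lip}\|u-v\|,
\]
as asserted.

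There is no serious obstacle here: the argument is a clean fusion of Kantorovich-Rubinstein duality with the contractivity of $u_t$, and is robust enough that neither explicit knowledge of $h$ nor fine properties of $\btilde$ beyond being Lipschitz are needed. The only subtlety worth flagging is that the formal integral \eqref{eqn:buildtheta} must be well-defined in the first place, and this is handled automatically by the same exponential decay estimate used above (taking $v$ to be any fixed reference point shows $|\E^u\btilde(u_t)-\langle\pi,\btilde\rangle|$ decays exponentially in $t$).
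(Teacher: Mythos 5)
Your proof is correct and uses the same core mechanism as the paper: Kantorovich--Rubinstein duality for Wasserstein-$1$ combined with the exponential contraction estimate, followed by integration in $t$. The only cosmetic difference is that you bound $|\theta(u)-\theta(v)|$ directly to get a Lipschitz constant and then pass to the gradient, whereas the paper differentiates under the integral sign first and bounds $\|\nabla_u\mathbb{E}^u\btilde(u_t)\|$ pointwise in $t$; your order of operations is slightly cleaner on the regularity side but the content is identical.
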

\begin{proof}
Note that 
\begin{align*}
\|\nabla_u\mathbb{E}^u \btilde(u_t)\|&=\sup_{\|v\|=1}\lim_{\epsilon\to 0}\epsilon^{-1}[\mathbb{E}^{u+\epsilon v} \btilde(u_t)-\mathbb{E}^u \btilde(u_t)]\\
&\leq \sup_{\|v\|=1}\lim_{\epsilon\to 0}\epsilon^{-1}\|\btilde\|_{Lip} d(P^{u+\epsilon v}_t, P_t^u)\\
&\leq \sup_{\|v\|=1}\lim_{\epsilon\to 0}\|\btilde\|_{Lip} C_\gamma \|v\|\exp(-\gamma t)=C_\gamma\|\btilde\|_{Lip}\exp(-\gamma t). 
\end{align*}
Therefore
\begin{equation}
\label{eqn:upper}
\left\|\nabla_u \theta (u)\right\|=\bigg\|\int^\infty_0 \nabla_u \mathbb{E}^u \btilde(u_t)\rmd t\bigg\|\leq C_\gamma \|\btilde\|_{Lip}\int^\infty_0 \exp(-\gamma t)\rmd t=C_\gamma\gamma^{-1}\|\btilde\|_{Lip}. 
\end{equation}
Finally note that $\btilde$ is Lipschtiz as long as $b$ is:
\[
|\btilde(x)-\btilde(y)|\leq |b(x)-b(y)|+q^{-1}\delta||b(x)|-|b(y)||\leq (1+q^{-1}\delta)|b(x)-b(y)|. 
\]
\end{proof}

\subsection{Example: unforced SPEKF}
\label{sec:bivariate}
The general discussion above may look technical in the first reading. Here we explain the intuition using a simple example. Consider the unforced SPEKF model \eqref{eqn:2OU} with an affine damping function 
\begin{equation}
\label{sys:1dim}
\begin{gathered}
\rmd X_t=-b(u_t+m_u) X_t \rmd t+ \rmd W_t,\\
\rmd u_t=-\gamma u_t \rmd t+\rmd B_t.
\end{gathered}
\end{equation}
For simplicity, we also only consider moments of order $q\geq 2$, so the generator can be directly apply to $|x|^q$. This eliminates the perturbation terms of order $\delta$ in Lemma \ref{lem:Eq} and the followup discussion.  The $\theta(u)$ in Lemma \ref{lem:cauchy}, by letting $\delta=0$, is given by
\[
\theta(u)=-b \int^\infty_0 \rmd t \E^u  u_t  =  -bu\int^\infty_0\exp(-\gamma t)\rmd t=-\frac{b u}{\gamma}. 
\]
From this, we see that Lemma \ref{lem:linear} is sharp, since $u_t$ is asymptotically contractive with $C_\gamma=1$, and $\gamma=\gamma$. This suggests us to use $f(u):=\exp( -qb u/\gamma)$ in Lemma \ref{lem:lyap}. If fact $g$ can be chosen as the same. Simply note that
\[
\mathcal{L} f(u)=(qb u  +\tfrac12 q^2 b^2/\gamma^2) f(u).
\]
If we let $V(x,u)=|x|^q f(u)$ with $q\geq 2$, note that 
\[
\mathcal{L} V(x,u)=|x|^q\mathcal{L} f(u)+\mathcal{L} |x|^q f(u),\quad \mathcal{L}|x|^q=-qb(u+m_u)|x|^q+\frac12q(q-1)|x|^{q-2},
\]
so
\begin{align*}
 \mathcal{L} V(x,u)= (\tfrac12 q^2 b^2/\gamma^2-qbm_u ) V(x,u)+\frac12q(q-1)|x|^{q-2} f(u). 
\end{align*}
By Young's inequality, for any $\delta>0$, there is a $C_\delta>0$ so that
\[
(\tfrac12 q^2 b^2/\gamma^2-qbm_u -\delta) V(x,u)-C_\delta f(u)\leq  \mathcal{L} V(x,u)\leq (\tfrac12 q^2 b^2/\gamma^2-qbm_u +\delta) V(x,u)+C_\delta f(u). 
\]
By Lemma \ref{lem:gronwall}, this means that $\limsup_{t\to \infty}\|X_t\|^q$ is infinite if $q>q_0$, and is finite if $q<q_0$. The threshold here is given by
\[
q_0=\frac{2m_u\gamma^2}{b}.
\]

\subsection{Example: large deviation bound}
\label{sec:LDP}
As another example, we demonstrate how to apply our framework to show the deviation of long time average of $u_t$ is sub-Gaussian.
\begin{cor}
\label{cor:LDP}
Assume that $u_t$ is asymptotically contractive, and it is exponentially integrable, that is
\[
\limsup_{t\to \infty}\E \exp(\alpha \|u_t\|)<\infty,\quad \forall \alpha. 
\]
Then  for any $\delta>0$, the following large deviation bound holds for certain $M_\delta$
\[
\mathbb{P}\left(\frac{1}{t}\int^t_0 b(u_s)\rmd s-\langle \pi, b\rangle>D_M c\right)\leq M_\delta \exp\left(-(\tfrac12 c^2-\delta) D_M t\right),\quad \forall t>0,
\] 
where $D_M=C_\gamma^2\gamma^{-2}\|b\|^2_{Lip}$. 
\end{cor}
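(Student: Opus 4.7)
The plan is to embed the deviation $D_t$ into the framework \eqref{sys:dyd}. Following \eqref{eqn:XtLDP}, set $X_t:=\exp\bigl(\int_0^t(b(u_s)-\langle\pi,b\rangle)\rmd s\bigr)$; then $(X_t,u_t)$ is an instance of \eqref{sys:dyd} with $\sigma_x=0$ and damping $\langle\pi,b\rangle-b(u)$. Markov's inequality converts moment bounds into tail bounds:
$$
\Prob\bigl(D_t>D_M c\bigr)=\Prob\bigl(X_t^p>e^{pD_Mct}\bigr)\leq \E X_t^p\cdot e^{-pD_Mct},\qquad p>0.
$$
It thus suffices to establish $\E X_t^p\lesssim \exp\bigl(\tfrac12 p^2 D_M t\bigr)$ up to a prefactor mild enough to be absorbed by the $\delta$-slack; optimizing at $p=c$ then yields the Gaussian rate $-\tfrac12 c^2 D_M t$.

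\textbf{Lyapunov bound via the Feynman-Kac potential $\theta$.} Define $\theta(u):=-\int_0^\infty \E^u\bigl(b(u_r)-\langle\pi,b\rangle\bigr)\rmd r$ exactly as in \eqref{eqn:buildtheta} (with $\delta=0$ and $\btilde=b-\langle\pi,b\rangle$). Lemma \ref{lem:cauchy} gives $\mathcal{L}\theta=b-\langle\pi,b\rangle$, and the asymptotic contraction combined with Lemma \ref{lem:linear} gives $\|\nabla_u\theta\|^2\leq D_M$ pointwise. Taking the product test function $V(x,u):=x^p e^{-p\theta(u)}$ and applying the chain rule \eqref{eqn:chain}, a direct calculation yields
$$
\mathcal{L}V(x,u)=\tfrac12 p^2\|\nabla_u\theta(u)\|^2\, V(x,u)\leq \tfrac12 p^2 D_M\, V(x,u),
$$
because the drift $-p(\langle\pi,b\rangle-b)V$ produced by $\mathcal{L}$ acting on $x^p$ cancels exactly the drift $-p\,\mathcal{L}\theta\cdot V=-p(b-\langle\pi,b\rangle)V$ coming from applying the chain rule to $e^{-p\theta}$, leaving only the quadratic variation term. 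Dynkin's formula together with $X_0=1$ then gives
$$
\E\bigl[X_t^p\,e^{-p\theta(u_t)}\bigr]\leq e^{-p\theta(u_0)+\tfrac12 p^2 D_M t}.
$$
To recover $\E X_t^p$ I would apply H\"older's inequality with conjugate exponents $(q,r)$, $q>1$:
$$
\E X_t^p\leq \bigl(\E\bigl[X_t^{pq}e^{-pq\theta(u_t)}\bigr]\bigr)^{1/q}\bigl(\E e^{pr\theta(u_t)}\bigr)^{1/r}.
$$
The first factor, by the Lyapunov estimate applied at moment $pq$, is bounded by $\exp\bigl(-p\theta(u_0)+\tfrac12 p^2 q\,D_M t\bigr)$; the second is finite uniformly in $t$ because $|\theta(u)|\leq\sqrt{D_M}\|u-u_*\|$ and the hypothesis provides $\limsup_{t\to\infty}\E\exp(\alpha\|u_t\|)<\infty$ for every $\alpha>0$. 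Plugging into Markov and choosing $p=c/q$ produces the exponent $-\tfrac12 c^2 D_M t/q$; taking $q$ close enough to $1$ converts this into $-(\tfrac12 c^2-\delta)D_M t$, and the residual $u_0$- and $q$-dependent prefactor is collected into $M_\delta$.

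\textbf{Main obstacle.} The delicate step is extracting the sharp Gaussian constant $\tfrac12$ in spite of the unboundedness of $e^{p\theta(u_t)}$. A crude Cauchy-Schwarz would split the two factors with equal powers and double the variance term to $p^2 D_M t$, destroying the rate. The H\"older trick with $q\downarrow 1$ recovers the correct prefactor $\tfrac12$ at the cost of a multiplicative constant that grows as $q$ approaches $1$; the $\delta$-slack in the statement is precisely what is needed to absorb this deterioration.
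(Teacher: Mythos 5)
Your proposal matches the paper's proof essentially step for step: same exponential change of variable, same product Lyapunov function $x^p e^{\mp p\theta}$ exploiting the exact cancellation $\mathcal{L}x^p + x^p\mathcal{L}e^{\mp p\theta}$ leaving only the quadratic-variation term $\tfrac12 p^2\|\nabla_u\theta\|^2$, same Dynkin/Gronwall bound, same H\"older step to strip off $e^{\pm p\theta(u_t)}$, and the same Markov-plus-optimization finish (the paper's choice $p=c$, $q=c+2\delta/c$ is just a reparametrization of your $p=c/q$ with $q\downarrow 1$). The only cosmetic discrepancy is the sign convention for $\theta$ (you use the convention of \eqref{eqn:buildtheta}, the paper's proof flips it); everything else, including your observation that Cauchy--Schwarz would wreck the constant $\tfrac12$, is consistent with what the paper does.
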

\begin{proof}
Consider $X_t=\exp\left(\int^t_0 (b(u_s)-\langle \pi, b\rangle) ds \right)$ as defined in \eqref{eqn:XtLDP}, it follows the ordinary differential equation:
\[
dX_t=(b(u_t)-\langle \pi, b\rangle)X_tdt.
\]
Since there is no diffusion term in $X_t$, we consider the function
\[
V(x,u):=x^q\exp(q \theta (u)),
\]
where $\theta(u)=\int^\infty(\E^ub(u_t)-\langle \pi, b\rangle)dt$  satisfies the following by Lemmas \ref{lem:cauchy} and \ref{lem:linear}
\[
\mathcal{L}\theta=\langle \pi, b\rangle-b,\quad  \|\nabla_u \theta\|\leq C_\gamma \gamma^{-1}\|b\|_{Lip}.
\]
Applying the generator to $V$, we find
\begin{align*}
\mathcal{L} V(x,u)&=(\mathcal{L} x^q) \exp(q\theta(u))+x^q \mathcal{L} \exp(q\theta(u))\\
&= \left(q (b-\langle \pi, b\rangle) +q \mathcal{L}\theta+\frac12 q^2\|\nabla_u \theta\|^2\right)V(x,u)\leq \frac12 q^2 D_M V(x,u).
\end{align*}
So by Dynkin's formula,
\[
\E X^q_t\exp(q \theta (u_t))\leq \exp\left(\frac 12 q^2 t D_M\right)\E \exp(q \theta (u_0)). 
\]
For any $p<q$, by H\"{o}lder's inequality, 
\begin{align*}
\E X_t^p&\leq \bigg(\E X^q_t\exp(q \theta (u_t))\bigg)^{\frac{p}{q}}\bigg(\mathbb{E}\exp(p \theta (u_t))^{\frac{-q}{q-p}}\bigg)^{\frac{q-p}{q}}\\
&\leq \exp( \tfrac12qp t D_M) \left(\E \exp(q \theta (u_0)\right)^\frac{p}{q}\bigg(\mathbb{E}\exp(p \theta (u_t))^{\frac{-q}{q-p}}\bigg)^{\frac{q-p}{q}}\leq  \exp( \tfrac12qp t D_M)M_{p,q}.
\end{align*}
The constant $M_{p,q}$ exists because we assume $\limsup_{t\to \infty}\E \exp(\alpha \|u_t\|)<\infty$, and $\theta$ is Lipschitz. Therefore if we let
\[
D_t:=\frac{1}{t}\int^t_0 b(u_s)\rmd s-\langle \pi, b\rangle,
\]
then
\[
\Prob(D_t \geq D_Mc) \leq \frac{\E \exp(t p D_t)}{ \exp(p t D_Mc)}=\frac{\E X_t^p}{ \exp(p D_Mt c)}\leq \exp( (\tfrac12q-c) p t D_M)M_{p,q}.
\]
We pick $p=c,q=c+\frac{2\delta}{c}$ and find our claim.
\end{proof}

\section{Exponential tails from  nonnegative dampings}
\label{sec:exp}
This section shows that nonnegative dampings  lead to exponential or weaker tails. 
\begin{thm}
\label{thm:exponential}
Suppose the following hold:
\begin{itemize}
\item $u_t$ is asymptotically contractive.
\item  The damping function satisfies $b(u)\geq 0$ and $b(u)=0$ when $\|u-u_*\|\leq \epsilon$ for some $u_*$.  Also there is a Lipschitz function $b'$ such that $b\geq b'\geq 0$. 
\item The dynamics of $u_t$ dissipates the energy centered at $u_*$, that is there are $\lambda, M_\lambda>0$ so that 
\end{itemize}
\[
\langle u-u_*, h(u)\rangle \leq -\lambda \|u-u_*\|^2+M_\lambda.
\]
Then $X_t$ has exponential-like tails. In particular
\[
\lim_{p\to \infty}\lim_{t\to \infty}\frac{\log \mathbb{E}\|X_t\|^{2p}}{p\log p}= 2.
\]
\end{thm}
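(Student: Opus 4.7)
The claim splits into matching upper and lower bounds on the limit; both can be reduced to tail estimates on the scalar conditional-variance quantity
\[
S_t := \int_0^t\exp\left(-2\int_s^t b(u_r)\,dr\right)ds,
\]
by exploiting the conditional Gaussianity of the $X$-equation given $u_{[0,t]}$. Since \eqref{sys:dyd} is linear in $X$ conditional on $u_\cdot$, $X_t$ is Gaussian with mean $e^{-\int_0^t b(u_r)dr}X_0$ and covariance $\sigma_x^2 S_t\,I$. Because $\langle\pi,b\rangle>0$ (using $b\geq b'\geq 0$, the fact that $b'$ can be chosen non-trivially Lipschitz, and the full support of $\pi$), the mean decays to zero as $t\to\infty$, so
\[
\mathbb{E}\|X_t\|^{2p}=c_{p,d_X}\sigma_x^{2p}\mathbb{E}S_t^p+o(1),
\]
where $c_{p,d_X}$ is the $2p$th moment of a $d_X$-dimensional standard Gaussian, satisfying $\log c_{p,d_X}=p\log p+O(p)$ by Stirling. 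The conclusion reduces to $\log\mathbb{E}S_\infty^p=p\log p+O(p)$, i.e.\ matching exponential upper and lower tails of $S_\infty$.

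For the \emph{upper bound}, I bound $S_t\leq\tilde S_t:=\int_0^t e^{-2\int_s^t b'(u_r)dr}ds$ (either by $b\geq b'$ pointwise or via Proposition \ref{prop:compare}). Corollary \ref{cor:LDP} applied to the Lipschitz, nonnegative $b'$ with $\langle\pi,b'\rangle>0$ yields a deviation bound from which one derives
\[
\mathbb{E}\exp\left(-2k\int_0^r b'(u_s)\,ds\right)\leq Ce^{-c_\infty r},\quad r>0,
\]
for every $k\geq 1$, with $c_\infty>0$ depending only on $\langle\pi,b'\rangle$, $\|b'\|_{\mathrm{Lip}}$ and the contraction rate of $u_t$; crucially, the rate is uniform in $k$ because $b'\geq 0$, so increasing $k$ only strengthens the exponent. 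Expanding $\tilde S_\infty^p=p!\int_{0<r_1<\cdots<r_p}\prod_i Y_{r_i}\,d\vec r$ with $Y_r=e^{-2\int_0^r b'ds}$ and regrouping the exponent as $\prod_{j=1}^p\exp(-2(p-j+1)\int_{r_{j-1}}^{r_j}b'ds)$, successive conditioning via the strong Markov property together with the exponential integrability of $u_t$ (from the energy-dissipation assumption) gives
\[
\mathbb{E}\prod_iY_{r_i}\leq C^p\prod_j e^{-c_\infty(r_j-r_{j-1})},
\]
and integrating in the increments $\Delta r_j=r_j-r_{j-1}$ yields $\mathbb{E}\tilde S_\infty^p\leq p!(C/c_\infty)^p$, so $\log\mathbb{E}S_\infty^p\leq p\log p+O(p)$.

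For the \emph{lower bound}, consider the dwell event $E_\tau:=\{u_r\in B(u_*,\epsilon), \forall r\in[t-\tau,t]\}$. On $E_\tau$ one has $b(u_r)\equiv 0$ on $[t-\tau,t]$, hence $S_t\geq\tau$. The energy-dissipation hypothesis yields a Khas'minskii-type Lyapunov estimate for the $u$-diffusion killed at $\partial B(u_*,\epsilon)$: with $\phi(u)=\|u-u_*\|^2$, the killed semigroup has a finite principal killing rate, and combined with the stationarity of $u_t$ one obtains $\mathbb{P}(E_\tau)\geq c_0 e^{-c_1\tau}$ for $\tau\leq t/2$ and $t$ large, with constants independent of $t$. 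Therefore
\[
\mathbb{E}S_t^p\geq\int_0^{t/2}\tau^p c_0 c_1 e^{-c_1\tau}\,d\tau\xrightarrow{t\to\infty}c_0\,\Gamma(p+1)/c_1^p,
\]
so $\log\mathbb{E}S_\infty^p\geq p\log p+O(p)$. Matching with the upper bound and the Gaussian factor $c_{p,d_X}$ proves the theorem.

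The main obstacle is the uniform-in-$k$ exponential bound on $\mathbb{E}\exp(-2k\int_0^r b'ds)$ and the Markov telescoping needed in the upper bound. The uniform-in-$k$ claim is obtained by splitting on $\{\int_0^r b'\geq c_0 r\}$ for some $c_0<\langle\pi,b'\rangle$ (its complement has probability $\leq M_\delta e^{-c_1 r}$ by Corollary \ref{cor:LDP}, a $k$-independent decay), so that the combined rate $\min(2c_0,c_1)$ does not degenerate with $k$; propagating the estimate across the $p$-step Markov chain then requires uniform-in-initial-condition exponential moments of $u_t$, which follow from the energy-dissipation assumption through the Lyapunov function $\phi$. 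Both the Khas'minskii lower bound and the $k$-independent large-deviation rate rely crucially on the three structural hypotheses of the theorem acting in concert.
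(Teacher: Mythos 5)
Your reduction (conditional Gaussianity, so the problem is $\log\mathbb{E}S_t^p$ with $S_t=\int_0^t A_{s,t}^2\,ds$) coincides with the opening step of the paper's proof of Proposition~\ref{prop:weakinter}. After that, both halves of your argument diverge from the paper in interesting ways.

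Your \emph{lower bound} via the dwell event is a genuinely different and cleaner route than the paper's $\mathcal{A}_m$ construction (Definition~\ref{defn:Am}, Lemma~\ref{lem:weakdampgen}, Lemma~\ref{lem:dissi}). The paper builds a hierarchy $g_1,\dots,g_m$ to show $\mathbb{E}\exp(-2p\int_0^t b)\geq \exp(-2p^{1/2^m}Mt+O(\sqrt p))$ and then sends $m\to\infty$, which yields the limit $2$ only after a double limit. Because in Theorem~\ref{thm:exponential} the function $b$ is \emph{exactly} zero on a ball, your dwell-event argument bypasses this machinery: on $E_\tau$ the factor $A_{s,t}^2\equiv 1$ for $s\in[t-\tau,t]$, so $S_t\geq\tau$ with probability $\geq c_0e^{-c_1\tau}$ (a standard survival-probability lower bound for a nondegenerate diffusion in a ball), and optimizing $\tau\sim p/c_1$ gives $\log\mathbb{E}S_t^p\geq p\log p+O(p)$ directly — a sharper statement that does not degrade with an auxiliary parameter $m$. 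The trade-off is that the $\mathcal{A}_m$ machinery still covers case~(iii) of Theorem~\ref{thm:simple}, where $b$ only touches zero at a point and there is no positive-measure dwell set.

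Your \emph{upper bound}, however, has a real gap in the Markov-telescoping step, which you partly acknowledge. The inequality $\mathbb{E}\prod_iY_{r_i}\leq C^p\prod_je^{-c_\infty(r_j-r_{j-1})}$ requires, at each conditioning step, a bound $\mathbb{E}^u\exp(-2k\int_0^s b'(u_r)\,dr)\leq C(u)e^{-c_\infty s}$ that is uniform in $k$ \emph{and} propagates through the chain; the latter forces you to control the state-dependent prefactor $C(u)$ along the whole telescoping. This is not merely ``exponential integrability of $u_t$'' — what you actually need is a weighted contraction of the Feynman--Kac semigroup, $\mathbb{E}^u\bigl[\exp(-2k\int_0^s b')V(u_s)\bigr]\leq C_0e^{-c_\infty s}V(u)$ for a Lyapunov weight $V$, which is precisely the content of the paper's product-Lyapunov construction in Lemmas~\ref{lem:lyap} and \ref{lem:linear} (where $V=\exp(q\theta)$, $\theta$ built by Feynman--Kac in \eqref{eqn:buildtheta}) and is what Proposition~\ref{prop:upperbound} packages. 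As written, the step ``propagating the estimate across the $p$-step Markov chain'' asserts the conclusion of that lemma rather than deriving it; without it, the constant in your $C^p$ is not controlled and the claimed $\mathbb{E}\tilde S_\infty^p\leq p!(C/c_\infty)^p$ does not follow. If you insert the product-Lyapunov estimate here, your telescoping becomes a legitimate alternative presentation of the same underlying bound, but the paper's route via the exponential Lyapunov function $\exp\langle\alpha,x\rangle\exp(\|\alpha\|\theta(u))$ is shorter and avoids the bookkeeping over the simplex.

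Two smaller points. First, the theorem as stated is under the invariant measure, so the ``$+o(1)$'' absorbing $X_0$ needs care; the clean way is to note $b\geq0$ forces $A_{0,t}\leq1$, so the contribution from the initial condition is dominated by the stationary moment you are already computing. Second, $\langle\pi,b\rangle>0$ is not an explicit hypothesis of Theorem~\ref{thm:exponential}; you should derive it from $b\geq b'\geq0$ together with the fact that $b'$ is nonzero Lipschitz and $\pi$ has full support (else $b'\equiv0$ and the whole upper bound fails), and state that you are implicitly excluding the degenerate case $b\equiv0$.
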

Again, this result consists of two parts. The upper bound comes from Proposition \ref{prop:upperbound}. The lower bound comes from  the combination of Proposition \ref{prop:weakinter} and Corollary \ref{cor:dissi}. One can find the detailed verification at the end of \ref{sec:appendexp}.

Theorem \ref{thm:exponential} doesn't consider the delicate case where $b(u)=0$ only at a single point. This was mentioned in Theorem \ref{thm:simple} as case (iii).  We only provide a lower bound in Proposition \ref{prop:weakinter}, indicating the tail is strictly heavier than Gaussian. This is already useful in practice. Also note that the statement of Theorem \ref{thm:simple} is rigorous, as we only claim that the distribution is between exponential and Gaussian.

\subsection{Upper bound}
 As a matter of fact, it is relatively easy to see that a process with nonnegative damping has sub-exponential tails. By the comparison principle Proposition \ref{prop:compare}, we only need to consider $b$ that is Lipschitz. 
\begin{prop}
\label{prop:upperbound}
Suppose $b\geq 0$, and the $\theta$ in \eqref{eqn:buildtheta} is well defined, with the Carre du champ $\Gamma(\theta)$ defined in \eqref{eqn:carre} bounded, and the following integrability condition holds for any $\alpha\in\reals$ 
\[
\limsup_{t\to \infty} \E \exp( \alpha\theta(u_t))<\infty,\quad \limsup_{t\to \infty} \E b(u_t)\exp( \alpha\theta(u_t))<\infty. 
\]
Then $X_t$ has sub-exponential tails. In particular, for any $\beta\in \reals^{d_X}$ such that
\[
\tfrac12\|\beta\|^2 \|\nabla_u \theta\|^2+\tfrac12 \sigma_x^2\| \beta\|^2-\|\beta\| \langle \pi, b\rangle<0,
\]
then
\[
\limsup_{t\to\infty}\mathbb{E}\exp \langle\beta, X_t\rangle <\infty. 
\]
In particular, this indicates that  
\[
\limsup_{p\to \infty}\limsup_{ t\to \infty}\frac{\log \E\|X_t\|^{2p}}{p\log p}\leq  2.
\]
\end{prop}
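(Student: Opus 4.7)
My plan is to adapt the product Lyapunov construction of Lemma \ref{lem:lyap} to the exponential regime. Let $\phi(x) := \sqrt{1+\|x\|^2}$ serve as a smooth surrogate of $\|x\|$ (with $\phi \geq 1$ and $\phi \geq \|x\|$), and consider
\[
V_\beta(x,u) := \exp\bigl(\|\beta\|\phi(x)\bigr)\exp\bigl(\|\beta\|\theta(u)\bigr).
\]
The role of $\exp(\|\beta\|\theta)$ is the same as in the polynomial case: since Lemma \ref{lem:cauchy} yields $\mathcal{L}\theta = b-\langle \pi, b\rangle$, this factor is designed to cancel the state-dependent part of the damping in $\mathcal{L}V_\beta$.

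The first step is a direct computation of $\mathcal{L}V_\beta/V_\beta$ via the product rule and the chain rule \eqref{eqn:chain}. The $x$-piece $\|\beta\|\mathcal{L}_x\phi+\tfrac12\|\beta\|^2\sigma_x^2\|\nabla_x\phi\|^2$ contributes $-\|\beta\|b(u)(\phi-1/\phi)+\tfrac12\|\beta\|^2\sigma_x^2+O(1/\phi)$, while the $u$-piece contributes $\|\beta\|(b(u)-\langle\pi,b\rangle)+\tfrac12\|\beta\|^2\|\nabla_u\theta\|^2$. The essential algebraic cancellation is that the two $b(u)$-dependent terms combine into $\|\beta\|b(u)(1-\phi+1/\phi)$, which is non-positive whenever $\phi\geq (1+\sqrt 5)/2$ because $b\geq 0$. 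Discarding this non-positive contribution leaves a leading bound of the form $(-\alpha + o(1))V_\beta$ for large $\phi$, where $\alpha := \|\beta\|\langle\pi,b\rangle-\tfrac12\|\beta\|^2(\|\nabla_u\theta\|^2+\sigma_x^2)>0$ by the stated hypothesis (and $\|\nabla_u\theta\|^2$ is uniformly bounded because $\Gamma(\theta)$ is). On the bounded complement $\{\phi\leq R\}$ a crude estimate $\mathcal{L}V_\beta\leq C(1+b(u))\exp(\|\beta\|\theta(u))$ is enough, and this perturbation has finite expectation under $u_t$ by the two integrability assumptions. Dynkin's formula combined with Gronwall then gives $\limsup_{t\to\infty}\mathbb{E}V_\beta(X_t,u_t)<\infty$.

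The second step converts this into a bound on $\mathbb{E}\exp\langle\beta, X_t\rangle$. Since $\exp\langle\beta, x\rangle\leq\exp(\|\beta\|\phi(x))=V_\beta(x,u)\exp(-\|\beta\|\theta(u))$, H\"older with a conjugate pair $p,q>1$ yields
\[
\mathbb{E}\exp(\|\beta\|\phi(X_t))\leq\bigl(\mathbb{E}V_\beta(X_t,u_t)^p\bigr)^{1/p}\bigl(\mathbb{E}\exp(-q\|\beta\|\theta(u_t))\bigr)^{1/q}.
\]
The key observation is that $V_\beta$ depends on $\beta$ only through $\|\beta\|$, so $V_\beta^p$ coincides exactly with $V_{p\beta}$; the first factor is therefore controlled by re-running the Lyapunov argument of the previous paragraph with $p\beta$ replacing $\beta$. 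Because the hypothesis on $\beta$ is a strict inequality, I may take $p>1$ close enough to $1$ that $p\beta$ continues to satisfy it. The second factor is finite by the exponential integrability of $\theta(u_t)$. For the moment asymptotic, applying the elementary inequality $a^{2p}\leq(2p)!\,e^a$ with $a=\|\beta\|\|X_t\|$ gives $\mathbb{E}\|X_t\|^{2p}\leq (2p)!\,\|\beta\|^{-2p}\limsup_t\mathbb{E}\exp(\|\beta\|\|X_t\|)$, and Stirling's formula then yields $\log\mathbb{E}\|X_t\|^{2p}\leq 2p\log p+O(p)$, so the ratio $\leq 2$. The main technical obstacle is the bookkeeping of the $1/\phi$ remainders---ensuring that the leading sign survives in the small-$\phi$ window where the cancellation $\|\beta\|b(u)(1-\phi+1/\phi)\leq 0$ fails---and verifying that the H\"older exponent $p>1$ can be selected admissibly; both are routine once the algebraic cancellation is in place.
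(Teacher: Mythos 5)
Your proof is correct and takes a genuinely different route from the paper's, while staying inside the same product-Lyapunov framework. The paper works with the \emph{directional} exponential $\CE_t=\exp\langle\alpha,X_t\rangle$: the $b(u)$-dependent term there is $-\langle\alpha,X_t\rangle b(u_t)\CE_t$, which does not have a usable sign, and the paper controls it through a clever auxiliary function $H(y)=e^y(\|\alpha\|-y)$ whose global maximum gives $-b(u_t)\langle\alpha,X_t\rangle\CE_t\leq \exp(\|\alpha\|-1)b(u_t)-\|\alpha\|\CE_t b(u_t)$; after the Lyapunov/Gronwall/H\"older step the paper must then assemble the radial bound $\E\exp(a\|X_t\|)$ from $2d_X$ directional ones via an $l_\infty$-norm inequality. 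Your version replaces $\langle\alpha,x\rangle$ with the \emph{radial} surrogate $\|\beta\|\phi(x)$, $\phi=\sqrt{1+\|x\|^2}$, for which $-b(u)\langle x,\nabla_x\phi\rangle=-b(u)(\phi-1/\phi)$ already has the right sign; combined with the $u$-potential's contribution $+\|\beta\| b(u)$ this yields the explicit non-positive factor $\|\beta\| b(u)(1-\phi+1/\phi)$ for $\phi$ beyond the golden ratio, so no $H$-function trick is needed and the $l_\infty$ assembly step disappears entirely. The H\"older step is also slightly different: the paper shrinks by a factor $\rho<1$ and then rescales $\alpha=\rho^{-1}\beta$, while you inflate by $p>1$ and use the identity $V_\beta^p=V_{p\beta}$; both exploit the strictness of the admissibility inequality to leave room for the exponent. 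The trade-off is that your construction is more symmetric and gives the radial exponential moment directly, at the cost of a small bounded window $\{1\leq\phi<(1+\sqrt5)/2\}$ where the cancellation fails and one falls back on the crude bound driven by the integrability hypotheses on $b(u_t)$ and $\theta(u_t)$ -- but that is precisely the kind of bounded-region perturbation the Lyapunov-plus-Gronwall machinery is designed to absorb, and both proofs lean on the same two $u_t$-integrability conditions in the end.
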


\subsection{Lower bound}
To show the lower bound requires additional work. First let us define the set of damping functions that can yield approximately exponential tails.
\begin{defn}
\label{defn:Am}
We say  a pair of functions $(b,h)\in \mathcal{A}_m$  if there are $g_1,\ldots, g_m$  on $\mathbb{R}^{d_u}$ such that
\begin{enumerate}[(1)]
\item $b\geq 0$, and $b(u_*)=0$ for certain $u_*$. 
\item $g_1$ satisfies the level-1 constraint $\Gamma(g_1)\geq b$.
\item $g_k$ satisfies the level-k constraint $\Gamma(g_k)+\mathcal{L} g_{k-1}\geq 0$, for $k=2,\ldots m$.
\item $\mathcal{L}g_m\geq -M$ for a constant $M$.
\item There  are constants $M_0$ and $M_1$
\[
 G_p(u)=\sum_{k=1}^m p^{\frac{1}{2^k}} g_k(u)\leq  \sqrt{p} M_0, \quad \E G_p(u_0)\geq -\sqrt{p}M_1. 
\]
\item Alignment condition: for all $j,k\leq m$, $\Gamma(g_j,g_k)\geq 0$.
\end{enumerate}
\end{defn}

The long time damping effect from $b(u_t)\in \mathcal{A}_m$ is  revealed by the following lemma. The main message is that $b(u_t)$ creates a weaker long time damping when applied to higher moments of $X_t$. 
\begin{lem}
\label{lem:weakdampgen}
Suppose $(b,h)\in \mathcal{A}_m$, then the following holds under the invariant measure for $p\geq 1$ :
\[
 \mathbb{E} \exp\left(-2p\int^t_0 b(u_s)\rmd s\right)\geq \exp(-2 p^{\frac{1}{2^m}}M t-2\sqrt{p}M_0-2\sqrt{p}M_1) . 
\]
\end{lem}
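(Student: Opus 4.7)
The plan is to build a Feynman--Kac / exponential-martingale weight $\exp(U(u))$ tailored so that the running cost $2p\int_0^t b(u_s)\rmd s$ is almost cancelled by $\mathcal{L}U+\Gamma(U)$. The key choice is
\[
U(u):=G_{2p}(u)=\sum_{k=1}^m (2p)^{1/2^k} g_k(u),
\]
i.e.\ exactly the combination in condition (5) of Definition~\ref{defn:Am}, but with $p$ replaced by $2p$. Writing $\beta_k:=(2p)^{1/2^k}$, these exponents are forced by two algebraic requirements: I need $\beta_1^2=2p$ so that the $k=1$ diagonal term in $\Gamma(U)$ contributes $2p\,\Gamma(g_1)\geq 2pb$, and $\beta_{k+1}^2=\beta_k$ for $1\leq k<m$ so that the intermediate $\mathcal{L}g_k$ terms telescope when $\Gamma(g_{k+1})\geq -\mathcal{L}g_k$ is invoked.

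Once $U$ is in place, the rest is bookkeeping. Using alignment (condition (6)) to drop the nonnegative cross terms in $\Gamma(U)$ and then feeding the hierarchy $\Gamma(g_1)\geq b$ and $\Gamma(g_k)\geq -\mathcal{L}g_{k-1}$ into the diagonal, I expect to obtain
\[
\mathcal{L}U(u)+\Gamma(U)(u)\geq 2p\,b(u)+\beta_m\mathcal{L}g_m(u)\geq 2p\,b(u)-(2p)^{1/2^m}M,
\]
the last inequality being condition (4). I would then apply It\^o's formula (via the chain rule \eqref{eqn:chain}) to $Y_t:=\exp\bigl(U(u_t)-2p\int_0^t b(u_s)\rmd s\bigr)$, which yields $\rmd Y_t\geq -(2p)^{1/2^m}M\,Y_t\,\rmd t+Y_t\,\rmd N_t$ for a local martingale $N_t$. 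Since $U$ is uniformly upper bounded by $\sqrt{2p}M_0$ and $b\geq 0$, $Y_t$ is bounded, so the stochastic integral is a true martingale after localization. Taking expectations and invoking Gronwall then gives $\E Y_t\geq \E Y_0\exp(-(2p)^{1/2^m}Mt)$.

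The closing step is to peel $\exp(U(u_t))$ off $Y_t$ using the pointwise bound $U(u_t)\leq \sqrt{2p}M_0$, and to lower bound $\E Y_0=\E\exp(U(u_0))\geq \exp(\E U(u_0))\geq \exp(-\sqrt{2p}M_1)$ by Jensen together with condition (5) (applied at $2p$ under the invariant law). Absorbing $\sqrt{2p}\leq 2\sqrt{p}$ and $(2p)^{1/2^m}\leq 2p^{1/2^m}$ then produces the claimed inequality. The only genuinely delicate point is identifying the exponents $\beta_k=(2p)^{1/2^k}$: the whole definition of $\mathcal{A}_m$ is engineered precisely so that this telescoping goes through, and once the choice is made the rest reduces to It\^o, Gronwall, and Jensen.
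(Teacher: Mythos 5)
Your argument is correct, and it is a genuinely (if mildly) different route from the paper's. The paper builds its exponential weight $U_{p,t}$ out of $G_p$ with exponents $p^{1/2^k}$, so the telescoping only produces a coefficient $p$ on $\Gamma(g_1)\geq b$; it then bounds $\E\exp\bigl(-p\int_0^t b + G_p(u_t)\bigr)$ and must invoke Cauchy--Schwarz afterwards to pass from $p$ to the desired $2p$. You instead reparametrize from the start with $\beta_k=(2p)^{1/2^k}$, i.e.\ $U=G_{2p}$, so that $\beta_1^2=2p$ and $\beta_{k+1}^2=\beta_k$ make the diagonal of $\Gamma(U)$ produce $2p\Gamma(g_1)\geq 2pb$ in one shot and let the intermediate $\mathcal{L}g_k$ terms telescope away against $\Gamma(g_{k+1})$; this eliminates the Cauchy--Schwarz step entirely. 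A second cosmetic difference: the paper's weight $U_{p,t}$ uses $-p^{1/2^m}\mathcal{L}g_m - p\Gamma(g_1)$ as its running cost and only compares to $-pb$ pointwise at the end, whereas you fold $-2pb$ into the running cost of $Y_t$ directly and use the generator inequality $\mathcal{L}U+\Gamma(U)\geq 2pb -(2p)^{1/2^m}M$. Both steps are equivalent rearrangements of the same computation; your version is slightly cleaner and in fact yields the marginally sharper constants $\sqrt{2p}$ and $(2p)^{1/2^m}$, which you then weaken to $2\sqrt{p}$ and $2p^{1/2^m}$ to match the stated bound. The points that need to be (and are) used correctly are: alignment (condition (6)) is needed so that discarding the cross terms in $\Gamma(G_{2p})$ gives a lower bound, and condition (5) must be read as holding at the shifted parameter $2p$ --- both of which you handle; the localization of the stochastic integral is justified since $U\leq\sqrt{2p}M_0$ and $b\geq 0$ make $Y_t$ uniformly bounded.
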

Sub-exponential tails come as a result of this weak long time damping. 
\begin{prop}
\label{prop:weakinter}
Suppose the damping and the drift of $u_t$, $(b,h)$, belongs to  $\mathcal{A}_m$ as in Definition \ref{defn:Am}, then under the equilibrium measure,
\[
\liminf_{p\to\infty}\liminf_{t\to \infty}\frac{\log \mathbb{E}\|X_t\|^{2p}}{p\log p}\geq 2-\frac{1}{2^m}.
\]
In other words, $X_t$ has a tail between exponential and Gaussian. 
\end{prop}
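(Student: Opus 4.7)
The plan is to exploit the conditional Gaussian structure of $X_t$ given the full $u$-path, which reduces the problem to controlling an exponential functional of $u$ that Lemma \ref{lem:weakdampgen} already bounds. Under the equilibrium measure, stationarity gives the representation $X_t=\sigma_x\int_{-\infty}^t e^{-\int_s^t b(u_r)\rmd r}\rmd W_s$, so conditional on $(u_s)_{s\leq t}$ the vector $X_t$ is $N(0,\sigma_x^2\Sigma_t(u)\,I_{d_X})$ with $\Sigma_t(u):=\int_{-\infty}^t e^{-2\int_s^t b(u_r)\rmd r}\rmd s$. Dropping the portion over $(-\infty,0]$ (nonnegative because $b\ge 0$) and invoking the Gaussian moment formula yields
\[
\E\|X_t\|^{2p}=c_p\sigma_x^{2p}\E[\Sigma_t(u)^p]\;\geq\;c_p\sigma_x^{2p}\,\E\Bigl[\Bigl(\int_0^t e^{-2\int_s^t b(u_r)\rmd r}\rmd s\Bigr)^{\!p}\Bigr],
\]
where $c_p:=\E\|Z\|^{2p}$ for $Z\sim N(0,I_{d_X})$, and Stirling's formula gives $\log c_p=p\log p+O(p)$.

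Next I would exploit $b\ge 0$ pointwise: for $s\in[0,t]$ we have $\int_s^t b(u_r)\rmd r\leq \int_0^t b(u_r)\rmd r$, so $\int_0^t e^{-2\int_s^t b(u_r)\rmd r}\rmd s\geq t\,e^{-2\int_0^t b(u_r)\rmd r}$. Raising to the $p$-th power and applying Lemma \ref{lem:weakdampgen} under the invariant measure,
\[
\E\Bigl[\Bigl(\int_0^t e^{-2\int_s^t b(u_r)\rmd r}\rmd s\Bigr)^{\!p}\Bigr]\;\geq\; t^p\,\E\exp\!\Bigl(-2p\!\int_0^t\! b(u_r)\rmd r\Bigr)\;\geq\; t^p\exp\!\bigl(-2p^{2^{-m}}Mt-2\sqrt{p}(M_0+M_1)\bigr).
\]
Combining the two chains gives $\log\E\|X_t\|^{2p}\geq \log c_p+2p\log\sigma_x+p\log t-2p^{2^{-m}}Mt-2\sqrt{p}(M_0+M_1)$.

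Finally I would optimize in $t$. The $t$-dependent piece $p\log t-2p^{2^{-m}}Mt$ is maximized at $t_p:=p^{1-2^{-m}}/(2M)$, where it equals $p(1-2^{-m})\log p+O(p)$. Since $\E\|X_t\|^{2p}$ is constant in $t$ under the equilibrium measure, evaluating at $t_p$ gives a lower bound for $\liminf_{t\to\infty}\E\|X_t\|^{2p}$, and substituting $\log c_p=p\log p+O(p)$ yields
\[
\log \E\|X_t\|^{2p}\;\geq\; p\log p+p(1-2^{-m})\log p+O(p)+O(\sqrt{p})\;=\;p(2-2^{-m})\log p+O(p).
\]
Dividing by $p\log p$ and sending $p\to\infty$ gives the asserted inequality. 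The main obstacle is identifying the correct scale $t_p=p^{1-2^{-m}}/(2M)$ that matches the sublinear exponent $p^{2^{-m}}$ supplied by Lemma \ref{lem:weakdampgen}; this is precisely what converts the elementary $t^p$ factor (gained from $b\ge 0$) into the extra $p(1-2^{-m})\log p$ contribution which, added to the Gaussian $\log c_p = p\log p$, produces the target exponent $2-2^{-m}$. The conditional Gaussian reduction, the stationary representation, and the Stirling bookkeeping are routine once this scaling is chosen.
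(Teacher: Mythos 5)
Your proof is correct and it takes a genuinely different (cruder, more elementary) route than the paper's, which deserves comment. Both you and the paper reduce the problem to a lower bound on $\E\bigl(\int_0^t A_{s,t}^2\,ds\bigr)^p$ with $A_{s,t}=\exp(-\int_s^t b(u_r)\,dr)$, and both ultimately invoke Lemma \ref{lem:weakdampgen} to control $\E\exp(-2p\int b)$. But the paper expands the $p$-fold product as a multi-variable integral, uses the monotonicity $A_{s_1,t}^2\cdots A_{s_p,t}^2\ge A_{s_*,t}^{2p}$ with $s_*=\min s_i$, rewrites this as $p\int_0^t(t-s)^{p-1}\E A_{s,t}^{2p}\,ds$, and then sends $t\to\infty$ so the resulting convergent Laplace-type integral is estimated with Lemma \ref{lem:expmoment}. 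You instead use the even cruder pointwise bound $\int_0^t A_{s,t}^2\,ds\ge t\,A_{0,t}^2$ (valid because $b\ge 0$), apply Lemma \ref{lem:weakdampgen} once, and recover the sharp exponent by optimizing over the horizon $t$ at $t_p\sim p^{1-2^{-m}}$. This is the same saddle, just reached through $t$ rather than through the integration variable of Lemma \ref{lem:expmoment}, so you avoid Lemma \ref{lem:expmoment} entirely, which is a genuine simplification.

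The one place where you owe a bit more justification is the step that lets you evaluate at the finite horizon $t_p$ to control $\liminf_{t\to\infty}$. Your bound $t^p\exp(-2p^{2^{-m}}Mt-O(\sqrt p))$ degenerates as $t\to\infty$ for fixed $p$, so unlike the paper's estimate you cannot simply send $t\to\infty$. You resolve this by invoking the doubly-infinite stationary representation $X_t=\sigma_x\int_{-\infty}^t A_{s,t}\,dW_s$, but membership in $\mathcal{A}_m$ alone does not guarantee this integral is finite a.s., so its existence (and hence the claim that $\E\|X_t\|^{2p}$ is $t$-independent) needs an extra argument. There is an easy fix that stays closer to the paper's setup: take $X_0=0$, $u_0\sim\pi$; then by shift-invariance of the $u$-law, $\int_0^t A_{s,t}^2\,ds$ is stochastically non-decreasing in $t$ (it has the same law as $\int_h^{t+h}A_{s,t+h}^2\,ds\le\int_0^{t+h}A_{s,t+h}^2\,ds$), hence $\E\|X_t\|^{2p}$ is non-decreasing, and evaluating the lower bound at $t_p$ legitimately controls the $\liminf_{t\to\infty}$. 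With that patch your argument is complete.
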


\subsection{Energy dissipation}
If $(b,h)$ is in $\mathcal{A}_m$ for all $m$, then Proposition \ref{prop:weakinter} indicates the higher moments of $\|X_t\|$ behaves very much the same as the exponential distribution. In this section we show that this will be the case under the conditions of Proposition \ref{prop:weakinter}.

%On the other hand,  note that $b$ is in $\mathcal{A}_m$ for all $m$ does not mean $b\in \mathcal{A}_\infty$. For a $b\in \mathcal{A}_\infty$ to be well defined, one also needs to check the $G_p$ in Definition \ref{defn:Am} converges as an infinite sum. We will not do this in our analyses. But this will not be very restrictive in application, since in Theorem \ref{prop:weakinter}, $\frac{1}{2^m}\approx 0$ even with a small $m$.
\begin{lem}
\label{lem:dissi}
Suppose $b(u_*)=0$, and for some $m\in \mathbb{Z}^+, C>0$,
\begin{equation}
\label{eqn:bbounded}
b(u)\leq C\|u-u_*\|^{2^{m+1}-2}.
\end{equation}
Suppose also the energy centered at $u_*$ is dissipative under the drift $h$,  so for some $\lambda,M_\lambda>0$, 
\begin{equation}
\label{eqn:dissipative}
\langle u-u_*, h(u)\rangle\leq -\lambda\|u-u_*\|^2 +M_\lambda.
\end{equation}
Then $(b,h)\in \mathcal{A}_m$.
\end{lem}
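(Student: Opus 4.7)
The plan is to take explicit ansatz functions $g_k$ that are negative constant multiples of even powers of $\|u-u_*\|$, with the powers forming a geometric sequence $s_k = 2^{m-k+1}$ that halves at each step. The halving is exactly what makes the Carré du Champ of $g_k$ and the generator of $g_{k-1}$ share the same polynomial order in $\|u-u_*\|$, so that the recursive inequality in condition (3) of Definition \ref{defn:Am} collapses to a scalar condition on the coefficients. The terminal function $g_m$ is quadratic, so condition (4) reduces directly to the dissipativity assumption on $h$.

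Without loss of generality take $u_*=0$. Set $s_k := 2^{m-k+1}$, so that $s_1 = 2^m$, $s_m = 2$, and $s_{k-1} = 2 s_k$. Define
\[
g_k(u) := -c_k \|u\|^{s_k}, \qquad k = 1, \ldots, m,
\]
with positive constants $c_1,\ldots,c_m$ to be fixed inductively. Conditions (1) and (6) are then essentially automatic: (1) is hypothesis, and for (6), both $\nabla g_j$ and $\nabla g_k$ are negative scalar multiples of $u$, so $\langle \nabla g_j,\nabla g_k\rangle = c_j c_k s_j s_k \|u\|^{s_j+s_k-2}\ge 0$.

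For (2), a direct calculation gives $\Gamma(g_1) = \tfrac12 c_1^2 s_1^2 \|u\|^{2s_1-2} = \tfrac12 c_1^2\,2^{2m}\,\|u\|^{2^{m+1}-2}$, which dominates $b(u)\le C\|u\|^{2^{m+1}-2}$ once $c_1$ is large enough. For (3), combining the chain rule identities $\nabla g_{k-1} = -c_{k-1} s_{k-1}\|u\|^{s_{k-1}-2}u$ and $\mathrm{tr}(\nabla_u^2 g_{k-1}) = -c_{k-1} s_{k-1}(s_{k-1}-2+d_u)\|u\|^{s_{k-1}-2}$ with the dissipativity $\langle u, h(u)\rangle\le -\lambda\|u\|^2 + M_\lambda$ yields
\[
\mathcal{L} g_{k-1} \;\ge\; c_{k-1} s_{k-1}\lambda \|u\|^{s_{k-1}} \;-\; A_{k-1}\|u\|^{s_{k-1}-2}
\]
for an explicit $A_{k-1}$ depending on $c_{k-1}, s_{k-1}, M_\lambda, d_u$. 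Since $2s_k = s_{k-1}$, one also has $\Gamma(g_k) = \tfrac12 c_k^2 s_k^2 \|u\|^{s_{k-1}-2}$, so discarding the positive $\|u\|^{s_{k-1}}$ term, condition (3) is implied by $\tfrac12 c_k^2 s_k^2 \ge A_{k-1}$. This determines $c_k$ as a function of $c_{k-1}$, and iterating for $k=2,\ldots,m$ fixes all the constants.

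Condition (4) is the $k=m$ case of the same calculation: since $g_m$ is quadratic, $\mathcal{L} g_m \ge 2c_m\lambda\|u\|^2 - A_m \ge -A_m$, so $M:=A_m$ works. For (5), since every $g_k\le 0$, the upper bound $G_p\le\sqrt{p}\,M_0$ holds with $M_0=0$; for the lower bound, $p^{1/2^k}\le\sqrt{p}$ gives $G_p(u)\ge -\sqrt{p}\sum_k c_k\|u\|^{s_k}$, and taking expectations under the equilibrium measure $\pi$ gives $M_1 := \sum_k c_k\langle\pi,\|\cdot\|^{s_k}\rangle$, which is finite because the dissipativity of $h$ forces all polynomial moments of $\|u\|$ at equilibrium to be finite (a standard Lyapunov bound using $\|u\|^{2n}$). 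The only mildly delicate point in the argument is the bookkeeping of how the constants $c_k$ propagate through the induction—each $c_k$ must grow at least like $\sqrt{A_{k-1}}/s_k$ and $A_{k-1}$ depends linearly on $c_{k-1}$—but since $m$ is fixed, the resulting constants stay finite and the construction closes up.
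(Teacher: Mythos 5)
Your proof is correct and is essentially identical to the paper's: the same ansatz $g_k = -c_k\|u-u_*\|^{2^{m+1-k}}$ (the paper writes $M_k$ where you write $c_k$), the same inductive determination of the constants by matching $\Gamma(g_k)$ against the negative part of $\mathcal{L}g_{k-1}$, and the same verification of conditions (4)--(6). Your check of the lower bound in condition (5) — spelling out that the polynomial moments of $\|u\|$ under $\pi$ are finite via a standard Lyapunov argument — is slightly more explicit than the paper's one-line remark, but it is the same idea.
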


\begin{cor}
\label{cor:dissi}
Suppose  $b(u)=0$ when $\|u-u^*\|\leq \delta$,  and $b$ has polynomial growth for some $n$
\[
b(u)\leq D\|u-u_*\|^{n}.
\]
Suppose  the energy centered at $u_*$ is dissipative under the drift $h$, so \eqref{eqn:dissipative} holds. Then $(b,h)\in \mathcal{A}_m$ for all $m$ such that $2^{m+1}\geq n+2$. In other words, $X_t$ will have an exponential like tail. 
\end{cor}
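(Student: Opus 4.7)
The plan is to deduce this corollary directly from Lemma \ref{lem:dissi}, whose conclusion is already the statement $(b,h)\in \mathcal{A}_m$ that we need. Lemma \ref{lem:dissi} has two hypotheses: the dissipation condition \eqref{eqn:dissipative}, which is assumed here verbatim, and the pointwise bound $b(u)\le C\|u-u_*\|^{2^{m+1}-2}$ with $b(u_*)=0$. So the only real work is to massage our assumption $b(u)\le D\|u-u_*\|^n$ (together with the fact that $b$ vanishes on the ball of radius $\delta$ around $u_*$) into the form required by \eqref{eqn:bbounded} for the given exponent $2^{m+1}-2\ge n$.

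First I would observe that $b(u_*)=0$ is immediate, since $u_*$ lies inside the $\delta$-ball on which $b$ is identically zero. Next I would establish the polynomial bound with exponent $2^{m+1}-2$ by splitting into three regimes. On $\|u-u_*\|\le \delta$ we have $b(u)=0$, so any bound is trivially satisfied. On $\|u-u_*\|\ge 1$ we use $\|u-u_*\|^n\le \|u-u_*\|^{2^{m+1}-2}$, which holds because $2^{m+1}-2\ge n$. On the remaining annulus $\delta<\|u-u_*\|<1$ (only relevant when $\delta<1$) we write
\[
\|u-u_*\|^n = \|u-u_*\|^{2^{m+1}-2}\,\|u-u_*\|^{-(2^{m+1}-2-n)}\le \delta^{-(2^{m+1}-2-n)}\,\|u-u_*\|^{2^{m+1}-2}.
\]
Choosing $C:=D\max\{1,\delta^{-(2^{m+1}-2-n)}\}$ then gives $b(u)\le C\|u-u_*\|^{2^{m+1}-2}$ globally, which is exactly \eqref{eqn:bbounded}.

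With both hypotheses of Lemma \ref{lem:dissi} verified, we conclude $(b,h)\in \mathcal{A}_m$ for every $m$ with $2^{m+1}\ge n+2$. Feeding this into Proposition \ref{prop:weakinter} yields
\[
\liminf_{p\to\infty}\liminf_{t\to\infty}\frac{\log \mathbb{E}\|X_t\|^{2p}}{p\log p}\ge 2-\frac{1}{2^m}
\]
for all such $m$, and letting $m\to\infty$ produces the sharp exponential lower bound $2$. Combined with the upper bound of Proposition \ref{prop:upperbound}, this pins down the exponential-type tail asserted at the end of the statement. There is no serious obstacle here; the corollary is essentially a packaging step that converts Lemma \ref{lem:dissi} from the ``vanishing at a single point with a specified exponent'' form into the more practical ``vanishing on a neighborhood with arbitrary polynomial growth'' form, the only minor care being the case split in $\|u-u_*\|$ when $n$ is strictly smaller than $2^{m+1}-2$.
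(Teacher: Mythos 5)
Your proof is correct and follows essentially the same route as the paper: verify the hypothesis \eqref{eqn:bbounded} of Lemma \ref{lem:dissi} using the vanishing of $b$ on the $\delta$-ball plus the polynomial growth bound, then invoke the lemma. The only cosmetic difference is that the paper handles the region $\|u-u_*\|\ge\delta$ in one stroke by rescaling to $\|(u-u_*)/\delta\|\ge 1$ (giving $C=D\delta^{n+2-2^{m+1}}$ uniformly), whereas you split that region further into $\|u-u_*\|\ge 1$ and the annulus $\delta<\|u-u_*\|<1$; both yield the same constant up to the harmless $\max\{1,\cdot\}$.
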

\begin{proof}
We just need to verify \eqref{eqn:bbounded} for some $C$. Simply note that when $\|u-u_*\|\geq \delta$
\begin{align*}
b(u)\leq D\|u-u_*\|^{n}&=D\delta^n \|(u-u_*)/\delta\|^{n}\\
&\leq D\delta^n \|(u-u_*)/\delta\|^{2^{m+1}-2}=D\delta^{n+2-2^{m+1}} \|u-u_*\|^{2^{m+1}-2}.
\end{align*}
And when $\|u-u_*\|<\delta$, $b(u)=0$, so \eqref{eqn:bbounded} holds automatically. 
\end{proof}

\section{Sub-Gaussian tails from strictly positive dampings}
The following analysis is rather standard. But since it is short and we want to be self-contained, we provide the details rather than finding a reference. 
\label{sec:Gaussian}
\begin{thm}
\label{thm:Gaussiantail}
Suppose $b(u)\geq b_0$ for a $b_0>0$, then if $\alpha<b_0$,
\[
\limsup_{t\to \infty}\E \exp(\alpha\|X_t\|^2)<\infty. 
\]
This leads to 
\[
\limsup_{p\to \infty}\limsup_{t\to \infty}\frac{\E \|X_t\|^{2p}}{p\log p}\leq 1.
\]
If $u_t$ dissipates the energy, that is 
\[
\langle h(u), u\rangle\leq -\lambda \|u\|^2+M_\lambda,
\] 
then if $\lambda>\alpha$, 
\[
\limsup_{t\to \infty}\E \exp(\alpha\|u_t\|^2)<\infty. 
\]
\end{thm}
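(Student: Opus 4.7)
The plan is to use the exponential Lyapunov function $V(x)=\exp(\alpha\|x\|^2)$ together with the generator $\mathcal{L}$ from the preliminaries. A direct calculation (treating $V$ as a function of $x$ only) yields
\[
\mathcal{L}V(x,u)=V(x)\bigl[-2\alpha b(u)\|x\|^2+\alpha\sigma_x^2 d_X+2\alpha^2\sigma_x^2\|x\|^2\bigr],
\]
which, under the simplified normalization $\sigma_x=1$ used in the main theorem and the hypothesis $b(u)\geq b_0$, becomes $\mathcal{L}V/V\leq -2\alpha(b_0-\alpha)\|x\|^2+\alpha d_X$. Choosing $\alpha<b_0$ makes this coefficient strictly negative; splitting space into a ball of radius $R$ chosen so that the right-hand side is below some $-\rho<0$ outside the ball, and handling the bounded complement separately, I obtain the standard drift inequality $\mathcal{L}V\leq-\rho V+K$ for suitable constants. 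Dynkin's formula and Gronwall then give $\E V(X_t)\leq e^{-\rho t}V(X_0)+K/\rho$, which proves the first claim.

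For the moment bound, the term-by-term inequality $\|x\|^{2p}\leq(p!/\alpha^p)\exp(\alpha\|x\|^2)$, obtained by keeping only the $p$-th term of the Taylor series of the exponential, combines with the uniform estimate above to give
\[
\E\|X_t\|^{2p}\leq \frac{p!}{\alpha^p}\bigl(V(X_0)+K/\rho\bigr).
\]
Taking logarithms and applying Stirling's formula $\log p!=p\log p-p+O(\log p)$ yields $\log\E\|X_t\|^{2p}\leq p\log p-p(1+\log\alpha)+O(\log p)$. Dividing by $p\log p$ and letting first $t\to\infty$ and then $p\to\infty$ produces the claimed $\limsup\leq 1$.

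The third assertion is a self-contained Lyapunov argument for the autonomous $u$-process. Applying $\mathcal{L}$ to $W(u):=\exp(\alpha\|u\|^2)$ gives
\[
\mathcal{L}W(u)=W(u)\bigl[2\alpha\langle h(u),u\rangle+\alpha d_u+2\alpha^2\|u\|^2\bigr]\leq W(u)\bigl[-2\alpha(\lambda-\alpha)\|u\|^2+2\alpha M_\lambda+\alpha d_u\bigr],
\]
where the inequality uses the energy-dissipation hypothesis. For $\alpha<\lambda$ the coefficient of $\|u\|^2$ is strictly negative, and the split-and-Dynkin argument from the first part applies verbatim. No single step is genuinely delicate; the common mechanism across all three parts is the exponential ansatz together with the observation that the quadratic growth it produces in $\mathcal{L}$ is dominated by the linear dissipation once $\alpha$ is taken below the relevant damping or dissipation rate, which mirrors the product-type Lyapunov strategy used throughout the rest of the paper.
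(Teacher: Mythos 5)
Your argument is correct and follows essentially the same route as the paper: apply the generator to $\exp(\alpha\|x\|^2)$ (resp.\ $\exp(\alpha\|u\|^2)$), use $b\geq b_0$ (resp.\ the energy dissipation bound) and $\alpha$ below the relevant rate to get a strict drift inequality, close via Dynkin and Gronwall, and then deduce the moment estimate from the Taylor term-by-term bound $\|x\|^{2p}\leq p!\alpha^{-p}\exp(\alpha\|x\|^2)$ plus a Stirling-type estimate. The only cosmetic differences are your explicit $\sigma_x=1$ normalization (the paper keeps $\sigma_x$ and works with $\alpha<b_0/\sigma_x^2$) and your use of Stirling's formula where the paper sums $\log k$ directly.
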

\begin{proof}
Let $\CE(x)=\exp(\alpha \|x\|^2)$, with $\alpha<b_0/\sigma_x^2$. Apply the generator to it, 
\begin{align}
\notag
\mathcal{L}\CE&=-2\alpha \|x\|^2 b(u)  \CE + \sigma_x^2(\alpha d_X+2\alpha^2\|x\|^2)\CE\\
\label{tmp:gauss}
&\leq (-\delta \|x\|^2   + \sigma_x^2 d_X)\alpha\CE,
\end{align}
where $\delta=2b_0-2\alpha \sigma_x^2$.  When $\delta\|x\|^2 \leq (1+d_X)\sigma_x^2$, $\CE(x)\leq \exp((1+d_X)\alpha \sigma_x^2/\delta)$, otherwise $\mathcal{L}\CE\leq -\alpha\sigma_x^2  \CE$. Therefore 
\[
\mathcal{L}\CE\leq -\alpha\sigma_x^2  \CE+d_X \alpha \sigma_x^2  \exp((1+d_X)\alpha \sigma_x^2/\delta).
\]
So Dynkin's formula and Gronwall's inequality immediately gives us 
\[
\E\CE(X_t)\leq \exp(-\alpha\sigma_x^2 t) \E \CE(X_0)+d_X\exp((1+d_X)\alpha \sigma_x^2/\delta)<\infty. 
\]
Finally note that by Taylor expansion of $\exp(\alpha\|x\|^2)$,
\[
\|x\|^{2p}\leq p! \alpha^{-p} \exp(\alpha\|x\|^2).
\]
Apply an estimate of $\log k$ as in \eqref{tmp:logsum}, 
\[
\log \E \|X_t\|^{2p}\leq \sum_{k=1}^{p} \log k  -p\log \alpha+\log \E \exp(\alpha \|X_t\|^2)=p\log p +O(p). 
\]

A similar analysis applies to $u_t$ as well. Let $\CE(u)=\exp(\alpha \|u\|^2)$. Apply the generator to it, 
\begin{align*}
\mathcal{L}\CE&=2\alpha  \langle h(u), u\rangle  \CE + (\alpha d_u+2\alpha^2\|u\|^2)\CE\\
&\leq (-2(\lambda-\alpha) \|u\|^2   + d_u+2M_\lambda)\alpha\CE.
\end{align*}
The follow up analysis is much the same as after \eqref{tmp:gauss}.
\end{proof}

\section{General conditional Gaussian system}
\label{sec:condGauss}
The $X_t$ part in a general multivariate conditional Gaussian system \eqref{sys:condGauss} can be written as 
\[
\rmd X_t=-B(u_t)X_t \rmd t+\Sigma_X \rmd W_t,
\]
where $u_t$ is the same as in system \eqref{sys:dyd}. This formulation is different from \eqref{sys:dyd}, since $B$ is matrix-valued. 
In this section, we show how to build moment bounds for \eqref{sys:condGauss} by building surrogate damping rates as in \eqref{sys:dyd}.

To begin, we decompose $B(u_t)$ as
\begin{equation}
\label{eqn:decompose}
B(u_t)=\sum_{i=1}^n b_i(u_t)B_i,
\end{equation}
for some matrices $B_i$ and functions $b_i$. This decomposition always exists, since
\[
B(u_t)=\sum_{j,k=1}^{d_X} [B(u_t)]_{j,k}E_{j,k},
\]
where $E_{j,k}$ is the matrix with all components being zero, except the $(j,k)$-th component being $1$.  Yet this choice of decomposition can sometimes be  sub-optimal. 

With decomposition  \eqref{eqn:decompose}, let $N_i\subseteq \{1,\ldots, d_X\}$ include all indices that $B_i$ involves, so if $j\notin N_i$ then $[B_i]_{j,k}=[B_i]_{k,j}=0$ for any $k$. There are constants $m_i$ and $M_i$ such that 
\[
m_i I_{N_i}\preceq \frac12(B_i+B_i^T) \preceq M_i I_{N_i}. 
\]
Here $I_{N_i}$ is the diagonal matrix with $(j,j)$-th term being one if and only if $j\in N_i$, and being zero otherwise. With two symmetric matrices $A$ and $B$, $A\preceq B$ indicates that $B-A$ is positive semidefinite. One easy choice of $N_i$ can be $N_i=\{1,\ldots, d_X\}$ for all $i$, then $m_i$ and $M_i$ are simply the minimum and maximum eigenvalues of $\frac12(B_i+B_i^T)$. 

Consider the following two scalar value damping functions where $a\vee b=\max\{a,b\}$ and $a\wedge b=\min\{a,b\}$. 
\begin{equation}
\label{eqn:scalarb}
\bar{b}(u) =\bigwedge^{d_X}_{j=1}\sum_{i: j\in  N_i}M_i b_i\wedge m_ib_i, \quad \underline{b}(u) =\bigvee^{d_X}_{j=1}\sum_{i: j\in  N_i}M_i b_i\vee m_ib_i. 
\end{equation}
In the special case when $N_i=\{1,\cdots, d_X\}$ for all $i$, the formulation can be simplified
\[
\bar{b}(u) =\sum_{i=1}^nM_i b_i\wedge m_ib_i, \quad \underline{b}(u) =\sum_{i=1}^nM_i b_i\vee m_ib_i. 
\]

Then we have the following lemma.
\begin{lem}
\label{lem:EqGauss}
\label{lem:matrix} When applying the generator to the approximated moment $\CE_q(x)$ in Lemma \ref{lem:Eq}, for any fixed $\delta>0$, there is a constant $C_\delta$, 
\begin{align*}
-(q\underline{b}+\delta|\underline{b}|+\delta)\CE_q(x)-C_\delta (1+|\underline{b}|)\leq \mathcal{L}\CE_q(x,u)
\leq -(q\bar{b}-\delta|\bar{b}|-\delta)\CE_q(x)+C_\delta (1+|\bar{b}|).
\end{align*}
\end{lem}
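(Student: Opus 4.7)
Because $\CE_q$ depends only on $x$, the generator of \eqref{sys:condGauss} collapses to $\mathcal{L}\CE_q(x,u) = -\langle B(u)x,\nabla_x\CE_q\rangle + \tfrac12\mathrm{tr}(\Sigma_X\Sigma_X^T\nabla_x^2\CE_q)$. A direct computation gives $\nabla_x\CE_q(x)=\phi_q(x)\,x$ with the nonnegative scalar
\[
\phi_q(x) := \frac{\|x\|^{q}\bigl[(q+2)+q\|x\|^{2}\bigr]}{(1+\|x\|^{2})^{2}},
\]
so the first-order term becomes $-\phi_q(x)\,x^TB(u)x$. The proof therefore reduces to (i) bracketing the quadratic form $x^TB(u)x$ by the scalar surrogates $\bar{b}(u)\|x\|^2$ and $\underline{b}(u)\|x\|^2$, and (ii) absorbing both the algebraic mismatch between $\phi_q(x)\|x\|^2$ and $q\CE_q(x)$ and the $x$-diffusion term into the $\delta$-perturbations permitted by the statement.

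\textbf{The quadratic-form sandwich.} The central claim is
\[
\bar{b}(u)\|x\|^{2}\;\le\; x^T B(u) x\;\le\;\underline{b}(u)\|x\|^{2}\qquad\text{for all }x,u.
\]
Starting from $B(u)=\sum_i b_i(u)B_i$ together with $m_iI_{N_i}\preceq\tfrac12(B_i+B_i^T)\preceq M_iI_{N_i}$, each summand $x^T\tfrac12(B_i+B_i^T)x$ only sees the coordinates $x_{N_i}$ and satisfies $m_i\|x_{N_i}\|^{2}\le x^T\tfrac12(B_i+B_i^T)x\le M_i\|x_{N_i}\|^{2}$. Multiplying by $b_i(u)$, whose sign is arbitrary, gives $(m_ib_i\wedge M_ib_i)\|x_{N_i}\|^{2}\le b_i\,x^T\tfrac12(B_i+B_i^T)x\le (m_ib_i\vee M_ib_i)\|x_{N_i}\|^{2}$. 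Summing in $i$ and swapping the order of summation via $\|x_{N_i}\|^{2}=\sum_{j\in N_i}x_j^{2}$ collects the coefficient of each $x_j^{2}$ as $\sum_{i:j\in N_i}(m_ib_i\wedge M_ib_i)$ on the low side and $\sum_{i:j\in N_i}(m_ib_i\vee M_ib_i)$ on the high side. Taking the coordinate minimum, respectively maximum, and invoking definition \eqref{eqn:scalarb} yields the sandwich.

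\textbf{Assembly, remainders, and the main obstacle.} Using the algebraic identity $\phi_q(x)\|x\|^{2}=q\CE_q(x)-q+2\|x\|^{q+2}/(1+\|x\|^{2})^{2}$, the sandwich translates into
\[
-\langle B(u)x,\nabla_x\CE_q\rangle \;\le\; -q\bar{b}\,\CE_q(x)+q\bar{b}-\frac{2\bar{b}\,\|x\|^{q+2}}{(1+\|x\|^{2})^{2}},
\]
and a mirror-image lower bound with $\underline{b}$ in place of $\bar{b}$. Since $\|x\|^{q+2}/(1+\|x\|^{2})^{2}$ grows only like $\|x\|^{q-2}$, Young's inequality gives $\|x\|^{q+2}/(1+\|x\|^{2})^{2}\le \varepsilon\,\CE_q(x)+C_\varepsilon$ for any $\varepsilon>0$, which (with $\varepsilon$ proportional to $\delta$) absorbs the residual into $\delta|\bar{b}|\CE_q(x)+C_\delta|\bar{b}|$, while the $q\bar{b}$ term is absorbed into $C_\delta(1+|\bar{b}|)$; the diffusion $\tfrac12\mathrm{tr}(\Sigma_X\Sigma_X^T\nabla_x^2\CE_q)$ is dominated entrywise by a $u$-independent multiple of $\|x\|^{q-2}+1$ and absorbed into $\delta\CE_q+C_\delta$ the same way. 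The delicate piece is the sandwich of the second step: because each $b_i(u)$ can change sign and the sets $N_i$ can overlap arbitrarily, a global eigenvalue bound on $B(u)$ would miss the sign-dependent cancellations between the different $b_i$, and this is precisely why the min-over-$j$ of sums-over-$\{i:j\in N_i\}$ of wedges and vees in \eqref{eqn:scalarb} is the correct surrogate so that the scalar-damping machinery of Sections \ref{sec:poly}--\ref{sec:exp} transfers without modification.
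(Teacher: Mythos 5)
Your proof is correct and follows the same route as the paper: reduce to the quadratic-form sandwich $\bar{b}(u)\|x\|^2\leq x^TB(u)x\leq\underline{b}(u)\|x\|^2$ via the decomposition $B=\sum_ib_iB_i$ with the $m_iI_{N_i}\preceq\frac12(B_i+B_i^T)\preceq M_iI_{N_i}$ bounds, swap the order of summation, then absorb the remainder and the diffusion term into the $\delta$-perturbations by Young's inequality. The only difference is cosmetic: you make the algebraic identity $\phi_q(x)\|x\|^2=q\CE_q(x)-q+2\|x\|^{q+2}/(1+\|x\|^2)^2$ explicit where the paper hides the residuals behind $O((1+\|x\|)^{q-2})$ notation, and you prove both inequalities of the sandwich whereas the paper works out only the upper one and notes the other is symmetric.
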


Consequentially, the role of $b(u)$ in the dyadic model \eqref{sys:dyd} can be replaced by $\bar{b}$ and $\underline{b}$, when finding upper and lower bounds.  So following the proof of Theorem \ref{thm:unstable} and \ref{thm:exponential}, and \ref{thm:Gaussiantail}, we have the following corollaries:

\begin{cor}
\label{cor:upp}
Assume $u_t$ is asymptotically contractive,  $\bar{b}$  is Lipschitz and $\langle \pi, \bar{b}\rangle>0$, then 
\[
\limsup_{t\to \infty} \E \|X_t\|^p<\infty \quad \text{for some }p>0. 
\]
If furthermore $\bar{b}\geq 0$, then the tail of $\|X_t\|$ is sub-exponential. If $\bar{b}\geq b_0>0$, then the tail of $\|X_t\|$ is sub-Gaussian. 
\end{cor}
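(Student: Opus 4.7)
The plan is to reduce the matrix-valued system \eqref{sys:condGauss} to the scalar damping setting \eqref{sys:dyd} by replacing the damping $b(u)$ with the surrogate scalar rate $\bar b(u)$ defined in \eqref{eqn:scalarb}. The bridge is Lemma \ref{lem:EqGauss}: its upper bound
\[
\mathcal{L}\CE_q(x,u)\leq -(q\bar b(u)-\delta|\bar b(u)|-\delta)\CE_q(x)+C_\delta(1+|\bar b(u)|)
\]
has exactly the same structural form as the scalar upper bound in Lemma \ref{lem:Eq}, with $b$ replaced by $\bar b$. Every upper-bound argument in Sections \ref{sec:poly}--\ref{sec:Gaussian} invokes only this structural form on the $x$ side, so once $f(u)$ is built from $\bar b$, the scalar proofs transfer with only notational modifications.

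For the first assertion (finite $p$-th moment for some $p>0$), I will build a product Lyapunov function $V(x,u)=f(u)\CE_q(x)$ following Lemma \ref{lem:lyap}. The $u$-factor is constructed via Feynman--Kac as in \eqref{eqn:buildtheta}, using $\btilde(u)=\bar b(u)-q^{-1}\delta|\bar b(u)|$ in place of the original $\btilde$. Since $\bar b$ is Lipschitz and $u_t$ is asymptotically contractive, Lemma \ref{lem:linear} yields $\|\nabla_u\theta\|\leq C_\gamma\gamma^{-1}(1+q^{-1}\delta)\|\bar b\|_{Lip}$, and $\langle\pi,\btilde\rangle>0$ for all small $\delta$ since $\langle\pi,\bar b\rangle>0$. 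Choosing $q$ small so that $q^{-1}(\rho+\delta)+\tfrac12 q C_\gamma^2\gamma^{-2}\|\bar b\|^2_{Lip}<\langle\pi,\btilde\rangle$, the $u$-side inequality in \eqref{eqn:lyapup} holds. Applying Lemma \ref{lem:gronwall} with $\bar b$ playing the role of $b$, and verifying the integrability conditions on $f$ exactly as in Section \ref{sec:lyap}, gives $\limsup_{t\to\infty}\E\|X_t\|^p<\infty$ for any $p<q$.

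For the sub-exponential assertion under $\bar b\geq 0$, I will mirror the proof of Proposition \ref{prop:upperbound}: apply $\mathcal{L}$ to the product $\CE_{2p}(x) f(u)$ with $f(u)=\exp(2p\,\theta(u))$ and $\theta$ as above, track the scaling in $p$, and verify that the resulting drift yields $\log\E\|X_t\|^{2p}\lesssim 2p\log p+O(p)$. All $x$-derivatives funnel through the same inequality from Lemma \ref{lem:EqGauss}, so the matrix structure enters only through $\bar b$. The sub-Gaussian assertion under $\bar b\geq b_0>0$ follows the template of Theorem \ref{thm:Gaussiantail}: apply $\mathcal{L}$ to $\exp(\alpha\|x\|^2)$, where the key step is the pointwise bound $\langle B(u)x,x\rangle\geq \bar b(u)\|x\|^2$. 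The latter follows directly from the decomposition \eqref{eqn:decompose} and the eigenvalue pinching $m_i I_{N_i}\preceq \tfrac12(B_i+B_i^T)\preceq M_i I_{N_i}$, by treating the sign of each $b_i(u)$ case-wise and then summing coordinate-wise; this is exactly the estimate that underlies the definition of $\bar b$ in \eqref{eqn:scalarb}.

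The main obstacle I anticipate is precisely this last pointwise bound $\langle B(u)x,x\rangle\geq \bar b(u)\|x\|^2$, since the scalar theorems in Sections \ref{sec:exp} and \ref{sec:Gaussian} use $-b(u)\|x\|^2$ identities without having to worry about the interaction between non-commuting matrices and sign changes in the $b_i(u)$. The resolution is that $\bar b$ has been defined conservatively, as a coordinate-wise infimum of the worst-case contribution from each mode $i$, so the inequality reduces to the elementary observation that for every $i$ and every $x$, $b_i\langle\tfrac12(B_i+B_i^T)x,x\rangle \geq (m_ib_i\wedge M_ib_i)\|x_{N_i}\|^2$, and summing over $i$ and then minimizing coordinate-wise recovers the bound. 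Once this step is in hand, the remainder of the three parts is a direct translation of the scalar arguments, with $b$ replaced by $\bar b$ throughout.
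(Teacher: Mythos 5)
Your overall strategy---replace the scalar damping $b$ by the surrogate rate $\bar b$ through Lemma \ref{lem:EqGauss} and then re-run the scalar upper-bound machinery---is exactly the reduction the paper has in mind, and the first assertion (finite low-order moments via the product Lyapunov function $f(u)\CE_q(x)$ with $\theta$ from Lemma \ref{lem:linear}) and the third assertion (sub-Gaussian via $\exp(\alpha\|x\|^2)$ and the quadratic-form bound $\langle B(u)x,x\rangle\geq\bar b(u)\|x\|^2\geq b_0\|x\|^2$) do transfer along the lines you describe.

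The middle assertion, however, has a genuine gap. You say you will ``mirror the proof of Proposition \ref{prop:upperbound}'' by applying $\mathcal{L}$ to $\CE_{2p}(x)f(u)$ with $f(u)=\exp(2p\theta(u))$, but that is not what Proposition \ref{prop:upperbound} does. Its proof uses the one-sided exponential $\exp\langle\alpha,X_t\rangle$ together with the bound $H(y)=\exp(y)(\|\alpha\|-y)\leq\exp(\|\alpha\|-1)$, which is precisely what converts the non-sign-definite drift term $-b(u)\langle\alpha,X_t\rangle\exp\langle\alpha,X_t\rangle$ into $-\|\alpha\|b(u)\exp\langle\alpha,X_t\rangle$ plus a bounded remainder when $b\geq 0$. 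The resulting Lyapunov rate is linear in $\|\alpha\|$, the diffusion and $\|\nabla_u\theta\|^2$ corrections are quadratic in $\|\alpha\|$, so a small fixed $\|\alpha\|$ gives $\limsup_t\E\exp\langle\alpha,X_t\rangle<\infty$, and the uniform moment bound $\log\E\|X_t\|^{2p}\leq 2p\log p+O(p)$ for \emph{all} $p$ then follows by Taylor expansion. Your $\CE_{2p}f$ route instead produces a Lyapunov rate of the form $\rho_p\approx 2p\langle\pi,\bar b\rangle-\tfrac12(2p)^2\|\nabla_u\theta\|^2+O(1)$, which becomes negative once $p$ exceeds a finite threshold whenever $\nabla_u\theta\not\equiv 0$; this only yields finiteness of moments up to that threshold, which is the polynomial-tail conclusion, not sub-exponential control. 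Moreover, the scalar device $\exp\langle\alpha,X_t\rangle$ does not transfer verbatim to the matrix case, because its drift term is $-\langle B(u)^T\alpha,x\rangle\exp\langle\alpha,x\rangle$, which is \emph{not} $-\bar b(u)\langle\alpha,x\rangle\exp\langle\alpha,x\rangle$ and does not factor through the quadratic form $\langle B(u)x,x\rangle$ on which $\bar b$ and Lemma \ref{lem:EqGauss} are built. To close this gap you need a rotationally symmetric exponential test function such as $\exp(c\sqrt{1+\|x\|^2})$, whose gradient is parallel to $x$, so that the drift reduces to $-c\langle B(u)x,x\rangle(1+\|x\|^2)^{-1/2}\exp(c\sqrt{1+\|x\|^2})\leq -c\bar b(u)\|x\|^2(1+\|x\|^2)^{-1/2}\exp(c\sqrt{1+\|x\|^2})$, and then redo the $H$-function argument for that function; alternatively, one can try to extend Proposition \ref{prop:compare} to dominate the matrix system by a scalar process with damping $\bar b$, using $\partial_t\|A_{s,t}x\|^2=-2\langle B(u_t)A_{s,t}x,A_{s,t}x\rangle\leq-2\bar b(u_t)\|A_{s,t}x\|^2$ for the matrix propagator $A_{s,t}$. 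Your write-up contains neither fix, so the sub-exponential part of the corollary is not proven as stated.
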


\begin{cor}
\label{cor:low}
Assume $u_t$ is asymptotically contractive, $\underline{b}$ and $h$ follow Assumption \ref{aspt:regular}.
\begin{enumerate}
\item If $\underline{b}(u_*)<0$ for some $u_*$, then for some $p>0$, $\limsup_{t\to \infty} \E \|X_t\|^p=\infty$.
\item If  $\underline{b}(u)=0$ for $u$ close to $u_*$,  the energy centered at $u_*$ is dissipative as in \eqref{eqn:dissipative}, $\underline{b}\leq D\|u-u_*\|^n$ for some $D$ and $n$,  then $\|X_t\|$ has a tail heavier than exponential. 
\end{enumerate}
\end{cor}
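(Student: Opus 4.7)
The plan is to reduce both claims to the scalar-damping results from Sections \ref{sec:poly} and \ref{sec:exp} via the bracketing inequality in Lemma \ref{lem:EqGauss}. For lower bounds on moments, only the left-hand side
\[
\mathcal{L}\CE_q(x,u) \geq -(q\underline{b}(u) + \delta|\underline{b}(u)| + \delta)\CE_q(x) - C_\delta(1 + |\underline{b}(u)|)
\]
is needed, so the scalar surrogate $\underline{b}$ plays exactly the role that $b$ played in \eqref{sys:dyd}. With this in mind, I would rerun the two original arguments with $\underline{b}$ in place of $b$, verifying at each step that the hypotheses still hold.

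For part 1, assume $\underline{b}(u_*) < 0$. Since $\underline{b}$ and $h$ satisfy Assumption \ref{aspt:regular}, applying Lemma \ref{lem:unstablelower} to $\underline{b}$ produces $g(u) = \exp(q\eta(u))$ with $\eta(u) = -c\|u - u_*\|^m$ such that the second inequality in \eqref{eqn:lyaplow} holds (with $\underline{b}$) for $q$ large enough and $c$ small enough. Combined with the generator bound above from Lemma \ref{lem:EqGauss}, Lemma \ref{lem:lyap} yields $\mathcal{L}U \geq \rho U - C_\delta(1 + |\underline{b}|)g$ for $U(x,u) = g(u)\CE_q(x,u)$. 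The integrability $\limsup_{t\to\infty}\E(1 + |\underline{b}(u_t)|) < \infty$ required by Lemma \ref{lem:gronwall} follows from the polynomial growth of $\underline{b}$ in Assumption \ref{aspt:regular} together with the moment bounds on $u_t$ implied by asymptotic contractivity, and the lemma delivers $\lim_{t\to\infty}\E\|X_t\|^p = \infty$ for $p > q$. For part 2, the hypotheses on $(\underline{b}, h)$ are precisely those of Corollary \ref{cor:dissi}, whose conclusion yields $(\underline{b}, h) \in \mathcal{A}_m$ for every $m$ with $2^{m+1} \geq n+2$. One then revisits Proposition \ref{prop:weakinter}: its proof combines the weak-damping estimate of Lemma \ref{lem:weakdampgen} with the generator lower bound on $\CE_q$, and since Lemma \ref{lem:EqGauss} supplies that bound with $\underline{b}$ in place of $b$, the argument delivers $\liminf_{p\to\infty}\liminf_{t\to\infty}\log\E\|X_t\|^{2p}/(p\log p) \geq 2 - 2^{-m}$ for each admissible $m$, and hence $\geq 2$, establishing the at-least-exponential tail claimed by the corollary.

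The main obstacle is verifying that Proposition \ref{prop:weakinter}'s proof is robust enough to tolerate the substitution of $\underline{b}$ for $b$. A priori the original scalar proof could exploit trajectory-level sign information of $b(u_t)$ or features specific to a one-dimensional coefficient, in which case the substitution would demand genuine work. What must be confirmed is that the proof touches $b$ only through two channels: the generator inequality on $\CE_q$, which is handled by Lemma \ref{lem:EqGauss}, and the construction of the auxiliary functions $g_k$ witnessing membership in $\mathcal{A}_m$, which is handled by Lemma \ref{lem:dissi} applied to $\underline{b}$. The alignment condition and uniform bounds in Definition \ref{defn:Am} transfer without loss because they are built constructively from the polynomial bound $\underline{b} \leq C\|u - u_*\|^{2^{m+1}-2}$ and the dissipativity \eqref{eqn:dissipative}, both of which are in the hypothesis. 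Once this transparency is checked the rest is mechanical.
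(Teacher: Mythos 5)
Your plan for part 1 is essentially the paper's: apply Lemma \ref{lem:EqGauss} to get a generator inequality for $\CE_q$ with $\underline{b}$ in place of $b$, run Lemma \ref{lem:unstablelower} on $(\underline{b},h)$ to build $g=\exp(q\eta)$, and feed the pair into Lemmas \ref{lem:lyap} and \ref{lem:gronwall}. That part is correct (modulo the standing $\Sigma_X\succ 0$ controllability hypothesis implicit in Lemma \ref{lem:gronwall}'s converse, which the paper's statement also elides).

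Part 2, however, rests on a factual misstatement. You assert that Proposition \ref{prop:weakinter}'s proof ``combines the weak-damping estimate of Lemma \ref{lem:weakdampgen} with the generator lower bound on $\CE_q$,'' and that substituting $\underline{b}$ for $b$ is therefore handled by Lemma \ref{lem:EqGauss}. That is not how that proof works: it never applies the generator to $\CE_q$. It uses Duhamel's formula $X_t=A_{0,t}X_0+\sigma_x\int_0^tA_{s,t}\rmd W_s$ with the \emph{scalar} propagator $A_{s,t}=\exp(-\int_s^tb(u_r)\rmd r)$, reduces $\E\|X_t\|^{2p}$ to $\E(\int_0^tA_{s,t}^2\rmd s)^p$, and then invokes Lemma \ref{lem:weakdampgen}. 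In the multivariate setting the propagator $\Phi(t,s)$ is a matrix-valued time-ordered exponential, and one must separately show that the conditional covariance $\Sigma(t)=\int_0^t\Phi(t,s)\Sigma_X\Sigma_X^T\Phi(t,s)^T\rmd s$ dominates the scalar object, e.g.\ via $\tfrac{\rmd}{\rmd t}\mathrm{tr}\,\Sigma(t)=-2\,\mathrm{tr}(B(u_t)\Sigma(t))+\|\Sigma_X\|_F^2\geq -2\underline{b}(u_t)\,\mathrm{tr}\,\Sigma(t)+\|\Sigma_X\|_F^2$, which uses the quadratic-form inequality $\langle B(u)x,x\rangle\leq\underline{b}(u)\|x\|^2$ directly (the same inequality underlying Lemma \ref{lem:EqGauss}, but applied to the Lyapunov ODE for $\Sigma(t)$, not to $\mathcal{L}\CE_q$). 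This is the step that actually certifies the transfer; your proposal skips it, and the ``transparency'' check you flag as the main obstacle is precisely where it fails. So the route is fixable, but as written the argument for part 2 has a genuine gap: Proposition \ref{prop:weakinter} does not touch $b$ ``only through the generator,'' and the matrix-to-scalar comparison of the propagator needs to be supplied.
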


\section{Conclusion}
Extreme events are happening more often due to the global climate change, and they can induce heavy economic losses.
The capability to analyze and predict them is crucial for our society.  
Mathematically, they appear as strong anomalies in time series and form heavy tails in the histograms.
They are typically associated with stochastic instability caused by hidden unresolved processes.
Such instability can be modeled by stochastic dampings in conditional Gaussian models. 
This has been justified by extensive numerical experiments, while there is little theoretical understanding.
This can be problematic, since extreme events can be difficult to simulate. 

This paper closes this gap by creating a theoretical framework, 
in which  the tail density of conditional Gaussian models can be rigorously determined.
Theorem \ref{thm:unstable} shows that if the stochastic damping takes negative values, the tail is polynomial.
Theorem \ref{thm:exponential} shows that if the stochastic damping is nonnegative but takes value zero at certain points,  the tail is between exponential and Gaussian. 
These results can be generalized  to multivariate conditional Gaussian systems \eqref{sys:condGauss}, as long as certain surrogate damping rates follow the conditions in Theorems \ref{thm:unstable} and \ref{thm:exponential}. 
Moreover, we can apply the same framework  to obtain a large deviation bound for long time averaging processes. This is shown in Corollary \ref{cor:LDP}.

\section*{Acknowledgement}
The authors thank Ramon van Handel and Nan Chen for the discussion of certain parts of this paper. 
The authors also thank the anonymous referees for their extensive and detailed suggestions.
This research of A. J. M. is partially supported by the Office of Naval Research through MURI N00014-16-1-2161 and DARPA through W911NF-15-1-0636. This research of X. T. T. is supported by the National University of Singapore grant R-146-000-226-133.

\appendix

\section*{Appendix}
\setcounter{section}{1}
We allocate most of the technical verifications in this appendix.
\subsection{Some useful tools}
\begin{proof}[Proof of Lemma \ref{lem:Eq}]
The upper bound for $\CE_p$ is trivial. For the lower bound, note that if $\|x\|\leq 1$, then $\|x\|^p\leq 1$; if $\|x\|\geq 1$, then $\|x\|^{p+2}\geq \|x\|^p$. So $\|x\|^{p+2}+1\geq \|x\|^{p}$ always holds. Therefore
\[
2\CE_p-1=\frac{2\|x\|^{p+2}+\|x\|^2+1}{\|x\|^2+1}\geq \frac{\|x\|^{p+2}+(\|x\|^{p+2}+1)}{\|x\|^2+1}\geq \frac{\|x\|^{p+2}+\|x\|^p}{\|x\|^2+1}= \|x\|^p. 
\] 
The gradient and Hessian of $\CE_p$ can be computed directly:
\[
\nabla_x \CE_p= \frac{p\|x\|^{p}x}{1+\|x\|^2}+\frac{2\|x\|^{p}x}{(1+\|x\|^2)^2}.
\]
\[
\nabla^2_x\CE_p=\frac{p^2\|x\|^{p-2}xx^t}{1+\|x\|^2}+\frac{p\|x\|^{p}I}{1+\|x\|^2}-\frac{2p\|x\|^{p} xx^t}{(1+\|x\|^2)^2}+\frac{2p\|x\|^{p-2}xx^t}{(1+\|x\|^2)^2}+
\frac{2\|x\|^{p}I}{(1+\|x\|^2)^2}
-\frac{8\|x\|^{p} xx^t}{(1+\|x\|^2)^3}.
\]
Apply the generator to $\CE_p(x,u)=\CE_p(x)$, 
\[
\mathcal{L}\CE_p(x,u)=-\langle \nabla_x \CE_p(x), b(u) x\rangle+\frac{1}{2}\sigma_x^2\text{tr}(\nabla^2_x \CE_p(x)).
\]
Note that 
\[
\langle \nabla_x \CE_p(x), b(u) x\rangle=
b(u)\left(\frac{p\|x\|^{p+2} }{1+\|x\|^2}+ \frac{2\|x\|^{p+2} }{(1+\|x\|^2)^2}\right),
\]
by Young's inequality, there is a constant $C_\delta$
\begin{align*}
-(pb(u)+\delta |b(u)|+&\tfrac12\delta) \CE_p(x)-\tfrac12C_\delta |b(u)|\\
&\leq -\langle \nabla_x \CE_p(x), b(u) x\rangle\leq -(pb(u)-\delta |b(u)|-\tfrac12\delta) \CE_p(x)+\tfrac12C_\delta |b(u)|.
\end{align*}
Lastly, by Young's inequality, we can further increase $C_\delta$ so that
\[
|\text{tr}(\nabla_x^2 \CE_p(x))| =p^2O((1+\|x\|)^{p-1})\leq  \delta \CE_p(x)+ C_\delta. 
\]
In combination, we have reached our claim. 
 \end{proof}

 \begin{proof}[Proof of Proposition \ref{prop:compare}]
By the Duhamel's formula, we can write
\[
X_t=A_{0,t}X_0+ \sigma_x \int^t_{0}A_{s,t}\rmd W_s, \quad A_{s,t}:=\exp\left(-\int^t_s b(u_r)dr\right). 
\]
\[
Y_t=B_{0,t}X_0+ \sigma_x \int^t_{0}B_{s,t}\rmd W_s, \quad B_{s,t}:=\exp\left(-\int^t_s b'(u_r)dr\right). 
\]
Under our condition, $A_{s,t}\leq B_{s,t}$ a.s.. 

Conditioned on the realization of the $u_s$ process and $X_0$,  $X_t$ has a Gaussian distribution with mean being $\mu_X$ and the covariance matrix being
$\Sigma_X I$, where $\mu_X=A_{0,t} X_0$ and $\Sigma_X=\sigma_x^2\int^t_{0} A^2_{s,t}\rmd s$. Then with $Z$ being an independent $\mathcal{N}(\mathbf{0},I)$, the following holds
\begin{align*}
\E_u \|X_t\|^{2p}&=\E_u \|\mu_X+\sqrt{\Sigma_X}Z\|^{2p}\\
&=\E_u \left(\|\mu_X\|^2+2\sqrt{\Sigma_X}\langle \mu_X,Z\rangle  +\Sigma_X\|Z\|^2\right)^{p}\\
&=\sum_{k=0}^p 2^kC^k_p\sum_{j=0}^{p-k} C_{p-k}^j\|\mu_X\|^{2p-2j-2k}\Sigma_X^{j+\frac12k}\E_u\langle \mu_X,Z\rangle^k\|Z\|^{2j}.
\end{align*}
Here $C_p^k$ denotes the combinatoric number of choosing $k$ out of $p$, and the  expectation  conditioned on the realization of the $u_s$ process and $X_0$ is written as $\E_u$. 

The same formula  applies to $\E_u \|Y_t\|^{2p}$ as well, except that $\mu_X$ and $\Sigma_X$ are replaced by
$\mu_Y=B_{0,t} X_0$ and $\Sigma_Y=\sigma_x^2\int^t_{0} B^2_{s,t}\rmd s$. Note that $\|\mu_X\|\leq \|\mu_Y\|$ and $\Sigma_X\leq \Sigma_Y$ for a.s. realization of $u_s$ and $X_0$. Also note that $\E_u\langle \mu_X,Z\rangle^k\|Z\|^{2j}$ is nonzero only when $k$ is even, and it depends on $\mu_X$ only through $\|\mu_X\|$ since the distribution of $Z$ is rotation free. Therefore $\E_u \|X_t\|^{2p}\leq \E_u \|Y_t\|^{2p}$ a.s., and our claim follows by taking total expectation. 
\end{proof}

 \begin{lem}
 \label{lem:OUiscontractive}
Consider a multivariate OU process $u_t=-\Gamma u_t \rmd t+\rmd B_t$, $u_0=u$, where $B_t$ is a Wiener process of the same dimension as $u_t$.
If the real parts of the eigenvalues of the constant matrix $\Gamma$  are all strictly positive, then $u_t$ is asymptotically contractive. 
 \end{lem}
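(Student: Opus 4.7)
The plan is to use synchronous coupling combined with the explicit solution formula for the OU process. For any two initial points $u, v$, let $u_t^u$ and $u_t^v$ denote the two solutions driven by the \emph{same} Brownian motion $B_t$. By variation of constants,
\[
u_t^u - u_t^v = e^{-\Gamma t}(u-v),
\]
since the stochastic integrals cancel exactly under the synchronous coupling. Consequently, if $\mu_t^u$ and $\mu_t^v$ denote the laws of $u_t^u$ and $u_t^v$, then the joint law $(u_t^u, u_t^v)$ is a coupling of $(P_t^u, P_t^v)$, and
\[
d(P_t^u, P_t^v) \leq \mathbb{E}\|u_t^u - u_t^v\| = \|e^{-\Gamma t}(u-v)\| \leq \|e^{-\Gamma t}\|\,\|u-v\|,
\]
where the first inequality is the standard dual/coupling characterization of Wasserstein-1.

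It then suffices to bound $\|e^{-\Gamma t}\|$. Let $\gamma_0 > 0$ be a number strictly smaller than the minimum of the real parts of the eigenvalues of $\Gamma$; such a $\gamma_0$ exists by hypothesis. I would invoke the standard fact (proved via the Jordan canonical form of $\Gamma$) that there exists a constant $C_{\gamma_0}$ such that
\[
\|e^{-\Gamma t}\| \leq C_{\gamma_0}\, e^{-\gamma_0 t}, \qquad \forall t \geq 0.
\]
Indeed, writing $\Gamma = S(D+N)S^{-1}$ with $D$ diagonal (containing the eigenvalues) and $N$ nilpotent commuting with $D$, the matrix exponential factors as $e^{-\Gamma t} = S e^{-Dt} e^{-Nt} S^{-1}$, and $\|e^{-Dt}\| \leq e^{-\gamma_0 t}$ absorbs the polynomial growth of $\|e^{-Nt}\|$ into the constant $C_{\gamma_0}$ for all $t \geq 0$ (after choosing $\gamma_0$ strictly below the minimum real part).

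Combining the two displays, we obtain
\[
d(P_t^u, P_t^v) \leq C_{\gamma_0}\, e^{-\gamma_0 t}\,\|u-v\|,
\]
which is exactly the asymptotic contractivity property in Definition \ref{defn:lip} with $C_\gamma = C_{\gamma_0}$ and $\gamma = \gamma_0$. There is no real obstacle here; the only mild subtlety is the possible non-normality of $\Gamma$, which prevents a naive $\|e^{-\Gamma t}\| \leq e^{-\gamma_0 t}$ bound and forces the introduction of the prefactor $C_{\gamma_0}$ via Jordan form (or equivalently via the Bauer–Fike type spectral resolution).
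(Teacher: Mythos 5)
Your proof is correct and uses the same synchronous coupling argument as the paper: drive both copies with the same Brownian motion so the difference solves a deterministic linear ODE, then bound $\|e^{-\Gamma t}\|$ by $C_{\gamma_0}e^{-\gamma_0 t}$ using the spectrum of $\Gamma$. The only difference is that you spell out the Jordan-form step explicitly, which the paper leaves implicit.
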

 \begin{proof}
 Consider $v_t=-\Gamma v_t\rmd t+\rmd B_t$, and $v_0=v$, where $B_t$ is the same as the one in the SDE of $u_t$. Then the distribution of $v_t$ is $P^v_t$. Yet
 \[
 d(u_t-v_t)=-\Gamma(u_t-v_t) \rmd t\quad \Rightarrow\quad  \|u_t-v_t\|\leq \|\exp(-\Gamma t) \| \|u-v\|.
 \]
 In other words $d(P^u_t, P^v_t)\leq \|\exp(-\Gamma t) \| \|u-v\|$. Because all eigenvalues of $\Gamma$ have positive real parts, so $u_t$ is asymptotically contractive. 
 \end{proof}
 
 \begin{lem}
\label{lem:cauchy}
Suppose $u_t$ is asymptotically contractive. For any Lipschitz $\psi$ 
\[
\theta(u)=-\int^\infty_0 (\mathbb{E}^u \psi(u_t)-\langle \pi, \psi\rangle) \rmd t,
\]
is well defined, and $\mathcal{L} \theta=\psi-\langle \pi, \psi\rangle$.
\end{lem}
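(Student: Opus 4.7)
The plan is to proceed in two stages: first establish that the defining integral converges and produces a Lipschitz $\theta$, then differentiate through the Markov semigroup to identify $\mathcal{L}\theta$.

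For the first stage, I would exploit invariance of $\pi$ together with the asymptotic contraction. Since $\pi$ is invariant, for every $t\geq 0$ we have $\langle \pi,\psi\rangle=\int \E^v\psi(u_t)\,\pi(\rmd v)$, hence
\[
|\E^u\psi(u_t)-\langle \pi,\psi\rangle|\leq \|\psi\|_{Lip}\int d(P_t^u,P_t^v)\,\pi(\rmd v)\leq \|\psi\|_{Lip} C_\gamma e^{-\gamma t}\int \|u-v\|\,\pi(\rmd v).
\]
The first moment of $\pi$ is finite because, iterating the asymptotic contraction from any fixed starting point, one obtains a Cauchy sequence in Wasserstein-$1$ whose limit $\pi$ lies within finite Wasserstein distance of a point mass. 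Thus the integrand decays exponentially in $t$, so $\theta$ is well-defined, and the same estimate delivers $\|\theta\|_{Lip}\leq C_\gamma\gamma^{-1}\|\psi\|_{Lip}$, which recovers the bound already exploited in the proof of Lemma \ref{lem:linear}.

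For the second stage, let $P_s$ denote the Markov semigroup, $P_s f(u):=\E^u f(u_s)$. By the Markov property and Fubini's theorem,
\[
P_s\theta(u)=-\int_0^\infty\bigl[P_{t+s}\psi(u)-\langle\pi,\psi\rangle\bigr]\rmd t=-\int_s^\infty\bigl[P_r\psi(u)-\langle\pi,\psi\rangle\bigr]\rmd r=\theta(u)+\int_0^s\bigl[P_r\psi(u)-\langle\pi,\psi\rangle\bigr]\rmd r.
\]
Continuity of $r\mapsto P_r\psi(u)$ at $r=0$ and the fundamental theorem of calculus then yield
\[
\lim_{s\downarrow 0}\frac{P_s\theta(u)-\theta(u)}{s}=\psi(u)-\langle\pi,\psi\rangle,
\]
which is the desired generator identity $\mathcal{L}\theta=\psi-\langle\pi,\psi\rangle$.

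The anticipated obstacle is reconciling this semigroup-level identity with the explicit second-order differential operator form of $\mathcal{L}$ invoked elsewhere in the paper, since the hypothesis only furnishes a Lipschitz $\psi$ and does not directly guarantee $\theta\in C^2$. I expect to resolve this one of two ways: either impose mild hypoellipticity on $u_t$ so that $P_t$ regularises Lipschitz functions into $C^2$, in which case $\theta$ inherits $C^2$ smoothness and the semigroup and differential interpretations of $\mathcal{L}\theta$ coincide; or work at the level of Dynkin's formula, observing that the identity above is equivalent to $\E^u\theta(u_t)=\theta(u)+\E^u\int_0^t[\psi(u_s)-\langle\pi,\psi\rangle]\rmd s$, which is the only form needed for the downstream applications in Section \ref{sec:poly}.
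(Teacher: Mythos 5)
Your proof is correct, and the two stages split naturally into ``essentially the same as the paper'' and ``a genuinely different route.''

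For well-definedness, you and the paper use the same essential idea: exploit invariance of $\pi$ to write $\langle \pi,\psi\rangle$ as an integral of $\E^v\psi(u_t)$, then apply the Wasserstein-$1$ contraction and the Lipschitz bound on $\psi$. The paper phrases this via a coupling with an independent copy of $u$ started from $\pi$, and bounds the integrand by $C_\gamma e^{-\gamma t}\|\psi\|_{Lip}\,d(\delta_u,\pi)$; your version integrates the contraction bound against $\pi(\rmd v)$ directly, which is literally the duality form of the same Wasserstein estimate. Your aside about why $\pi$ has a finite first moment is a reasonable sanity check that the paper leaves implicit.

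For the generator identity, your route is genuinely different. The paper writes $\theta$ as a $t$-integral against the transition density $p_t^u(z)$, pulls $\mathcal{L}$ (acting on the $u$-variable) inside the integral by Fubini, invokes the Kolmogorov backward equation $\partial_t p_t^u = \mathcal{L}p_t^u$, and then integrates the resulting exact time derivative from $0$ to $\infty$, using $P^u_\infty=\pi$. You instead work purely at the semigroup level: the Chapman--Kolmogorov identity and a change of variable give
\[
P_s\theta(u)-\theta(u)=\int_0^s\bigl[P_r\psi(u)-\langle\pi,\psi\rangle\bigr]\rmd r,
\]
and differentiating at $s=0$ yields the claim. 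Your approach is more elementary in that it needs neither the existence of a smooth transition density nor the backward PDE; it uses only the semigroup property, Fubini, and continuity of $r\mapsto P_r\psi(u)$ at $r=0$, all of which follow from the Lipschitz bound already established. The paper's version, by contrast, is what you want if you insist on reading $\mathcal{L}$ literally as the second-order differential operator displayed earlier in the text, since it produces the formula pointwise in the differential sense (modulo regularity of $p_t^u$).

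The regularity concern you flag at the end is real, and worth noting that the paper's own proof does not escape it: applying $\mathcal{L}$ (as a differential operator) to $\theta$, and interchanging it with the $t$-integral, also presupposes enough smoothness of $\theta$ or of $p_t^u$ to justify differentiation under the integral sign. Your proposed resolutions are both appropriate. The hypoellipticity route (the driving noise in $\rmd u_t=h\,\rmd t+\rmd B_t$ is elliptic, so $P_t$ smooths Lipschitz functions instantly, and $\theta$ inherits $C^\infty$ regularity) reconciles the semigroup generator with the differential operator. The Dynkin-level route is equally legitimate and is indeed the only form that the downstream constructions in Lemma \ref{lem:lyap} and Lemma \ref{lem:gronwall} actually consume; this is a clean observation that the paper does not make explicit.
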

\begin{proof}
Let $u'_t$ be an independent copy of the SDE $\rmd u'_t=h(u'_t)\rmd t+\rmd B'_t$, where $u'_0$ follows the distribution $\pi$. Then the distribution of $u_t'$ is $\pi$ by invariance. 
We consider the joint distribution of $u'_t$ and $u_t$, where $u_0=u$. We write the expectation with respect to this joint distribution as $\E$. Then
\[
\mathbb{E}^u \psi(u_t)-\langle \pi, \psi\rangle=\E \psi(u_t)-\psi(u'_t). 
\]
By the asymptotic contractiveness, we have 
\[
|\E \psi(u_t)-\psi(u'_t)|\leq \|\psi\|_{Lip}d(P_t^u, P_t^{\pi})\leq C_\gamma e^{-\gamma t} \|\psi\|_{Lip} d(\delta_u, \pi). 
\]
Therefore $\theta$ is well defined.  Next, note that
\[
\theta(u)=-\int^\infty_0 \rmd t \int \rmd z p^u_t (z) (\psi(z)-\langle \pi, \psi\rangle),
\]
where $p_t^u(z)$ is the density of $P^u_t$. Apply  Fubini's theorem, we have:
\[
\mathcal{L}\theta(u)= - \int  dz (\psi(z)-\langle \pi,\psi\rangle) \int^\infty_0 \rmd t\mathcal{L}p_t^u(z).
\]
Moreover by the Kolmogorov backward equation,
\[
\frac{\partial}{\partial t}p_t^u(z)=\mathcal{L}p^u_t(z). 
\]
Thus by $P^u_\infty=\pi$, 
\[
\mathcal{L}\theta(u)=\int dz(\psi(z)-\langle \pi,\psi\rangle) (p_0^u(z)-p_\infty^u(z))=\psi(u)-\langle \pi,\psi\rangle. 
\]
\end{proof}

 \subsection{Polynomial tails}
  \begin{proof}[Proof of Lemma \ref{lem:gronwall}]
 First we derive the upper bound. Consider the temporally inflated version of $V$, $\widetilde{V}(x,u,t):=e^{\rho t} V(x,u)$, then by Lemma \ref{lem:lyap}
\[
\mathcal{L}\widetilde{V}(x,u,t)=e^{\rho t}\mathcal{L}V(x,u)+\rho e^{\rho t} V(x,u) \leq C_\delta(1+|b(u)|)e^{\rho t} f(u). 
\]
Here we extend the definition of generator $\mathcal{L}$ so the underlying process is $(X_t, u_t, t)$. 
Thus by Dynkin's formula:
\[
\mathbb{E}\widetilde{V}(X_t,u_t, t)=\mathbb{E}\widetilde{V}(X_0,u_0, 0)+\mathbb{E}\int^t_0 \mathcal{L}\widetilde{V}(X_s,u_s, s)\rmd s
\leq \mathbb{E}V(X_0,u_0)+C_\delta \int^t_0 e^{\rho  s} \mathbb{E} (1+|b(u_s)|)f(u_s)\rmd s. 
\]
By our assumption on $f$,  
\[
\E  f(u_t) \CE_q(X_t)=\E  V(X_t,u_t)=e^{-\rho t} \mathbb{E}\widetilde{V}(X_t,u_t, t),
\]
 is bounded by $C_\delta M_0/\rho$ when $t\to \infty$.  In order to remove  the $f(u_t)$ inside the expectation,  we  apply H\"{o}lder's inequality. For any $p<q$, when $t\to \infty$, 
\begin{align*}
\E \|X_t\|^p\leq \E [2\CE_q (X_t)]^{\frac pq}&\leq 2\bigg[\E f(u_t)\CE_q(X_t)\bigg]^{\frac{p}{q}}\bigg[\mathbb{E} f(u_t)^{\frac{-p}{q-p}}\bigg]^{\frac{q-p}{q}}\leq \frac2\rho C_\delta M_0 M^{\frac pq}_{\frac{q-p}{p}}.  
\end{align*}

To prove the converse direction, let $\widetilde{U}(x,u,t):=e^{-\rho t} U(x,u)$, then
\[
\mathcal{L}\widetilde{U}(x,u,t)\geq -C_\delta e^{-\rho t} g(u)(1+|b(u)|). 
\]
Thus by Dynkin's formula:
\[
\E U(X_t,u_t)=e^{\rho t} \mathbb{E}\widetilde{U}(X_t,u_t, t)\geq e^{\rho t}\mathbb{E}U(X_0,u_0)-C_\delta \int^t_0 e^{\rho(t- s)} \mathbb{E} g(u_s)(1+|b(u_s)|)\rmd s. 
\]
Note that by ergodicity and $g\leq 1$, 
\[
\limsup_{t\to \infty} \E (1+|b(u_t)|)g(u_t)\leq\limsup_{t\to \infty} \E (1+|b(u_t)|)=M_0. 
\]
So if $\mathbb{E}U(X_0,u_0)>C_\delta M_0/\rho $, then $\E U(X_t,u_t)\to \infty$ as $t\to \infty$. 

We can generalize this using the  Markov property:
\begin{align*}
\mathbb{E}U(X_{t_0+t},u_{t_0+t})&=\mathbb{E}\mathbb{E}^{X_{t_0},u_{t_0}}U(X_{t},u_{t})\\
&\geq \mathbb{E}1_{U(X_{t_0},u_{t_0})>C_\delta M_0/\rho}\mathbb{E}^{X_{t_0},u_{t_0}}U(X_{t},u_{t})
\end{align*}
which goes to $\infty$ as $t\to \infty$ if $\Prob(U(X_{t_0},u_{t_0})>C_\delta M_0/\rho)>0$. Yet when $\sigma_x>0$, system \eqref{sys:dyd} is controllable, so  given any $\epsilon$-ball $\mathcal{B}$ centered at any point $(x',u')$, $\Prob(X_{t_0},u_{t_0}\in \mathcal{B})>0$ \cite{MSH02}. Then since $U=\CE_q g$, so $\Prob(U(X_{t_0},u_{t_0})>C_\delta M_0/\rho)>0$. 

Lastly, we note that because $g\leq 1$, for any $p>q$,
\begin{align*}
\mathbb{E}U(X_{t},u_{t})=\E g(u_t)\CE_q(X_t)\leq \E  \CE_q(X_t)\leq \E \|X_t\|^q+1\leq 
\E \|X_t\|^p+2.
\end{align*}
This gives us $\E \|X_t\|^p\to \infty$. 
\end{proof}

\begin{proof}[Proof of Theorem \ref{thm:unstable}]
By Lemmas \ref{lem:unstablelower} and \ref{lem:linear}, there are  functions  $g$ and $f$ that satisfy \eqref{eqn:lyaplow} and \eqref{eqn:lyapup}, while $\CE_q$ in Lemma \ref{lem:Eq} satisfies the conditions of Lemma \ref{lem:lyap}.  Note also that $g=\exp(q\eta)\leq 1$ and $f=\exp(q\theta)$ with $\theta$ being Lipschitz as shown by Lemma \ref{lem:linear}. In below we show that $\CE(u)=\exp(\alpha \|u\|^2)$ is a Lyapunov function, so $\limsup_{t\to\infty}\E \CE(u_t)<\infty$, and because $\theta$ has at most linear growth, so by Young's inequality for a proper constant $M$, $\CE(u)+M\geq \exp(q\theta(u))=f(u)$.  So Lemmas \ref{lem:lyap} and \ref{lem:gronwall} apply, which provide us the claim about the moments. 

We just need to show the ergodicity part. According to the arguments in \cite{MSH02}, we only need to construct a Lyapunov function for $(X_t, u_t)$.  Note that by Lipschitz condition and Lemma \ref{lem:linear}, both $b$  and $\theta$ have at most linear growth. Therefore in Lemma \ref{lem:lyap}, for a certain constant $C_3$
\[
(1+|b(u)|) f(u)\leq  C_3 \exp(C_3 \|u\| ).
\]
Recall that $\langle h(u), u\rangle \leq -\lambda \|u\|^2+M_\lambda$ for some $\lambda,M_\lambda>0$.

 Let $\CE(u)=\exp(\alpha \|u\|^2)$, with an $\alpha<\lambda$. Apply the generator, 
\begin{align*}
\mathcal{L}\CE=2\alpha  \langle h(u), u\rangle \CE + (\alpha d_u+2\alpha^2\|u\|^2)\CE\leq (-2\epsilon \|u\|^2   + d_u+2M_\lambda)\alpha\CE.
\end{align*}
Here $\epsilon=\lambda- \alpha>0$. 
 When $\epsilon\|u\|^2 \leq 2d_u+4M_\lambda$, $\CE(x)\leq \exp(2\alpha (d_u+2M_\lambda)/\epsilon)$, otherwise $\mathcal{L}\CE_t\leq -\epsilon  \CE_t$. Therefore for some constant $M_4$,
\[
\mathcal{L}\CE\leq -\epsilon  \CE+ \alpha (d_u+2M_\lambda)  \exp(2\alpha (d_u+2M_\lambda)/\epsilon)=:-\epsilon  \CE +M_4. 
\]
We can find another constant $M_5$, so that
\[
(1+|b(u)|) f(u)\leq  C_3\exp(C_3 \|u\| )\leq \frac12\epsilon  M_5\exp(\alpha \|u\|^2).
\]
Let $\widetilde{V}(x,u)=f(u) \CE_q(x)+ M_5 \CE(u)$, then by Lemma \ref{lem:lyap}, 
\begin{align*}
\mathcal{L}\widetilde{V}(x,u)&\leq -\rho f(u) \CE_q(x)+C_\delta(1+|b(u)|) f(u)-M_5\epsilon  \CE +M_4M_5\\
&\leq -\min\{\rho,\tfrac12 \epsilon   \}\widetilde{V}(x,u)+M_4M_5. 
\end{align*}
This qualifies $\widetilde{V}$ as a Lyapunov function for the process $(X_t,u_t)$. 
\end{proof}

\subsection{Exponential tails}
\label{sec:appendexp}

\begin{proof}[Proof of Proposition \ref{prop:upperbound}]
Given a vector $\alpha\in \reals^{d_X}$, let $\CE_t=\exp\langle \alpha, X_t\rangle$,  apply the chain rule \eqref{eqn:chain}, we find
\[
\mathcal{L}\CE_t=-\langle\alpha, X_t\rangle b(u_t)  \CE_t + \frac{1}{2} \CE_t\|\sigma_x \alpha\|^2. 
\]
Consider the function $H(x)=\exp(x)(\|\alpha\|-x)$. Its derivative is $\dot{H}(x)= \exp(x)(\|\alpha\|-x-1)$, so
$H(x)$ reaches its maximum $\exp(\|\alpha\|-1)$ at $x=\|\alpha\|-1$. Therefore 
\[
H(\langle\alpha, X_t\rangle)=\CE_t (\|\alpha\|- \langle\alpha, X_t\rangle)\leq \exp(\|\alpha\|-1),
\]
and by $b(u_t)\geq 0$
\[
- b(u_t) \langle\alpha, X_t\rangle \CE_t\leq \exp(\|\alpha\|-1)b(u_t)-\|\alpha\| \CE_t b(u_t).
\]
So
\[
\mathcal{L}\CE_t\leq (\tfrac12\|\sigma_x \alpha\|^2-\|\alpha\| b(u_t)) \CE_t+\exp(\|\alpha\|-1) b(u_t).
\]
Next, we apply the generator to $f=\exp(\|\alpha\|\theta)$, by the chain rule \eqref{eqn:chain} is,
\[
\mathcal{L}f=(\|\alpha\|(b-\langle \pi, b\rangle)+\tfrac12\|\alpha\|^2 \|\nabla_u \theta\|^2)f.
\]
We apply Lemma \ref{lem:lyap} first part to  $\CE_t$ and $f=\exp(\|\alpha\|\theta)$,
\[
\mathcal{L} \CE_t  f(u_t)\leq  (\tfrac12\|\alpha\|^2 \|\nabla_u \theta\|^2+\tfrac12 \sigma_x^2\| \alpha\|^2-\|\alpha\| \langle \pi, b\rangle) \CE_t  f(u_t)
+\exp(\|\alpha\|-1) b(u_t) f(u_t).
\]
Since $\langle \pi, b\rangle>0$, and $\|\nabla_u \theta\|$ is bounded, so if we choose $\alpha$ with sufficiently small norm such that
\[
\rho_\alpha=-\left(\tfrac12\|\alpha\|^2 \sup_u\|\nabla_u \theta\|^2+\tfrac12 \sigma_x^2\| \alpha\|^2-\|\alpha\| \langle \pi, b\rangle\right)>0. 
\]
Then by Gronwall's inequality, assuming $\|\alpha\|\leq 1$, 
\[
\E \CE_t  \exp(\|\alpha\| \theta(u_t))\leq e^{-\rho_\alpha t}\E \CE_0  \exp(\|\alpha\| \theta(u_0))+\int^t_0 e^{-\rho_\alpha (t-s)}\E b(u_s)f(u_s)\rmd s<\infty.
\]
Finally by H\"{o}lder's inequality, for any $\rho<1$
\[
\limsup_{t\to\infty}\E \exp(\rho \langle\alpha, X_t\rangle) \leq \limsup_{t\to\infty} [\E \CE_t \exp(\|\alpha\| \theta(u_t))]^\rho  [\E  \exp(-\rho (1-\rho)^{-1}\|\alpha\| \theta(u_t))]^{1-\rho}<\infty. 
\]
To get our claim in the proposition, one simply lets $\alpha=\rho^{-1}\beta$, with a proper $\rho<1$ so that $\rho_\alpha>0$. 

For the last claim,  note that if $\|\cdot\|_\infty$ denotes the $l_\infty$ norm, then for any positive $a$, 
\[
\E \exp(a\|X_t\|)\leq \E \exp(a\sqrt{d_X}\|X_t\|_\infty)\leq \sum_{i=1}^{d_X}\E\exp(a\sqrt{d_X} \langle e_i, X_t\rangle) +\exp(-a\sqrt{d_X} \langle e_i, X_t\rangle). 
\]
Here $e_i$ is the $i$-th standard Euclidean basis vector, so $\langle e_i, X_t\rangle$ is the $i$-th component of $X_t$. So for sufficiently small $a$, $\limsup_{t\to\infty} \E \exp(a\|X_t\|)<\infty$. 

Finally note that by Taylor expansion of $\exp(a \|x\|)$, 
\[
\|x\|^{2p}\leq (2p)! a^{-2p} \exp(a\|x\|).
\] 
So we have our claim since
\[
\log \E \|X_t\|^{2p}\leq  \sum_{k=1}^{2p}\log k -2p\log a+\log \E \exp(a\|X_t\|)\leq 2p\log p+O(p). 
\]
\end{proof}

\begin{proof}[Proof of Lemma \ref{lem:weakdampgen}]
Consider the the following process
\[
U_{p,t}=\exp\left(\int^t_0 (-p^{\frac{1}{2^m}}\mathcal{L}g_m(u_s)-p\Gamma(g_1)(u_s))\rmd s + G_p(u_t) \right).
\]
Apply the generator to the $u_t$ part, we find that 
\begin{align*}
\mathcal{L}U_{p,t}&=(\mathcal{L} G_p+\Gamma(G_p)-p^{\frac{1}{2^m}}\mathcal{L}g_m-p\Gamma(g_1))U_{p,t}\\
&=\left(\sum_{k=1}^{m-1}p^{\frac{1}{2^k}}\left( \mathcal{L}g_k+\Gamma(g_{k+1})\right)+\sum_{k\neq j} p^{\frac{1}{2^j}+\frac{1}{2^k}}\Gamma(g_j,g_k)\right) U_{p,t}\geq 0.
\end{align*}
By Dynkin's formula, $U_{p,t}$ is a submartingale
\[
\mathbb{E} U_{p,t}\geq \mathbb{E}\exp(G_p(u_0)). 
\]
Moreover, note that
\begin{align*}
\exp\left(-p\int^t_0 b(u_s)\rmd s+G_p(u_t)\right)&\geq \exp\left(-p\int^t_0\Gamma (g_1)(u_s) \rmd s+G_p(u_t)\right)\\
&=U_{p,t}\exp\left(\int^t_0 p^{\frac{1}{2^m}}\mathcal{L}g_m(u_s) \rmd s\right)\geq \exp(-p^{\frac{1}{2^m}}M t)U_{p,t}. 
\end{align*}
And by Cauchy  Schwarz
\[
\mathbb{E} \exp\left(-2p\int^t_0 b(u_s)\rmd s\right)\mathbb{E}\exp(2G_{p}(u_t))\geq \left(\mathbb{E}\exp\left(-p\int^t_0 b(u_s)\rmd s+G_p(u_t)\right)\right)^2.
\]
As a consequence
\[
\mathbb{E} \exp\left(-2p\int^t_0 b(u_s)\rmd s\right)
\geq \frac{\exp(-2 p^{\frac{1}{2^m}}M t)  \left(\mathbb{E}\exp(G_p(u_0))\right)^2}{\mathbb{E}\exp(2G_{p}(u_t))}.
\]
Then  by  Jensen's inequality,  $\mathbb{E}\exp(G_p(u_0))\geq \exp(\mathbb{E}G_p(u_0))\geq \exp(- \sqrt{p}M_1)$, moreover
$\mathbb{E}\exp(2G_{p}(u_t))\leq \exp(2\sqrt{p}M_0)$. Therefore
\[
\mathbb{E} \exp\left(-2p\int^t_0 b(u_s)\rmd s\right)\geq \exp(-2 p^{\frac{1}{2^m}}M t-2\sqrt{p}M_0-2\sqrt{p}M_1). 
\]
\end{proof}

\begin{proof}[Proof of Proposition \ref{prop:weakinter}]
We will only look at integer $p$. For non-integer $p$, one can get similar bounds using H\"{o}lder's inequality. By the Duhamel's formula, we can write
\[
X_t=A_{0,t}X_0+ \sigma_x \int^t_{0}A_{s,t}\rmd W_s, \quad A_{s,t}:=\exp\left(-\int^t_s b(u_r)dr\right). 
\]
Conditioned on the realization of $u_s$, $A_{0,t}X_0$ and $\int^t_{0}A_{s,t}\rmd W_s$ are independent, so the conditional expectation of $\|X_t\|^{2p}$ will be larger than the conditional expectation of $\| \sigma_x \int^t_{0}A_{s,t}\rmd W_s\|^{2p}$. So without loss of generality, we can assume $X_0=0$. 

Next, conditioned on the realization of the $u_s$ process, $X_t$ has a Gaussian distribution with mean being $\mathbf{0}$ and the covariance matrix being
$\sigma_x^2\left(\int^t_{0} A^2_{s,t}\rmd s\right) I$. Therefore
\[
\mathbb{E} \|X_t\|^{2p}=\sigma_x^{2p}\mathbb{E}\|Z\|^{2p}\mathbb{E}\left(\int^t_{0} A^2_{s,t}\rmd s\right)^p,
\]
where $Z$ is $\mathcal{N}(\mathbf{0},I)$. Note that  
\[
\mathbb{E}\left(\int^t_{0} A^2_{s,t}\rmd s\right)^p
=\int_{s_1,\ldots, s_p=0}^t\mathbb{E} A^2_{s_1,t}\cdots A^2_{s_p,t} \rmd s_1\cdots \rmd s_p,
\]
and because $A^2_{r,t}=A^2_{r,s}A^2_{s,t}\leq A^2_{s,t}$ for any $r\leq s\leq t$, therefore
\[
\mathbb{E}\left(\int^t_0 A^2_{s,t}\rmd s\right)^p
\geq \int_{s_1,\ldots, s_p=0}^t\mathbb{E} A^{2p}_{s_*,t}\rmd s_1\cdots \rmd s_p,
\]
where $s_*=\min\{s_1,\ldots, s_p\}$. Applying  Lemma \ref{lem:weakdampgen} with a time shift of $s$, we find that
\[
\mathbb{E} A^{2p}_{s_*,t}\geq \exp(-2p^{\frac{1}{2^m}}M(t-s_*)-2\sqrt{p}M_0-2\sqrt{p}M_1) .
\]
%Since we are doing expectation under the invariant measure, 
%\[
%\frac{\left(\mathbb{E}\exp( G_p(u_{s^*}))\right)^2}{\mathbb{E} \exp(2G_p u_t)}=\frac{\left(\mathbb{E}\exp( G_p(u_{0}))\right)^2}{\mathbb{E} \exp(2G_p u_0)}
%\]
Notice the volume inside $\{(s_1,\ldots, s_p): s_i\leq t\}$ corresponding to $s_*\geq s$ is $(t-s)^{p}$, so by a change of variable,
\begin{align*}
\mathbb{E}\left(\int^t_0 A^2_{s,t}\rmd s\right)^p&\geq \exp(-2\sqrt{p}M_0-2\sqrt{p}M_1)\int^t_{0}\exp(-2p^{\frac{1}{2^m}}M(t-s)) p(t-s)^{p-1} \rmd s\\
&=p\exp(-2\sqrt{p}M_0-2\sqrt{p}M_1)\int_{s=0}^t \exp(-2p^{\frac{1}{2^m}}Ms)s^{p-1}\rmd s.
\end{align*}
By Lemma \ref{lem:expmoment} in below, 
\[
\log \int_{s=0}^\infty \exp(-p^{\frac{1}{2^m}}Ms)s^{p-1}\rmd s= \left(1-\frac{1}{2^m}\right)p\log p+O(p). 
\] 
Using the inequality above, we find that
\[
\liminf_{t\to\infty}\log\mathbb{E}\|X_t\|^{2p}\geq \log \mathbb{E}\|Z\|^{2p}+ \left(1-\frac{1}{2^m}\right)p\log p+O(p)=\left(2-\frac{1}{2^m}\right)p\log p+O(p). 
\]
The $\log \mathbb{E}\|Z\|^{2p}=p\log p+O(p)$ can be obtained by standard Gaussian moment formula or using Lemma \ref{lem:expmoment}. 
\end{proof}

\begin{lem}
\label{lem:expmoment}
Fixed any $r>0$, then with any sequence $c_p=O(p)$, the following holds
\[
\log \left(\int^\infty_0 \exp(-c_px^r) x^p \rmd x \right)=\frac{p}{r}\log \frac{p}{c_p}+O(p). 
\]
Here a term is $O(p)$, if this term  is bounded by $[-Mp, Mp]$ for a constant $M$ independent of $p$. 
\end{lem}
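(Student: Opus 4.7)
The plan is to reduce the integral to a Gamma function and then apply Stirling's approximation. First I would make the substitution $y = c_p x^r$, so that $x = (y/c_p)^{1/r}$ and $\rmd x = (r c_p^{1/r})^{-1} y^{1/r - 1}\rmd y$. This yields the exact closed form
\[
\int^\infty_0 \exp(-c_p x^r) x^p \rmd x = \frac{1}{r}\, c_p^{-(p+1)/r}\, \Gamma\!\left(\frac{p+1}{r}\right),
\]
so that
\[
\log \int^\infty_0 \exp(-c_p x^r) x^p \rmd x = -\log r - \frac{p+1}{r}\log c_p + \log \Gamma\!\left(\frac{p+1}{r}\right).
\]

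Next I would apply Stirling's formula $\log \Gamma(z) = z\log z - z + O(\log z)$ with $z = (p+1)/r$. A one-line expansion then gives $\log \Gamma((p+1)/r) = \frac{p}{r}\log p + O(p)$, since the linear term $-(p+1)/r$, together with the discrepancies $\frac{p+1}{r}\log(p+1) - \frac{p}{r}\log p$ and $-\frac{p+1}{r}\log r$, are each of order $O(p)$. Substituting back and combining the $\log c_p$ and $\log p$ pieces yields
\[
\log \int^\infty_0 \exp(-c_p x^r) x^p \rmd x = \frac{p}{r}\log \frac{p}{c_p} + O(p),
\]
which is the desired identity.

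The only place the growth bound on $c_p$ enters is in checking that the stray $-\frac{1}{r}\log c_p$ term (coming from the exponent $p+1$ versus $p$) is absorbed into the $O(p)$ error: since $|c_p|\leq Mp$, one has $\log c_p = O(\log p) \subset O(p)$. I do not anticipate any genuine obstacle beyond this bookkeeping; the argument is essentially one substitution followed by standard Gamma-function asymptotics, with $c_p > 0$ tacitly assumed so that the integral converges.
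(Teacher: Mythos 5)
Your proof is correct and follows essentially the same route as the paper: both begin with the substitution $y=c_p x^r$ to reduce the integral to $\frac1r c_p^{-(p+1)/r}\Gamma\bigl(\tfrac{p+1}{r}\bigr)$, and both then extract $\frac{p}{r}\log p + O(p)$ from the Gamma factor. The one difference is that you invoke Stirling's formula $\log\Gamma(z)=z\log z - z + O(\log z)$ as a black box, whereas the paper reproves the needed asymptotic from scratch via integration by parts ($M_q = q M_{q-1}$) followed by the elementary bound $-\log 2 + \int_n^{n+1}\log u\,\rmd u \le \log z \le \log 2 + \int_n^{n+1}\log u\,\rmd u$; the paper's choice is deliberate because that same discrete log-sum estimate (labeled \eqref{tmp:logsum}) is reused directly in the proofs of Proposition \ref{prop:upperbound} and Theorem \ref{thm:Gaussiantail}. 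One small bookkeeping caveat, present in both your argument and the paper's: absorbing the stray $-\frac1r\log c_p$ into $O(p)$ actually requires $|\log c_p|=O(p)$, i.e.\ $c_p$ bounded below as well as above; the stated hypothesis $c_p=O(p)$ only gives the upper bound, but in every application in the paper $c_p$ grows like a positive power of $p$, so this is harmless.
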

\begin{proof}
Applying the change of variable with $y=c_px^r$,  the integral can be written as
\[
\int^\infty_0 \exp(-c_px^r) x^p \rmd x=\frac{1}{r}c_p^{-\frac{p+1}{r}}\int^\infty_0\exp(-y) y^{\frac{p+1}{r}-1}\rmd y. 
\]
Let $q=\frac{p+1}{r}-1$, and denote
\[
M_q:=\int^\infty_0 \exp(-y)y^q \rmd y.
\]
Using integration by parts, we find that 
\[
M_q=q\int^\infty_0 \exp(-y)y^{q-1} \rmd y=q M_{q-1}=\cdots=\left(\prod_{k=0}^{\lfloor q\rfloor-1} (q-k)\right) M_{q-\lfloor q\rfloor}.
\]
Since $q-\lfloor q\rfloor\in[0,1)$, $M_{q-\lfloor q\rfloor}$ is bounded by constants from both below and above. Moreover, note that
\[
\log \left(\prod_{k=0}^{\lfloor q\rfloor-1} (q-k)\right)=\sum_{k=0}^{\lfloor q\rfloor-1} \log (q-k).
\]
For $u,z\in [n,n+1]$, $\frac12u\leq z\leq 2u$, so $-\log 2+\log u\leq \log z\leq \log2 +\log u$, 
\begin{equation}
\label{tmp:logsum}
-\log2+\int^{n+1}_n \log u   \rmd u\leq \log z \leq \log2+ \int^{n+1}_n \log u  \rmd u.
\end{equation}
Combining these inequalities, and returning to the formulation of $M_q$, we can conclude that
\[
\log M_q = \int^q_1 \log u+O(q)=q\log q+O(q)=\frac{p}{r}\log p+O(p). 
\]
Then our claim holds as long as  
\[
\log \left(c_p^{-\frac{p+1}{r}}\right)=-\frac{p}{r}\log c_p+O(p).
\]
Our assumption on $c_p$ guarantees this.
\end{proof}

\begin{proof}[Proof of Lemma \ref{lem:dissi}]
Without loss of generality, we will assume $u_*$ is the origin. 
We will use 
\[
g_1(u)=-M_1 \|u\|^{2^{m}}, \quad M_1:= \frac{\sqrt{C}}{2^m}. 
\]
It is easy to see that 
\begin{equation}
\label{tmp:g1}
\nabla_u g_1=-2^m M_1 \|u\|^{2^m-2} u,\quad \nabla^2_u g_1= -2^m M_1 \|u\|^{2^m-2} I_d- 2^m (2^m-2)M_1 \|u\|^{2^m-4} uu^T. 
\end{equation}
The level-1 constraint is met because
\[
\Gamma(g_1)=2^{2m} M_1^2 \|u\|^{2^{m+1}-2}\geq b(u). 
\]
Next, we notice that,
\begin{align}
\notag
\mathcal{L} g_1&=-2^m M_1 \|u\|^{2^m-2} \langle u, h(u)\rangle +\frac12\text{tr}( \nabla_u^2 g_1)\\
\notag
&=-2^m M_1 \|u\|^{2^m-2} \langle u, h(u)\rangle-\frac12 2^m(2^m-2+d_u) M_1 \|u\|^{2^m-2}\\
\label{tmp:Lg1}
&\geq \lambda 2^m M_1 \|u\|^{2^m }-2^{m-1} (2^m+d_u+2M_\lambda) M_1 \|u\|^{2^m-2}. 
\end{align}
So if we let 
\[
C_1=2^{m-1}(2^m+2M_\lambda) M_1,\quad g_2(u)=-M_2 \|u\|^{2^{m-1}}, \quad M_2:= \sqrt{\frac{C_1}{2^{m}}},
\]
 the gradient and Hessian of $g_2$ are similar to \eqref{tmp:g1}. In particular, it solves the level-2 constraint with $g_1$ since,
\[
\Gamma(g_2)\geq  2^m M_2^2  \|u\|^{2^{m}-2}=C_1 \|u\|^{2^m-2}. 
\]
And a similar lower bound as \eqref{tmp:Lg1} holds for $\mathcal{L} g_2$ with a $C_2>0$:
\[
\mathcal{L} g_2\geq \lambda 2^{m-1} M_1 \|u\|^{2^{m-1} }-C_2 \|u\|^{2^{m-1}-2}. 
\]
Clearly we can iterate this construction, and obtain a series of $g_k(u)=-M_k \|u\|^{2^{m+1-k}}, k=1,\cdots,m$, while
\[
\mathcal{L}g_{k-1}+\Gamma(g_k)\geq 0.
\]
On the other hand, we can verify that for $g_m(u)=-M_m\|u\|^2$,
\[
\mathcal{L} g_m=-2 M_m \langle u, h(u)\rangle-M_m d_u \geq 2\lambda M_m \|u\|^2 -2M_m M_\lambda -M_m d_u\geq  -2M_m M_\lambda -M_m d_u.\\
\]
Finally, we check conditions (5) and (6) in Definition \ref{defn:Am}. The first part of (5) holds as $G_p\leq 0$, and the second part holds since the power of $p$ in $G_p$ is at most $\frac12$. 

As for the alignment condition (6),  note that 
\[
\nabla_u g_k=-2^{m+1-k} M_k \|u\|^{2^{m-1-k}} u,
\]
so $\Gamma(g_j,g_k)\geq 0$.
\end{proof}

\begin{proof}[Proof of Theorem \ref{thm:exponential}]
We will first consider a process $Y_t$ defined by 
\[
\rmd Y_t=-b'(u_t)Y_t\rmd t+\sigma_x \rmd W_t,\quad Y_0=X_0.
\]
By Proposition \ref{prop:compare}, we know $\E \|Y_t\|^{2p}\geq \E \|X_t\|^{2p}$. So it suffices to prove the same upper bound for $\E \|Y_t\|^{2p}$.
Since $b'$ is Lipschitz, by Lemma \ref{lem:cauchy}, we know that $\theta$ with $\btilde=b'-q^{-1}\delta |b'|$ is well defined and Lipschitz, so $\Gamma(\theta)=\frac12\|\nabla_u \theta\|^2$ is bounded. 
This indicates that $\exp(C\theta(u))\leq \exp(CM\|u\|+CM)$ for a constant $M$. Then by Theorem \ref{thm:Gaussiantail}, and the fact the energy is dissipative, we have $\E \exp(CM\|u_t\|+CM)$ is bounded uniformly in time. Therefore, we can use Proposition \ref{prop:upperbound} to find the upper tail.

For the other direction, Corollary \ref{cor:dissi} indicates that $(b,h)\in \mathcal{A}_m$ for any $m$. So Proposition \ref{prop:weakinter} indicates the lower bound. 
\end{proof}

\subsection{Verification of the univariate case}
\label{sec:appenduni}
\begin{proof}[Proof of Theorem \ref{thm:simple}]
First of all, Theorem \ref{thm:unstable}-3) verifies that \eqref{sys:1dim} is geometrically ergodic. Next we verify the claims on the tail of $|X_t|$, considering different scenarios.

In the scenario of i), if $b$ takes negative value at $u_*$, $\E b(u_t)>0$,  and it is Lipschitz, the conditions of  Theorem \ref{thm:unstable}-1) and 2)  are met. So 
\[
p_0:=\inf\{p>0: \lim_{t\to \infty} \E \|X_t\|^p=\infty\}
\]
is well defined and strictly positive. This shows $|X_t|$ is polynomial like. 

In the scenario of ii), Lemma \ref{lem:OUiscontractive} shows that $u_t$ is asymptotically contractive. Also if we pick $u_*$ the midpoint of the interval where $b$ takes value $0$, the conditions of Theorem \ref{thm:exponential} are satisfied, therefore $|X_t|$ has an exponential-like tail. 

The scenario of iii) is similar to ii), except that there is not an interval for $b$ to take value $0$. But the conditions of Proposition \ref{prop:upperbound} still hold by Theorem \ref{thm:Gaussiantail}, and the conditions of Proposition \ref{prop:weakinter} still hold by Corollary \ref{cor:dissi} with $n=1$ and the Lipschitz condition of $b$. 

The scenario of iv) satisfies the conditions listed by Theorem \ref{thm:Gaussiantail}. 
\end{proof}
\subsection{General conditional Gaussian models}
\begin{proof}[Proof of Lemma \ref{lem:EqGauss}]
Simply note that 
\begin{align*}
\langle B(u) x, x \rangle &= \frac12\langle(B(u)+B(u)^T) x, x \rangle\\
&=\frac12\sum_{i=1}^n\langle b_i(u)(B_i+B_i^T) x, x \rangle\\
&\leq \frac12\sum_{i=1}^n  b_i M_i \vee b_i m_i  \langle I_{N_i} x,x  \rangle\\
&\leq \sum_{i=1}^n  b_i M_i \vee b_i m_i \sum_{j\in N_i} x^2_j \leq \underline{b}(u)\|x\|^2. 
\end{align*}
Use the derivatives derived in Lemma \ref{lem:Eq} for $\CE_q$, apply Young's inequality, we find that  for any fixed $\delta>0$, there is a $C_\delta$
\begin{align*}
\mathcal{L} \CE_q (x) &=  - \left( \frac{q \|x\|^q}{1+\|x\|^2}+\frac{2\|x\|^q}{(1+\|x\|^2)^2}\right)\langle B(u) x, x \rangle +\frac12\text{tr}(\Sigma_X\nabla^2 \CE_q(x)\Sigma_X^T)\\
&\geq -q\underline{b}(u)\CE_q(x)-\underline{b}(u)O((1+\|x\|)^{q-2})-O((1+\|x\|)^{q-1})\\
&\geq -(q\underline{b}+\delta|\underline{b}|+\delta)\CE_q(x)-C_\delta (1+|\underline{b}|).
\end{align*}
The converse direction comes in very similarly.
\end{proof}

\bibliographystyle{plain}
\bibliography{ref}

\end{document}